\theoremstyle{thmstyleone}%
\newtheorem{theorem}{Theorem}%  meant for continuous numbers
\newtheorem{proposition}[theorem]{Proposition}% 
\newtheorem{example}{Example}%
\newtheorem{definition}{Definition}%
\newtheorem{assumption}{Assumption}
\newtheorem{alg}{Algorithm}
\newtheorem{lemma}{Lemma}
\newtheorem{condition}{Condition}
\begin{document}

\title[Article Title]{Distributed accelerated proximal gradient methods for multi-agent constrained optimization problem}

%%=============================================================%%
%% GivenName	-> \fnm{Joergen W.}
%% Particle	-> \spfx{van der} -> surname prefix
%% FamilyName	-> \sur{Ploeg}
%% Suffix	-> \sfx{IV}
%% \author*[1,2]{\fnm{Joergen W.} \spfx{van der} \sur{Ploeg} 
%%  \sfx{IV}}\email{iauthor@gmail.com}
%%=============================================================%%

\author[1]{\fnm{Anteneh Getachew} \sur{Gebrie}}
%\equalcont{These authors contributed equally to this work.}

\affil[1]{\orgdiv{Department of Mathematics}, \orgname{Debre Berhan University},   \state{Debre Berhan}, \country{Ethiopia}}

%%==================================%%
%% Sample for unstructured abstract %%
%%==================================%%

\abstract{The purpose of this paper is to introduce two new classes of accelerated distributed proximal gradient algorithms for multi-agent constrained optimization
problems; given as minimization of a function decomposed as a sum of $M$ number of smooth and $M$ number of non-smooth functions over the common fixed points of $M$ number of nonlinear mappings. Exploiting the special properties of the cost the component function of the objective function and the nonlinear mapping of the constraint problem of each agent, a new inertial accelerated incremental and parallel computing distributed algorithms will be presented based on the combinations of computations of proximal, gradient, and Halpern methods. Some numerical experiments and comparisons are given to illustrate our results.}

\keywords{Distributed system; Multi-agent; Constrained optimization problem; Proximal mapping; gradient method; Halpern iteration; Inertial extrapolation}

%%\pacs[JEL Classification]{D8, H51}

%%\pacs[MSC Classification]{35A01, 65L10, 65L12, 65L20, 65L70}

\maketitle

\section{Introduction}  
In last few decades, the theory and practice of distributed methods for optimization problems is increasingly popular and has shown significant advances. A broad range of problems arising in several fields of applications, for example, in sensor networks \cite{1a}, economic dispatch \cite{2a},  smart grid \cite{3a}, resource allocation \cite{4a}, and machine learning \cite{5a}, can be posed in the framework distributed mannered multi-agent constrained or unconstrained optimization problems. Multi-agent constrained optimization problem in general takes the form of 
\begin{subequations}\label{eq:prob12}
	\begin{eqnarray} 
	&&\mbox{minimize } \psi(x)=\sum\limits_{i=1}^{M}\mathcal{G}_{i}(x)\label{eq:prob1a}\\&&\hspace{1mm}\mbox{subject to } x\in\bigcap_{i=1}^{M} \{y\in H:y\text{ solves } \mathcal{P}_{i}\},\label{eq:prob1b}
	\end{eqnarray}
\end{subequations}
where  $\mathcal{G}_{i}:H\rightarrow\mathbb{R}\cup \{+\infty\}$ ($i\in I$ ) is a function,  $\mathcal{P}_{i}$ ($i\in I$ ) a problem installed in space $H$ ($\mathcal{P}_{i}$ is a constraint problem (subproblem) of the problem (\ref{eq:prob12}) installed in space $H$), $H$ is a real Hilbert space, and $I=\{1,\ldots,M\}$.

The main goal of studying the distributed methods for multi-agent constrained optimization problem of the form (\ref{eq:prob12}) is to cooperatively solve the problem using private information available to agent $i$, i.e., the local objective function $\mathcal{G}_{i}$ and the subproblem $\mathcal{P}_{i}$ determined by agent $i$. Distributed methods for solving the multi-agent constrained optimization problem (\ref{eq:prob12}) involve $M$ number of agents where each agent $i$ maintains an estimate of the solution of (\ref{eq:prob12}) and updates this estimate iteratively using his own private information and information exchanged with neighbors over the network, see for example \cite{6a,7a,8a,9a}. In fact all cost component functions $\mathcal{G}_{i}$ may not have the same property and the properties of each cost component function may not be well suited for the same iteration method, so it makes sense to consider combinations of two or more iteration methods, for example gradient or subgradient and proximal methods, see for example \cite{7,8,20,21,7a,8a,9a}. 

Proximal \cite{46} and gradient method \cite{29} have a long history with many contributors in the study of optimization problems in general, and are prized for their applicability to a broad range of nonsmooth and smooth convex optimizations and for their good theoretical convergence properties. The most famous and the well known proximal gradient method is the forward-backward splitting (FBS) \cite{33,34} for the unconstrained optimization problem with objective function given as a sum of two functions (sum of one nonsmooth and one smooth function).  Many engineers and mathematicians use the gradient method to solve the large-scale optimization problems for its simplicity and low memory requirement \cite{23,24,29}. Moreover, in many practical applications accelerating the convergence the sequence generated by iterative method is required, see, for example, \cite{47,35,36}. The momentum acceleration method of Polyak’s heavy ball \cite{22} is the widely used acceleration technique. Polyak \cite{22} firstly introduced the momentum technique to accelerate the trajectories and speed up convergence in numerical computations for the two-order time dynamical system in the context of minimization of a smooth convex function, see \cite{8,9} for discussions in this direction.

Therefore, for both actual implementation and to create a firm foundation for the theory of
optimization problem, a most crucial thing would be to devise a more comprehensive multi-agent constrained optimization problem of the form (\ref{eq:prob12}), for example, some cost component functions of $\psi$ are nonsmooth and the remaining are smooth and the constraint problem $\mathcal{P}_{i}$ generalizes several previously considered constraint problems, and to investigate a unified accelerated solution method so that the method handles several previously-discussed optimization problems, and its implementation and convergence is significantly better, both theoretically and practically. This is the important question we address in this paper. To be precise, we consider multi-agent constrained optimization of the form (\ref{eq:prob12}), called, \textit{multi-agent fixed point set constrained optimization problem} (in short, MAFPSCOP), where for each agent $i$ the component function $\mathcal{G}_{i}$ is again a composite of two functions given by
\begin{eqnarray} \label{eq:prob33}
\mathcal{G}_{i}(x)=f_{i}(x)+h_{i}(x)
\end{eqnarray}
and the constraint problem $\mathcal{P}_{i}$ is a fixed point problem given by
\begin{eqnarray} \label{eq:prob44}
\mathcal{P}_{i}:\hspace{4mm}\text{find }  y\in H \text{ such that }T_{i}(y)=y,
\end{eqnarray}
where $f_{i}:H\rightarrow\mathbb{R}$ $(i\in I)$ is convex and nonsmooth
function, $h_{i}:H\rightarrow\mathbb{R}$ $(i\in I)$ is convex and
smooth function, and $T_{i}:H\rightarrow H$ $(i\in I)$
is a nonlinear mapping. Note that here, $\{y\in H:y\text{ solves } \mathcal{P}_{i}\}$ is the set of fixed points of $T_{i}$, denoted by $FixT_{i}$, i.e., $\{y\in H:y\text{ solves } \mathcal{P}_{i}\}=\{y\in H: T_{i}(y)=y\}=FixT_{i}$. The problem MAFPSCOP, i.e., the multi-agent constrained optimization problem (\ref{eq:prob12}) where $\mathcal{G}_{i}$ given by (\ref{eq:prob33}) and $\mathcal{P}_{i}$ given by (\ref{eq:prob44}), generalizes several optimization problems considered in literature and is abundant in many theoretical and practical areas, see for example, \cite{1,2,4,5,6,7,8,9,16,38} and references therein.

Motivated by the ideas in \cite{5,6} and inspired by the theoretical and practical applications of constrained optimization problem, we investigate efficient distributed algorithms for solving MAFPSCOP. Two new class of accelerated distributed proximal gradient methods for solving MAFPSCOP are developed using incremental and parallel computation approach \cite{37} incorporating the proximal \cite{46}, gradient \cite{23},  Halpern \cite{11a}, and heavy ball momentum \cite{22} methods. Our results, not only improve and generalize the several corresponding results, for example \cite{5,6,21,22,38}, but also provide a unified framework for studying problems of the form (\ref{eq:prob12}). 

The rest of the paper is organized as follows. In Section \ref{Sec2}, we provide some basic definitions and propositions from nonlinear and convex analysis. In Section \ref{Sec3}, we present our main results, i.e., we present and analysis the two algorithms proposed for MAFPSCOP. In Section \ref{Sec4}, we perform several numerical experiments to illustrate the efficiency of our algorithms in comparison with others. Finally, at the end of the paper we include an appendix devoted to the proofs of all lemmas we stated in the main result section (Section \ref{Sec3}).
\section{Required basics from convex analysis}\label{Sec2}
In this we recall known results from convex analysis. Unless otherwise stated, $H$ is a real Hilbert space with inner product $\langle .,. \rangle$ and its induced norm is $\|.\|$.\vspace{2mm}
\begin{definition} The mapping mapping $T:H\rightarrow H$ is called: 
	\begin{description}		
		\item[(a).]  $\gamma$-Lipschitz ($\gamma>0$) if
		$$\|Tx-Ty\|\leq \gamma\|x-y\|\mbox{ for all } x,y\in H.$$	
		\item[(b).]  $\gamma$-contraction mapping if $T$ is $\gamma$-Lipschitz with $\gamma\in (0,1)$. 	
		\item[(c).] nonexpansive mapping if $T$ is $1$-Lipschitz.
		\item[(d).]  firmly nonexpansive if\\
		$$\|Tx-Ty\|^{2}\leq \|x-y\|^{2}-||(I-T)x-(I-T)y\|^{2}, \forall x,y\in H,$$
		which is equivalent to	
		$\|Tx-Ty\|^{2}\leq \langle Tx-Ty,x-y\rangle,\hspace{1mm}\forall x,y\in H.$
	\end{description}	
\end{definition}
If $T$ is firmly nonexpansive, $I-T$ is also firmly nonexpansive. The class of firmly nonexpansive mappings include many types of nonlinear operators arising in convex optimization, see, for instance, \cite{40}.\vspace{2mm}

\begin{definition} Let $X$ be a Banach space. $X$ satisfies Opial's condition if for each $x$ in $X$ and each sequence $\{x_{n}\}$ weakly convergent to $x$ the inequality
	$$\liminf_{n\rightarrow \infty}\|x_{n}-x\|<\liminf_{n\rightarrow \infty}\|x_{n}-y\|$$
	holds for each $y\in X$ with $y\neq x$.\vspace{2mm}
\end{definition}	

It is well known that all Hilbert spaces satisfy Opial's condition \cite{111ABC}.

A set-valued mapping $V:H\rightarrow 2^{H}$ is called \emph{monotone} if, for all $x,y\in H$, $z\in Vx$ and $w\in Vy$ imply
$\langle x-y,z-w\rangle\geq 0.$

Let  $g:H\rightarrow\mathbb{R}\cup\{+\infty\}$ convex function. 
The domain of $f$ (dom$f$) is defined by $dom g=\{x\in H:g(x)<\infty\}$. 
\begin{description}
	\item[(i).]	The \emph{subdifferential} of $g$ at $x\in H$, denoted by $\partial g(x)$, is given by
	$\partial g(x)=\{y\in H:g(z)\geq g(x)+\langle y,z-x \rangle, \forall z\in H\}.
	$ $g$ is \emph{subdifferentiable} at $x$ if $\partial g(x)\neq\emptyset$.
	Since $g$ is convex, $\bar{x}\in H$ minimizes $g$ if and only if $0\in \partial g(\bar{x})$. 
	
	\item[(ii).] If $g$ is proper, lower semicontinuous, and convex function, then the \emph{proximal 
		operator} of $g$ with scaling parameter $\lambda>0$ is a mapping
	$\mbox{prox}_{\lambda g}:H\rightarrow H$ given by
	$\mbox{prox}_{\lambda g}(x)=\arg\min\{g(y)+\frac{1}{2\lambda}\|x-y\|^{2}:y\in H\}.$
	The notion of the proximal mapping was introduced by Moreau \cite{46}.

	\item[(iii).] $g$  is said to be \emph{Fr\'echet differentiable} at $x\in H$ if there exists a
	unique $g_{x}\in H$ such that 
	$\lim_{0\neq y\rightarrow 0}\frac{g(x+y)-g(x)-\langle y,g_{x}\rangle}{\|y\|}=0.$
	For $x\in H$, if such $g_{x}\in H$ exists, then $g_{x}$ is called the \emph{Fr\'echet gradient} (\emph{gradient}) of $g$ at $x$, denoted by $\nabla g(x)$, i.e., $\nabla g(x)=g_{x}$. We say that $g$ is \emph{Fr\'echet differentiable} if $g$ is \emph{Fr\'echet differentiable} on $H$, i.e., $\nabla g(x)=g_{x}$ exists for all $x\in H$.
\end{description}

\begin{proposition}\cite{40} 
	If $h:H\rightarrow\mathbb{R}\cup\{+\infty\}$ is proper, convex and Fr\'echet differentiable function, then for $x\in dom h$ we have $\partial h(x)=\{\nabla h(x)\}$.\vspace{2mm}
\end{proposition}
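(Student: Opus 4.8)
The plan is to establish the set equality $\partial h(x)=\{\nabla h(x)\}$ by proving the two inclusions separately, using only the first-order consequence of convexity along segments together with the Fréchet differentiability of $h$ at $x$. Throughout I may restrict attention to $z\in\mathrm{dom}\,h$, since the subgradient inequality recalled in item (i) holds trivially whenever $h(z)=+\infty$.

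First I would show that $\nabla h(x)\in\partial h(x)$, i.e. that the gradient is always a subgradient. Fix $z\in\mathrm{dom}\,h$ and exploit convexity along the segment joining $x$ and $z$: for $t\in(0,1]$ the inequality $h(x+t(z-x))\le(1-t)h(x)+th(z)$ rearranges to
\begin{eqnarray*}
\frac{h(x+t(z-x))-h(x)}{t}\le h(z)-h(x).
\end{eqnarray*}
Letting $t\downarrow 0$, the definition of the Fréchet gradient in item (iii) gives $h(x+t(z-x))-h(x)=t\langle\nabla h(x),z-x\rangle+o(t)$, so the left-hand side converges to $\langle\nabla h(x),z-x\rangle$. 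Hence $h(z)\ge h(x)+\langle\nabla h(x),z-x\rangle$ for every $z\in H$, which is exactly the statement $\nabla h(x)\in\partial h(x)$. In particular $\partial h(x)\neq\emptyset$.

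For the reverse inclusion $\partial h(x)\subseteq\{\nabla h(x)\}$ I would take an arbitrary $y\in\partial h(x)$ and show $y=\nabla h(x)$. By definition, $h(z)-h(x)\ge\langle y,z-x\rangle$ for all $z$; combining this with the differentiability expansion $h(z)-h(x)=\langle\nabla h(x),z-x\rangle+o(\|z-x\|)$ yields $\langle\nabla h(x)-y,\,z-x\rangle+o(\|z-x\|)\ge 0$. Specializing to $z=x+tu$ for a fixed unit vector $u$ and $t>0$, dividing by $t$, and letting $t\downarrow 0$ gives $\langle\nabla h(x)-y,u\rangle\ge 0$; applying the same argument with $-u$ forces $\langle\nabla h(x)-y,u\rangle=0$. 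Since $u$ is an arbitrary unit vector, $\nabla h(x)-y=0$, so $y=\nabla h(x)$. The two inclusions together give $\partial h(x)=\{\nabla h(x)\}$.

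The computations are entirely routine; the only point requiring care is the handling of the Fréchet remainder term in the Hilbert-space setting, where the gradient is characterized through a limit rather than partial derivatives. I expect the main (mild) obstacle to be justifying cleanly that the little-$o$ contributions vanish in the two passages to the limit — specifically that $o(t)/t\to 0$ in the first part and that the directional quotient of $o(\|z-x\|)$ vanishes along rays in the second — which is why I would keep the normalization $\|u\|=1$ and work along the segment and ray $z=x+tu$ so that the remainder estimate stays transparent.
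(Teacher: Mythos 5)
The paper does not prove this proposition at all: it is stated as a known fact and attributed to Bauschke and Combettes \cite{40}, and the appendix only supplies proofs for the lemmas of Section 3. So there is no in-paper argument to compare against; what can be said is that your blind proof is a correct, self-contained derivation of the standard result. The first inclusion (convexity of $h$ along the segment gives monotone difference quotients, whose limit is the directional derivative $\langle\nabla h(x),z-x\rangle$ by Fr\'echet differentiability, hence $\nabla h(x)\in\partial h(x)$) and the reverse inclusion (any subgradient $y$ must satisfy $\langle\nabla h(x)-y,u\rangle\ge 0$ for every unit $u$ after dividing the combined inequality by $t$ and letting $t\downarrow 0$, and symmetrizing in $\pm u$ forces $y=\nabla h(x)$) are both handled correctly, and your restriction to $z\in\mathrm{dom}\,h$ disposes of the trivial case properly. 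Two small housekeeping points, neither of which is a gap: the remainder in your first limit is really $o(t\|z-x\|)$, which for fixed $z$ is what you call $o(t)$, so the passage to the limit is fine; and the whole argument tacitly uses that $h$ is finite on a neighborhood of $x$ so that $h(x+tu)$ makes sense for small $t$ --- this is already built into the paper's definition of Fr\'echet differentiability at $x$ (and is automatic in the paper's applications, where each $h_i$ is real-valued on all of $H$), but it is worth stating since the proposition is phrased for $h$ with values in $\mathbb{R}\cup\{+\infty\}$ and a general $x\in\mathrm{dom}\,h$.
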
 

\begin{proposition}\label{P4}\cite{38}
	Let $h:H\rightarrow\mathbb{R}$ be convex and
	Fr\'echet differentiable, and let $\nabla h:H\rightarrow H$ be $\frac{1}{L}$-Lipschitz continuous. For $\lambda\in[0,2L]$, we define $S_{\lambda}:H\rightarrow H$ for all $x\in H$ by $S_{\lambda}(x)=x-\lambda \nabla h(x)$. Then $S_{\lambda}$ is
	nonexpansive, i.e., 
	$\|
	S_{\lambda}(x)-S_{\lambda}(y)\|\leq\|x-y\|$ all $x,y\in H$.\vspace{2mm}
\end{proposition}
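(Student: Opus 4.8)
The plan is to isolate the one genuinely analytic ingredient, the \emph{cocoercivity} of $\nabla h$, and then finish by a short algebraic expansion. Setting $u=x-y$ and $v=\nabla h(x)-\nabla h(y)$, the definition of $S_{\lambda}$ gives
\[
\|S_{\lambda}(x)-S_{\lambda}(y)\|^{2}=\|u-\lambda v\|^{2}=\|u\|^{2}-2\lambda\langle u,v\rangle+\lambda^{2}\|v\|^{2}.
\]
If I can show that $\nabla h$ is $L$-cocoercive, i.e. $\langle u,v\rangle\geq L\|v\|^{2}$, then
\[
\|S_{\lambda}(x)-S_{\lambda}(y)\|^{2}\leq\|u\|^{2}+(\lambda^{2}-2\lambda L)\|v\|^{2}=\|x-y\|^{2}+\lambda(\lambda-2L)\|v\|^{2}.
\]
The hypothesis $\lambda\in[0,2L]$ makes $\lambda(\lambda-2L)\leq 0$, so the last term is nonpositive and $\|S_{\lambda}(x)-S_{\lambda}(y)\|\leq\|x-y\|$ follows immediately.

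Thus everything reduces to the cocoercivity estimate $\langle \nabla h(x)-\nabla h(y),x-y\rangle\geq L\|\nabla h(x)-\nabla h(y)\|^{2}$, which is precisely the Baillon--Haddad property for a convex function with $\frac{1}{L}$-Lipschitz gradient. To prove it I would fix $y$ and introduce the auxiliary function $\phi(z)=h(z)-\langle\nabla h(y),z\rangle$. It is convex, its gradient $\nabla\phi(z)=\nabla h(z)-\nabla h(y)$ is again $\frac{1}{L}$-Lipschitz, and it is globally minimized at $z=y$ since $\nabla\phi(y)=0$. Applying the descent lemma for a $\frac{1}{L}$-Lipschitz gradient,
\[
\phi(z)\leq\phi(w)+\langle\nabla\phi(w),z-w\rangle+\tfrac{1}{2L}\|z-w\|^{2},
\]
at the particular point $z=w-L\nabla\phi(w)$ and using $\phi(y)\leq\phi(z)$, I obtain the sufficient-decrease bound $\phi(w)-\phi(y)\geq\frac{L}{2}\|\nabla\phi(w)\|^{2}$, which unwinds to
\[
h(w)-h(y)-\langle\nabla h(y),w-y\rangle\geq\tfrac{L}{2}\|\nabla h(w)-\nabla h(y)\|^{2}.
\]
Writing the same inequality with the roles of $w$ and $y$ exchanged and adding the two, the function values cancel while the linear terms combine into $\langle\nabla h(w)-\nabla h(y),w-y\rangle\geq L\|\nabla h(w)-\nabla h(y)\|^{2}$, which is exactly the desired cocoercivity.

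The main obstacle is this cocoercivity step; the closing expansion is routine. In particular the descent lemma itself must be justified from the Lipschitz bound, for instance through the integral identity $h(w)-h(z)=\int_{0}^{1}\langle\nabla h(z+t(w-z)),w-z\rangle\,dt$ followed by Cauchy--Schwarz, so I would either include that short computation or invoke it as standard. The point I must be careful about is that Lipschitz continuity of $\nabla h$ alone does \emph{not} give cocoercivity; convexity of $h$ is essential and enters through the fact that the auxiliary $\phi$ attains its minimum at $y$, which is what converts the descent lemma into the lower bound above.
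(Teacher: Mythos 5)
Your proof is correct. Note that the paper itself gives no proof of this proposition: it is stated as a known result imported from reference \cite{38}, so there is no in-paper argument to compare against. What you have written is the standard Baillon--Haddad route --- derive $L$-cocoercivity of $\nabla h$ from convexity plus the $\tfrac{1}{L}$-Lipschitz bound via the descent lemma and the auxiliary function $\phi(z)=h(z)-\langle\nabla h(y),z\rangle$, then expand $\|u-\lambda v\|^{2}$ and use $\lambda(\lambda-2L)\le 0$ --- and all the steps check out, including the constants ($\tfrac{1}{2L}\|z-w\|^{2}$ in the descent lemma and the evaluation point $z=w-L\nabla\phi(w)$ yielding the $\tfrac{L}{2}\|\nabla\phi(w)\|^{2}$ decrease). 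You also correctly flag the one point where a careless reader could go wrong: Lipschitz continuity of the gradient alone does not give cocoercivity, and convexity is what makes $y$ a global minimizer of $\phi$ so that the descent lemma can be turned into the required lower bound.
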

See more about the properties of Fr\'echet differentiable convex function $h:H\rightarrow\mathbb{R}$ and its gradient operator $\nabla h$ in Bauschke and Combettes \cite{40}. \vspace{2mm}

\begin{proposition}\label{P2} \cite{40} Let $f:H\rightarrow\mathbb{R}\cup\{+\infty\}$ be proper, lower semicontinuous, and convex function, and let $\lambda>0$. Then, the following hold:
	\begin{description}
		\item[(a).]	The subdifferential $\partial f$ of $f$ is a monotone operator.
		\item[(b).] For $x\in H$, we have
		$\mbox{prox}_{\lambda f}(x)=y \mbox{ if and only if } x-y\in\lambda\partial f(y).$
		\item[(c).]
		The proximal mapping is a
		firmly nonexpansive mapping.
		\item[(d).]
		If $f$ is continuous at $x\in\mbox{dom}f$, $\partial f(x)$ is nonempty. Moreover, $\delta>0$ exists such that $\partial f(B(x,\delta))$ is bounded, where $B(x,\delta)$
		stands for a closed ball with center $x$ and radius $\delta$.\vspace{2mm}
	\end{description}
\end{proposition}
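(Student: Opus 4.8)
The plan is to establish the four assertions in the order (a), (b), (c), (d), since each builds on its predecessors, and to isolate the only genuinely delicate step at the very end. For (a) I would argue directly from the definition of the subdifferential. Given $x,y\in H$, $z\in\partial f(x)$ and $w\in\partial f(y)$, the two subgradient inequalities $f(y)\geq f(x)+\langle z,y-x\rangle$ and $f(x)\geq f(y)+\langle w,x-y\rangle$ hold simultaneously; adding them and cancelling the common term $f(x)+f(y)$ yields $\langle z-w,x-y\rangle\geq 0$, which is exactly the monotonicity of $\partial f$.

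For (b) I would use the first-order optimality characterization of the strongly convex problem defining the proximal mapping. Writing $g(\cdot)=f(\cdot)+\frac{1}{2\lambda}\|x-\cdot\|^{2}$, the point $y=\mbox{prox}_{\lambda f}(x)$ is the unique minimizer of $g$, so by the optimality condition recalled in the preliminaries, $0\in\partial g(y)$. Since the quadratic term is everywhere Fr\'echet differentiable with gradient $\frac{1}{\lambda}(\,\cdot-x)$, the Moreau--Rockafellar sum rule gives $\partial g(y)=\partial f(y)+\frac{1}{\lambda}(y-x)$. Hence $0\in\partial f(y)+\frac{1}{\lambda}(y-x)$, equivalently $x-y\in\lambda\partial f(y)$, and every implication in this chain is reversible, yielding the converse.

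For (c) I would combine (a) and (b). Put $u=\mbox{prox}_{\lambda f}(x)$ and $v=\mbox{prox}_{\lambda f}(y)$. By (b) we have $\frac{1}{\lambda}(x-u)\in\partial f(u)$ and $\frac{1}{\lambda}(y-v)\in\partial f(v)$, so monotonicity of $\partial f$ from part (a) gives $\langle u-v,(x-u)-(y-v)\rangle\geq 0$. Expanding the bracket and rearranging produces $\langle u-v,x-y\rangle\geq\|u-v\|^{2}$, which is precisely the equivalent characterization of firm nonexpansiveness stated in the definition, so the proximal mapping is firmly nonexpansive.

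Part (d) is where I expect the main obstacle, and two distinct facts must be shown. Nonemptiness of $\partial f(x)$ would follow from a separation argument: continuity of $f$ at $x\in\mbox{dom}f$ forces the epigraph of $f$ to have nonempty interior near the point $(x,f(x))$, so the Hahn--Banach theorem supplies a supporting hyperplane at $(x,f(x))$ whose slope is a subgradient. For the local boundedness I would exploit that a convex function continuous at an interior point of its domain is in fact locally Lipschitz there: continuity yields an upper bound $f\leq M$ on some ball $B(x,\delta)$, and combining this bound with convexity produces a Lipschitz constant $\kappa$ on a smaller concentric ball; the subgradient inequality then forces $\|z\|\leq\kappa$ for every $z\in\partial f(u)$ with $u$ in that smaller ball, giving boundedness of $\partial f(B(x,\delta))$ after shrinking $\delta$. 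The delicate passage is exactly this step from pointwise continuity to a uniform Lipschitz estimate, since it is what converts a local regularity hypothesis into the global norm bound on the subgradients.
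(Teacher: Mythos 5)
Your proposal is correct on all four parts: the additive argument for monotonicity, the optimality-condition-plus-sum-rule characterization of the proximal point, the derivation of firm nonexpansiveness from (a) and (b), and the separation/local-Lipschitz argument for (d) are all the standard proofs. The paper itself supplies no proof of this proposition --- it is imported verbatim from Bauschke and Combettes \cite{40} --- so there is nothing to compare against; your derivation is essentially the one found in that reference, and your identification of the passage from pointwise continuity to a uniform local Lipschitz bound as the only delicate step in (d) is accurate.
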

\begin{proposition}\label{P3}\cite{40}
	When $f:H\rightarrow\mathbb{R}$ is convex, $f$ is weakly lower semi-continuous if and only if $f$ is lower semicontinuous.\vspace{2mm}
\end{proposition}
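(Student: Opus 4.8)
The plan is to prove the two implications separately, with the forward direction (weakly lower semicontinuous $\Rightarrow$ lower semicontinuous) being immediate and requiring no convexity, while the converse (lower semicontinuous $\Rightarrow$ weakly lower semicontinuous) rests essentially on convexity via a separation argument.

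For the easy direction I would use the fact that strong convergence implies weak convergence: if $x_{n}\rightarrow x$ in norm, then $x_{n}\rightharpoonup x$ weakly. Hence, assuming $f$ is weakly lower semicontinuous and taking any strongly convergent sequence $x_{n}\rightarrow x$, the defining inequality $f(x)\leq\liminf_{n\rightarrow\infty}f(x_{n})$ applies verbatim, which is exactly lower semicontinuity. For the converse, the key is to reformulate both notions through the geometry of sublevel sets. I would recall the standard equivalences: $f$ is lower semicontinuous if and only if each sublevel set $\{x\in H:f(x)\leq\alpha\}$ is strongly closed, and $f$ is weakly lower semicontinuous if and only if each such sublevel set is weakly closed. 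Since $f$ is convex, every sublevel set $\{x\in H:f(x)\leq\alpha\}$ is a convex set. This reduces the whole statement to the single topological fact that a \emph{convex} strongly closed subset of $H$ is weakly closed.

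The main obstacle, and the only place convexity is genuinely used, is establishing this last fact. I would prove it by the Hahn--Banach separation theorem: a strongly closed convex set $C\subseteq H$ equals the intersection of all closed half-spaces containing it, each of the form $\{x\in H:\langle a,x\rangle\leq\beta\}$ for some $a\in H$ and $\beta\in\mathbb{R}$. Because each functional $x\mapsto\langle a,x\rangle$ is weakly continuous by the very definition of the weak topology, every such half-space is weakly closed, and therefore the intersection $C$ is weakly closed as well. Applying this conclusion to each sublevel set of $f$ upgrades strong closedness to weak closedness, which by the sublevel-set characterization is precisely the weak lower semicontinuity of $f$, completing the argument. (Equivalently, one may argue sequentially: given $x_{n}\rightharpoonup x$, set $\ell=\liminf_{n}f(x_{n})$, pass to a subsequence attaining $\ell$, note that for each $\varepsilon>0$ the tail lies in the closed convex set $\{y:f(y)\leq\ell+\varepsilon\}$, invoke weak closedness of that convex set to get $f(x)\leq\ell+\varepsilon$, and let $\varepsilon\downarrow 0$.)
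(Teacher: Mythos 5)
Your argument is correct. The paper itself gives no proof of this proposition --- it is quoted from Bauschke and Combettes \cite{40} --- and your two-step argument (the trivial direction via ``strong convergence implies weak convergence,'' and the converse via convex sublevel sets being strongly closed, hence weakly closed by Hahn--Banach separation) is precisely the standard proof found there, including the sequential variant with the sets $\{y: f(y)\leq \ell+\varepsilon\}$. Nothing is missing; convexity is used exactly where it must be.
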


\begin{proposition}\cite{10a}\label{P5}
	Let $\{c_{n}\}$ and $\{\beta_{n}\}$ be a sequence of real numbers such that $c_{n}\geq 0$ and 
	$c_{n+1}\leq (1-\alpha_{n})c_{n}+\alpha_{n}\beta_{n}$ for all $n\geq 1$,
	where  $0<\alpha_{n}\leq 1$. If $\sum\alpha_{n}=\infty$ and   $\limsup_{n\rightarrow \infty}\beta_{n}\leq 0$, then $c_{n}\rightarrow 0$ as $n\rightarrow \infty$.
\end{proposition}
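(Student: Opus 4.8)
The plan is to show that $\limsup_{n\to\infty} c_n \le 0$; since $c_n \ge 0$ by hypothesis, this already forces $\liminf_{n\to\infty} c_n \ge 0$ and hence $c_n \to 0$. So everything reduces to controlling the $\limsup$. To get a handle on the term $\alpha_n \beta_n$, I would first fix an arbitrary $\varepsilon > 0$ and invoke $\limsup_{n\to\infty}\beta_n \le 0$ to produce an index $N$ such that $\beta_n \le \varepsilon$ for every $n \ge N$. For such $n$ the recursion then reads $c_{n+1} \le (1-\alpha_n)c_n + \alpha_n\varepsilon$, which I would rewrite in the centered form $c_{n+1}-\varepsilon \le (1-\alpha_n)(c_n-\varepsilon)$, using $(1-\alpha_n)\varepsilon + \alpha_n\varepsilon = \varepsilon$.

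Next I would set $d_n = c_n - \varepsilon$ and exploit the crucial fact that $0 < \alpha_n \le 1$ guarantees $1-\alpha_n \in [0,1)$, so the multiplier is nonnegative. The inequality $d_{n+1} \le (1-\alpha_n)d_n$ only bounds $d_{n+1}$ from above and therefore does not by itself control the sign of $d_n$ along the iteration; to sidestep this I would pass to positive parts. A short case check (if $d_n \ge 0$ then $d_{n+1} \le (1-\alpha_n)d_n^{+}$ and hence $d_{n+1}^{+} \le (1-\alpha_n)d_n^{+}$; if $d_n < 0$ then $d_n^{+}=0$ and $d_{n+1} \le (1-\alpha_n)d_n \le 0$ gives $d_{n+1}^{+}=0$) shows that $d_{n+1}^{+} \le (1-\alpha_n)d_n^{+}$ for all $n \ge N$, where $x^{+}=\max\{x,0\}$. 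Iterating this clean inequality yields $d_m^{+} \le \bigl(\prod_{k=N}^{m-1}(1-\alpha_k)\bigr)d_N^{+}$ for every $m > N$.

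Finally I would estimate the product using $1-\alpha_k \le e^{-\alpha_k}$, so that $\prod_{k=N}^{m-1}(1-\alpha_k) \le \exp\bigl(-\sum_{k=N}^{m-1}\alpha_k\bigr)$; since $\sum_{n}\alpha_n = \infty$, the exponent tends to $-\infty$ and the product tends to $0$ as $m\to\infty$. Consequently $d_m^{+}\to 0$, which gives $\limsup_{m\to\infty}(c_m-\varepsilon)\le 0$, i.e. $\limsup_{m\to\infty}c_m \le \varepsilon$. As $\varepsilon>0$ was arbitrary, $\limsup_{m\to\infty}c_m \le 0$, completing the argument. The one genuinely delicate point is the sign issue just described: because the recurrence is a one-sided estimate, the naive iteration could fail to stay within the regime where the product bound applies, and it is the hypothesis $\alpha_n \le 1$ (ensuring $1-\alpha_n \ge 0$) together with the positive-part reformulation that resolves it; the remaining steps are the routine product-to-exponential estimate and the standard $\varepsilon$-argument.
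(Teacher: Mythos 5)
Your proof is correct and complete. The paper does not prove this proposition at all---it is quoted from the cited reference as a known result (it is the standard lemma of Xu type used in Halpern-iteration convergence proofs)---and your argument is essentially the canonical one: the $\varepsilon$-shift $d_n=c_n-\varepsilon$, the telescoping product $\prod(1-\alpha_k)\le\exp\bigl(-\sum\alpha_k\bigr)\to 0$, and the final squeeze against $c_n\ge 0$. Your positive-part device $d_n^{+}$ is a clean way to handle the one-sided nature of the recursion; the equivalent observation in the literature is simply that if $d_n\le 0$ for some $n\ge N$ then $d_m\le 0$ for all $m\ge n$ (since $1-\alpha_n\ge 0$), so one may assume $d_n>0$ on the whole tail and iterate directly---either route closes the same gap.
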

\section{Proposed methods}\label{Sec3}
In this section, under the following assumptions and parameter restrictions,  we propose and analysis two new accelerated algorithms for MAFPSCOP. 
\begin{assumption}\label{assumption1} In MAFPSCOP, the following basic assumptions on $f_{i}$, $h_{i}$ and $T_{i}$ are made throughout the paper:
	\begin{description}
		\item[\textbf{(A1)}]  $f_{i}:H\rightarrow\mathbb{R}$ $(i\in I)$ is continuous and convex with
		$dom(f_{i})=H$ and $\mbox{prox}_{\lambda f_{i}}$ can be efficiently computed; 
		\item[\textbf{(A2)}] $h_{i}:H\rightarrow\mathbb{R}$ $(i\in I)$ is convex and
		Fr\'echet differentiable with $dom(h_{i})=H$, and $\nabla h_{i}:H\rightarrow H$ $(i\in I)$ is $(1/L_{i})$-Lipschitz continuous; 
		\item[\textbf{(A3)}] $T_{i}:H\rightarrow H$ $(i\in I)$ is firmly nonexpansive mapping;	
		\item[\textbf{(A4)}]  $S=\bigcap_{i=1}^{M}FixT_{i}\neq\emptyset$ and $\Omega=\big\{\hat{x}\in S: \psi (\hat{x})=\min_{x\in S}\psi (x) \big\}\neq\emptyset.$
	\end{description}
\end{assumption}

\begin{condition}\label{condition1}
	Let $\{\theta_{n}\}$, $\{\lambda_{n}\}$, $\{\beta_{n}\}$ and $\{\alpha_{n}\}$ be decreasing real sequences converging to 0 where $\theta_{n}\in [0,1)$,  $\lambda_{n}\in (0,2\min_{i\in I}L_{i}]$, $\beta_{n}\in (0,1]$, $\alpha_{n}\in (0,1]$, 
	and satisfy the following conditions.
	\begin{description}
		\item[\textbf{(C1)}]  $\sum\limits_{n=1}^{\infty}\alpha_{n}=\infty$;\hspace{39mm} $\textbf{(C5)}$  $\lim\limits_{n\rightarrow\infty}\frac{\theta_{n}}{\alpha_{n+1}\lambda_{n+1}}=0$; 
		\item[\textbf{(C2)}] $\lim\limits_{n\rightarrow\infty}\frac{1}{\alpha_{n+1}}\big|\frac{1}{\lambda_{n+1}}-\frac{1}{\lambda_{n}}\big|=0$;   \hspace{16mm} $\textbf{(C6)}$ $\frac{\lambda_{n}}{\lambda_{n+1}}\leq\sigma$ for some $\sigma\geq 1$; 
		\item[\textbf{(C3)}]$\lim\limits_{n\rightarrow\infty}\frac{1}{\lambda_{n+1}}\big|1-\frac{\alpha_{n}}{\alpha_{n+1}}\big|=0$;\hspace{20mm} $\textbf{(C7)}$   $\lim\limits_{n\rightarrow\infty}\frac{\beta_{n}}{\alpha_{n+1}}=0$.		
		\item[\textbf{(C4)}]$\lim\limits_{n\rightarrow\infty}\frac{\alpha_{n}}{\lambda_{n}}=0$;  
	\end{description}
\end{condition}	
Under the Assumption \ref{assumption1}, the proximal of $f_{i}$, gradient of $h_{i}$ and the Halpern of $T_{i}$ will be implemented in developing the proposed methods.

\subsection{\textbf{Distributed Accelerated Incremental Type Algorithm}}\label{subsec1}
In this subsection, we introduce an incremental type Halpern and a proximal gradient algorithm with inertial extrapolated and we analyze the convergence of the algorithm for solving MAFPSCOP.

\par\noindent\rule{\textwidth}{1pt}
\begin{alg}\label{algorithm1} (Distributed Accelerated Incremental Type Algorithm)\\  Let $\{\theta_{n}\}$, $\{\lambda_{n}\}$, $\{\beta_{n}\}$ and $\{\alpha_{n}\}$ be real sequences satisfying \textbf{Condition \ref{condition1}}. Choose $x_{1}\in H$, $w^{(i)}_{0},z^{(i)}_{0},u^{(i)}\in H$ $(i\in I)$ and  $d^{(i)}_{1}=-\nabla h_{i}(z^{(i)}_{0})$ $(i\in I)$, and follow the following iterative steps for $n=1,2,3,\ldots$.	
	\begin{description}
		\item [\textbf{Step 1.}] Take $w^{(1)}_{n}=x_{n}$ and compute following finite loop ($i=1,\ldots,M$).
		\[
		\begin{cases}
		\mbox{\textbf{Step 1.1.}   Evaluate } z^{(i)}_{n}=w^{(i)}_{n}+\theta_{n}(w^{(i)}_{n}-w^{(i)}_{n-1}).\\
		\mbox{\textbf{Step 1.2.}   Evaluate } \\ 
		\hspace{24mm}y^{(i)}_{n}=\mbox{prox}_{\lambda_{n}f_{i}}(z^{(i)}_{n}+\lambda_{n}d^{(i)}_{n+1}), \\ 
		\hspace{15mm}\mbox{ where } d^{(i)}_{n+1}=-\nabla h_{i}(z^{(i)}_{n})+\beta_{n}d^{(i)}_{n}.\\
		\mbox{\textbf{Step 1.3.}   Evaluate } w^{(i+1)}_{n}=\alpha_{n}u^{(i)}+(1-\alpha_{n})T_{i}(y^{(i)}_{n}).\\
		\mbox{\textbf{Step 1.4}   If } i<M, \mbox{ set } i=i+1 \mbox{ and go to \textbf{Step 1.1}.}  \\ \hspace{16mm}\mbox{Otherwise, i.e., if $i=M$, go to \textbf{Step 2}}. 
		\end{cases}
		\]	
		\item [\textbf{Step 2.}] Set $$x_{n+1}=w^{(M+1)}_{n}.$$
		\item [\textbf{Step 3.}] Set $n:=n+1$ and go to \textbf{Step 1}.
	\end{description}	
\end{alg}
\par\noindent\rule{\textwidth}{1pt}\\\\
Take the following mild boundedness assumption on the involved variable metrics $\{y_{n}^{(i)}\}$ $(i\in I)$ generated by Algorithm \ref{algorithm1}.  

\begin{assumption}\label{assumption2} The sequence $\{y_{n}^{(i)}\}$ $(i\in I)$ generated by Algorithm \ref{algorithm1} is bounded. 
\end{assumption}	
\begin{lemma} \label{L0}
	Suppose for each user $i\in I$, $X_{i}$ is a bounded, closed and convex subset of $H$ (e.g., $X_{i}$ is a closed ball $\mbox{Cl}(B(w_{(i)},r_{i}))$ in $H$ with a sufficiently large radius $r_{i}$) such that $S\subset X_{i}$. Then if $y^{(i)}_{n}$ of Algorithm \ref{algorithm1} is replaced by the computation of the projection
	\begin{eqnarray}  \label{bounded11}
	y^{(i)}_{n}=P_{X_{i}}(\mbox{prox}_{\lambda_{n}f_{i}}(z^{(i)}_{n}+\lambda_{n}d^{(i)}_{n+1})),	
	\end{eqnarray}  
	or $w^{(i+1)}_{n}$ of Algorithm \ref{algorithm1} is replaced by the computation of the projection		
	\begin{eqnarray}  \label{bounded22}
	w^{(i+1)}_{n}=P_{X_{i}}(\alpha_{n}u^{(i)}+(1-\alpha_{n})T_{i}(y^{(i)}_{n})),
	\end{eqnarray}  	  
	the sequence $\{y_{n}^{(i)}\}$ $(i\in I)$ generated by the resulting algorithm is bounded.	
\end{lemma}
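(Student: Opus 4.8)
The statement bundles two independent modifications of Algorithm~\ref{algorithm1} under an ``or'', so the plan is to verify that $\{y_n^{(i)}\}$ stays bounded under each of them separately. The first modification, replacing Step~1.2 by (\ref{bounded11}), is immediate: by construction $y_n^{(i)}=P_{X_i}(\cdot)\in X_i$, and $X_i$ is bounded by hypothesis, so $\{y_n^{(i)}\}$ lies in a bounded set for each $i\in I$. All the real work is in the second modification, where only the update (\ref{bounded22}) is projected and $y_n^{(i)}$ is still produced by the \emph{unprojected} proximal step of Step~1.2.

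For the second case I would propagate boundedness along the computational flow of one outer iteration, in the order $w\rightarrow z\rightarrow\nabla h_i\rightarrow d\rightarrow y$. First, (\ref{bounded22}) forces $w_n^{(i+1)}\in X_i$ for every $i\in I$; since $x_{n+1}=w_n^{(M+1)}\in X_M$ and $x_1$ is a fixed starting point, the sequence $\{x_n\}$ is bounded, hence so is $w_n^{(1)}=x_n$, and therefore every $\{w_n^{(i)}\}_{i=1}^{M+1}$ is bounded. Because $\theta_n\in[0,1)$, the extrapolation $z_n^{(i)}=w_n^{(i)}+\theta_n(w_n^{(i)}-w_{n-1}^{(i)})$ is then bounded as well (the starting data $w_0^{(i)}$ are fixed, which covers the base case $n=1$). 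Since $\nabla h_i$ is $(1/L_i)$-Lipschitz by \textbf{(A2)}, it maps the bounded set $\{z_n^{(i)}\}$ into a bounded set, so $\sup_n\|\nabla h_i(z_n^{(i)})\|=:G_i<\infty$.

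The main obstacle is the conjugate direction $d_{n+1}^{(i)}=-\nabla h_i(z_n^{(i)})+\beta_n d_n^{(i)}$, which is defined by a recursion and a priori could accumulate along $n$. Here I would use the fact from Condition~\ref{condition1} that $\{\beta_n\}$ decreases to $0$: fix $N$ with $\beta_n\le\tfrac12$ for $n\ge N$, so that $\|d_{n+1}^{(i)}\|\le G_i+\tfrac12\|d_n^{(i)}\|$ for $n\ge N$. A one-line induction then bounds $\|d_n^{(i)}\|$ by $\max(\|d_N^{(i)}\|,\,2G_i)$ for all $n\ge N$, and the finitely many earlier terms are harmless; taking the maximum over the finitely many indices $i\in I$ gives a uniform bound. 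Consequently $v_n^{(i)}:=z_n^{(i)}+\lambda_n d_{n+1}^{(i)}$ is bounded, because $\lambda_n\in(0,2\min_{i\in I}L_i]$ is bounded.

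It remains to pass the bound through the proximal step $y_n^{(i)}=\mbox{prox}_{\lambda_n f_i}(v_n^{(i)})$, and the delicate point is that $\lambda_n$ varies (and even tends to $0$). I would exploit that each $\mbox{prox}_{\lambda_n f_i}$ is nonexpansive by Proposition~\ref{P2}(c) to write $\|y_n^{(i)}\|\le\|v_n^{(i)}\|+\|\mbox{prox}_{\lambda_n f_i}(0)\|$, and then bound the reference value $\mbox{prox}_{\lambda_n f_i}(0)$ uniformly in $\lambda_n$. For this, set $y^{*}=\mbox{prox}_{\lambda_n f_i}(0)$; Proposition~\ref{P2}(b) gives $-y^{*}/\lambda_n\in\partial f_i(y^{*})$, and testing the subgradient inequality at $z=0$ yields $\|y^{*}\|^2/\lambda_n\le f_i(0)-f_i(y^{*})$. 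Bounding $f_i$ below by an affine minorant (a consequence of convexity) turns this into a quadratic inequality in $\|y^{*}\|$ whose coefficients involve only $\min_{i\in I}L_i$ and fixed constants, so $\|\mbox{prox}_{\lambda_n f_i}(0)\|$ is bounded independently of $n$. Combining the displayed estimates gives the boundedness of $\{y_n^{(i)}\}$, which is exactly the content of Assumption~\ref{assumption2}.
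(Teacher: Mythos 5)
Your proof is correct and follows the paper's skeleton for most of the argument: the first case is dispatched by noting $y_n^{(i)}\in X_i$, and in the second case you propagate boundedness exactly as the paper does, from $w_n^{(i+1)}\in X_i$ to $\{z_n^{(i)}\}$, then to $\{\nabla h_i(z_n^{(i)})\}$ via Lipschitz continuity, and then to $\{d_n^{(i)}\}$ by the same induction exploiting $\beta_n\le\tfrac12$ for large $n$. Where you genuinely diverge is the last step, passing the bound through $\mbox{prox}_{\lambda_n f_i}$ with a varying parameter. The paper anchors at a point $x\in\arg\min f_i$, which is a common fixed point of every $\mbox{prox}_{\lambda_n f_i}$, and then invokes nonexpansiveness to get $\|y_n^{(i)}-x\|\le\|z_n^{(i)}-x\|+2\min_{i}L_i\,\|d_{n+1}^{(i)}\|$. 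This is shorter, but it silently assumes $\arg\min f_i\neq\emptyset$, which is not guaranteed by Assumption~\ref{assumption1} (A1) (a continuous convex $f_i$ with full domain need not attain its infimum). Your route --- anchoring at $\mbox{prox}_{\lambda_n f_i}(0)$, which always exists, and bounding it uniformly in $n$ via the inclusion $-y^{*}/\lambda_n\in\partial f_i(y^{*})$, the subgradient inequality at $0$, an affine minorant of $f_i$, and $\lambda_n\le 2\min_i L_i$ --- costs a few extra lines but removes that hidden hypothesis, so it is the more robust argument under the stated assumptions.
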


Based on Lemma \ref{L0}, we can see that if $S=\bigcap_{i=1}^{M}FixT_{i}$ is bounded (e.g., $FixT_{i_{0}}$ is bounded for some $i_{0}\in I$), the existence of a bounded, closed and convex subset $X_{i}$ of $H$ is  granted, and hence Assumption \ref{assumption2} can be eliminated by replacing $y^{(i)}_{n}$ by (\ref{bounded11}) or replacing $w^{(i+1)}_{n}$ by (\ref{bounded22}) in Algorithm \ref{algorithm1}. Note that also the convergence analysis of the new algorithm obtained by replacing $y^{(i)}_{n}$ by (\ref{bounded11}) or replacing $w^{(i+1)}_{n}$ by (\ref{bounded22}) in Algorithm \ref{algorithm1} is the same as the convergence analysis of Algorithm \ref{algorithm1} under Assumptions \ref{assumption1} and \ref{assumption2}. We next examine the convergence analysis of Algorithm \ref{algorithm1} under Assumptions \ref{assumption1} and  \ref{assumption2}.

\begin{lemma}\label{L1}
	The sequences $\{T_{i}(y^{(i)}_{n})\}$ $(i\in I)$,  $\{x_{n}\}$, $\{w_{n}^{(i)}\}$ $(i\in I)$, $\{z_{n}^{(i)}\}$ $(i\in I)$ and $\{d_{n}^{(i)}\}$ $(i\in I)$ generated by Algorithm \ref{algorithm1} are bounded.
\end{lemma}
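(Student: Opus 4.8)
The plan is to establish the five boundedness claims as a \emph{cascade}, starting from the one family that Assumption \ref{assumption2} supplies for free, namely $\{y_n^{(i)}\}$, and then propagating boundedness forward through the four update rules of Algorithm \ref{algorithm1} in the order
$$\{T_i(y_n^{(i)})\}\ \longrightarrow\ \{w_n^{(i)}\},\{x_n\}\ \longrightarrow\ \{z_n^{(i)}\}\ \longrightarrow\ \{d_n^{(i)}\}.$$
The only delicate link is the last one; everything before it is a routine propagation of norm bounds through convex combinations and through a Lipschitz gradient.

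First I would fix a point $p\in S=\bigcap_{i\in I}FixT_i$, which exists by (A4), so that $T_i(p)=p$ for every $i\in I$. Each $T_i$ is firmly nonexpansive by (A3), hence nonexpansive, so $\|T_i(y_n^{(i)})\|\leq \|T_i(y_n^{(i)})-T_i(p)\|+\|p\|\leq \|y_n^{(i)}-p\|+\|p\|$, and boundedness of $\{y_n^{(i)}\}$ from Assumption \ref{assumption2} immediately gives boundedness of $\{T_i(y_n^{(i)})\}$. Next, from Step 1.3 and $\alpha_n\in(0,1]$ we get $\|w_n^{(i+1)}\|\leq \|u^{(i)}\|+\|T_i(y_n^{(i)})\|$; combining this with $w_n^{(1)}=x_n$, $x_{n+1}=w_n^{(M+1)}$ and the fixed initial datum $x_1$, a finite induction over $i=1,\dots,M$ yields boundedness of $\{x_n\}$ and of every $\{w_n^{(i)}\}$. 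Then Step 1.1 with $\theta_n\in[0,1)$ gives $\|z_n^{(i)}\|\leq \|w_n^{(i)}\|+\theta_n\|w_n^{(i)}-w_{n-1}^{(i)}\|\leq 2\|w_n^{(i)}\|+\|w_{n-1}^{(i)}\|$, so $\{z_n^{(i)}\}$ is bounded as well.

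The main obstacle is the boundedness of the conjugate-gradient directions $\{d_n^{(i)}\}$, which obey the accumulating recursion $d_{n+1}^{(i)}=-\nabla h_i(z_n^{(i)})+\beta_n d_n^{(i)}$. Since $\{z_n^{(i)}\}$ is bounded and $\nabla h_i$ is $(1/L_i)$-Lipschitz by (A2), hence continuous, the forcing term $\{\nabla h_i(z_n^{(i)})\}$ is bounded, say by a constant $G$. The point where the structure of Condition \ref{condition1} becomes indispensable is that $\{\beta_n\}$ decreases to $0$: I would choose $N_0$ with $\beta_n\leq 1/2$ for all $n\geq N_0$, so that for such $n$ the recursion becomes a contraction $\|d_{n+1}^{(i)}\|\leq G+\tfrac12\|d_n^{(i)}\|$. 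An elementary induction then keeps $\|d_n^{(i)}\|$ below $\max\{\|d_{N_0}^{(i)}\|,\,2G\}$ for every $n\geq N_0$ (the only nontrivial verification being $G+\tfrac12 M^\ast\leq M^\ast$ when $M^\ast\geq 2G$), while the finitely many initial directions $d_1^{(i)},\dots,d_{N_0}^{(i)}$ are trivially bounded. Hence $\{d_n^{(i)}\}$ is bounded. Without the decay $\beta_n\to 0$ the accumulated directions could in principle diverge, so this contraction argument — rather than any of the earlier propagation steps — is the genuine difficulty of the lemma.
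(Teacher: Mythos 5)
Your proposal is correct and follows essentially the same route as the paper: it propagates boundedness in the identical cascade $\{y_n^{(i)}\}\to\{T_i(y_n^{(i)})\}\to\{w_n^{(i)}\},\{x_n\}\to\{z_n^{(i)}\}\to\{d_n^{(i)}\}$, and handles the conjugate-gradient directions with the same contraction-type induction (using $\beta_n\to 0$ to get $\beta_n\le 1/2$ eventually) that the paper invokes by reference to the proof of Lemma~\ref{L0}. No gaps.
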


\begin{lemma}\label{L2}
	Let $\{x_{n}\}$, $\{y^{(i)}_{n}\}$ $(i\in I)$, $\{w_{n}^{(i+1)}\}$ $(i\in \{0\}\cup I)$, $\{z_{n}^{(i)}\}$ $(i\in I)$ and $\{d_{n}^{(i)}\}$ $(i\in I)$ be sequences generated by Algorithm \ref{algorithm1}. Then the following holds: 
	\item{(a).}
	$\lim\limits_{n\rightarrow\infty}\frac{\|x_{n+1}-x_{n}\|}{\lambda_{n}}=\lim\limits_{n\rightarrow\infty}\|x_{n+1}-x_{n}\|=0.
	$
	\item{(b).} For $\bar{x}\in S$ and $i\in I$, there exist $Q_{1}, Q_{2}\geq 0$ such that
	\begin{eqnarray}  
	\|w^{(i+1)}_{n}-\bar{x}\|^{2}\nonumber&\leq&\|w^{(i)}_{n}-\bar{x}\|^{2}+\theta_{n}Q_{1}+\alpha_{n}Q_{2}-(1-\alpha_{n})\|T_{i}(y^{(i)}_{n})-y^{(i)}_{n}\|^{2}\nonumber\\&&-\|z^{(i)}_{n}-y^{(i)}_{n}\|^{2}+2\lambda_{n}(f_{i}(\bar{x})-f_{i}(y^{(i)}_{n}))+2\lambda_{n}\langle  d^{(i)}_{n+1},y^{(i)}_{n}-\bar{x}\rangle.\nonumber 
	\end{eqnarray}
\end{lemma}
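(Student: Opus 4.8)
The plan is to treat the two parts separately, establishing (b) first by a direct energy estimate along the inner loop and then (a) by a contraction-type recursion that feeds into Proposition \ref{P5}. For (b), I would fix $\bar{x}\in S$, so that $T_{i}(\bar{x})=\bar{x}$, and start from the Halpern update $w^{(i+1)}_{n}=\alpha_{n}u^{(i)}+(1-\alpha_{n})T_{i}(y^{(i)}_{n})$. Applying the convex-combination identity $\|\alpha a+(1-\alpha)b\|^{2}=\alpha\|a\|^{2}+(1-\alpha)\|b\|^{2}-\alpha(1-\alpha)\|a-b\|^{2}$ with $a=u^{(i)}-\bar{x}$ and $b=T_{i}(y^{(i)}_{n})-\bar{x}$, discarding the nonpositive cross term, and invoking the firm nonexpansiveness of $T_{i}$ (Assumption (A3)) in the form $\|T_{i}(y^{(i)}_{n})-\bar{x}\|^{2}\leq\|y^{(i)}_{n}-\bar{x}\|^{2}-\|y^{(i)}_{n}-T_{i}(y^{(i)}_{n})\|^{2}$, I obtain $\|w^{(i+1)}_{n}-\bar{x}\|^{2}\leq\alpha_{n}\|u^{(i)}-\bar{x}\|^{2}+(1-\alpha_{n})\|y^{(i)}_{n}-\bar{x}\|^{2}-(1-\alpha_{n})\|T_{i}(y^{(i)}_{n})-y^{(i)}_{n}\|^{2}$.

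The next step is to control $\|y^{(i)}_{n}-\bar{x}\|^{2}$ through the proximal step. Writing $y^{(i)}_{n}=\mathrm{prox}_{\lambda_{n}f_{i}}(z^{(i)}_{n}+\lambda_{n}d^{(i)}_{n+1})$ and using Proposition \ref{P2}(b), there is $g\in\partial f_{i}(y^{(i)}_{n})$ with $\lambda_{n}g=z^{(i)}_{n}+\lambda_{n}d^{(i)}_{n+1}-y^{(i)}_{n}$; inserting this into the subgradient inequality $f_{i}(\bar{x})\geq f_{i}(y^{(i)}_{n})+\langle g,\bar{x}-y^{(i)}_{n}\rangle$ together with the polarization $2\langle z^{(i)}_{n}-y^{(i)}_{n},\bar{x}-y^{(i)}_{n}\rangle=\|z^{(i)}_{n}-y^{(i)}_{n}\|^{2}+\|y^{(i)}_{n}-\bar{x}\|^{2}-\|z^{(i)}_{n}-\bar{x}\|^{2}$ yields $\|y^{(i)}_{n}-\bar{x}\|^{2}\leq\|z^{(i)}_{n}-\bar{x}\|^{2}-\|z^{(i)}_{n}-y^{(i)}_{n}\|^{2}+2\lambda_{n}(f_{i}(\bar{x})-f_{i}(y^{(i)}_{n}))+2\lambda_{n}\langle d^{(i)}_{n+1},y^{(i)}_{n}-\bar{x}\rangle$. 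Finally, expanding $z^{(i)}_{n}-\bar{x}=(w^{(i)}_{n}-\bar{x})+\theta_{n}(w^{(i)}_{n}-w^{(i)}_{n-1})$ and using $\theta_{n}^{2}\leq\theta_{n}$ gives $\|z^{(i)}_{n}-\bar{x}\|^{2}\leq\|w^{(i)}_{n}-\bar{x}\|^{2}+\theta_{n}Q_{1}$, where $Q_{1}$ collects the bounded quantities supplied by Lemma \ref{L1}. Substituting the three estimates and converting every residual $(1-\alpha_{n})$ coefficient into a full coefficient, by absorbing each leftover $\alpha_{n}\times(\text{bounded})$ term into $\alpha_{n}Q_{2}$, produces exactly the stated inequality; the boundedness of $\{y^{(i)}_{n}\}$, $\{z^{(i)}_{n}\}$, $\{d^{(i)}_{n}\}$ and of $f_{i}$ on bounded sets is precisely what makes $Q_{1},Q_{2}$ finite.

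For (a) the plan is to derive a one-step contraction for $\|x_{n+1}-x_{n}\|$ and then pass to the ratio. Comparing the inner loop at indices $n$ and $n-1$ agent by agent, the Halpern step gives $\|w^{(i+1)}_{n}-w^{(i+1)}_{n-1}\|\leq(1-\alpha_{n})\|T_{i}(y^{(i)}_{n})-T_{i}(y^{(i)}_{n-1})\|+|\alpha_{n}-\alpha_{n-1}|\,\|u^{(i)}-T_{i}(y^{(i)}_{n-1})\|$, and nonexpansiveness of $T_{i}$ reduces this to a bound on $\|y^{(i)}_{n}-y^{(i)}_{n-1}\|$. I would estimate the latter by writing the prox argument as $S_{\lambda_{n}}(z^{(i)}_{n})+\lambda_{n}\beta_{n}d^{(i)}_{n}$ with $S_{\lambda}=I-\lambda\nabla h_{i}$ nonexpansive by Proposition \ref{P4}, splitting off the parameter change via the resolvent identity $\|\mathrm{prox}_{\lambda_{n}f_{i}}(b)-\mathrm{prox}_{\lambda_{n-1}f_{i}}(b)\|\leq\frac{|\lambda_{n}-\lambda_{n-1}|}{\lambda_{n}}\|\mathrm{prox}_{\lambda_{n}f_{i}}(b)-b\|$, and then using the inertial definition of $z^{(i)}_{n}$. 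Chaining the $M$ agent inequalities and using $(1-\alpha_{n})^{M}\leq 1-\alpha_{n}$ collapses everything to $\|x_{n+1}-x_{n}\|\leq(1-\alpha_{n})\|x_{n}-x_{n-1}\|+E_{n}$, where $E_{n}$ is a sum of terms proportional to $|\alpha_{n}-\alpha_{n-1}|$, $\theta_{n}$, $\theta_{n-1}$, $\lambda_{n}\beta_{n}$, $|\lambda_{n}-\lambda_{n-1}|$ and $|\lambda_{n}-\lambda_{n-1}|/\lambda_{n}$, each multiplied by a Lemma \ref{L1} bound. Dividing by $\lambda_{n}$, rewriting $(1-\alpha_{n})\lambda_{n-1}/\lambda_{n}=1-\alpha_{n}+\alpha_{n}\gamma_{n}$, and checking that $E_{n}/\lambda_{n}=\alpha_{n}\delta_{n}$, the Conditions (C2)--(C7) are exactly what force $\gamma_{n},\delta_{n}\to0$; Proposition \ref{P5} (with $\sum\alpha_{n}=\infty$ from (C1)) then yields $\|x_{n+1}-x_{n}\|/\lambda_{n}\to0$, and since $\lambda_{n}\leq\lambda_{1}$ is bounded, the second limit $\|x_{n+1}-x_{n}\|\to0$ follows at once.

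The main obstacle is entirely in (a): correctly assembling the recursion so that, after division by $\lambda_{n}$, each residual is dominated by $\alpha_{n}$ and matched to precisely one of the technical Conditions (C2)--(C7). The delicate point is that the inertial extrapolation inflates the per-step coefficient from $1-\alpha_{n}$ to roughly $(1-\alpha_{n})(1+\theta_{n})$, so preserving a genuine contraction after the division requires $\theta_{n}$ to be negligible against $\alpha_{n}\lambda_{n}$, which is exactly (C5); the changing step parameter contributes the $\lambda_{n-1}/\lambda_{n}$ correction controlled by (C2) and (C6), the normalization mismatch is absorbed by (C3), and the conjugate-gradient residual $\lambda_{n}\beta_{n}$ by (C7), with the second-order inertial remainder $\theta_{n-1}\|w^{(i)}_{n-1}-w^{(i)}_{n-2}\|$ folded into $\alpha_{n}\delta_{n}$ as well. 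By contrast, (b) is essentially bookkeeping once the firm-nonexpansiveness and prox-characterization estimates are in place, provided the boundedness from Lemma \ref{L1} is available to define $Q_{1}$ and $Q_{2}$.
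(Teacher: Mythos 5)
Your proposal is correct and follows essentially the same route as the paper: part (b) via convexity of $\|\cdot\|^{2}$ on the Halpern step, firm nonexpansiveness of $T_{i}$, the subgradient characterization of the proximal step with the polarization identity, and the inertial expansion of $z^{(i)}_{n}-\bar{x}$; part (a) via the agent-by-agent quasi-contraction through the intermediate point $\mathrm{prox}_{\lambda_{n}f_{i}}(z^{(i)}_{n-1}+\lambda_{n-1}d^{(i)}_{n})$, telescoping over $i$, dividing by $\lambda_{n}$, and invoking Proposition \ref{P5} under Condition \ref{condition1}. The only cosmetic differences are that you cite the standard prox scaling inequality where the paper rederives it from monotonicity of $\partial f_{i}$, and you iterate the per-agent recursion with $(1-\alpha_{n})^{M}\leq 1-\alpha_{n}$ where the paper sums and telescopes; both yield the same bound.
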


\begin{lemma}\label{L3}
	For the sequences $\{x_{n}\}$, $\{y^{(i)}_{n}\}$ $(i\in I)$, $\{w_{n}^{(i)}\}$ $(i\in I)$ and $\{z_{n}^{(i)}\}$ $(i\in I)$ generated by Algorithm \ref{algorithm1}, we have
	\begin{description}
		\item[(a).]$ \lim\limits_{n\rightarrow\infty}\|z^{(i)}_{n}-y^{(i)}_{n}\|= \lim\limits_{n\rightarrow\infty}\|x_{n}-w^{(i)}_{n}\|=0$ for all $i\in I$;
		\item[(b).]$\lim\limits_{n\rightarrow\infty}\|x_{n}-z^{(i)}_{n}\|=\lim\limits_{n\rightarrow\infty}\|T_{i}(x_{n})-x_{n}\|=0$ for all $i\in I$.
	\end{description}	
\end{lemma}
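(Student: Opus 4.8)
The plan is to begin from the one-step estimate of Lemma~\ref{L2}(b) and sum it over the inner index $i=1,\dots,M$. Because $w_n^{(1)}=x_n$ and $w_n^{(M+1)}=x_{n+1}$, the leading differences $\|w_n^{(i+1)}-\bar x\|^2-\|w_n^{(i)}-\bar x\|^2$ telescope to $\|x_{n+1}-\bar x\|^2-\|x_n-\bar x\|^2$. After rearranging, I would isolate the two nonnegative quantities
\[
\sum_{i=1}^{M}\|z_n^{(i)}-y_n^{(i)}\|^2+(1-\alpha_n)\sum_{i=1}^{M}\|T_i(y_n^{(i)})-y_n^{(i)}\|^2
\]
on the left, with everything else on the right, namely
\[
\|x_n-\bar x\|^2-\|x_{n+1}-\bar x\|^2+M\theta_nQ_1+M\alpha_nQ_2+2\lambda_n\sum_{i=1}^{M}\big(f_i(\bar x)-f_i(y_n^{(i)})\big)+2\lambda_n\sum_{i=1}^{M}\langle d_{n+1}^{(i)},y_n^{(i)}-\bar x\rangle.
\]

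The core of the argument is to show that this right-hand side is a null sequence, term by term. The difference $\|x_n-\bar x\|^2-\|x_{n+1}-\bar x\|^2$ vanishes since its absolute value is at most $\|x_n-x_{n+1}\|\,(\|x_n-\bar x\|+\|x_{n+1}-\bar x\|)$, where $\|x_{n+1}-x_n\|\to 0$ by Lemma~\ref{L2}(a) and the norms stay bounded by Lemma~\ref{L1}; this is precisely the point where Lemma~\ref{L2}(a) lets me argue directly with the limit rather than invoke a summability hypothesis. The terms $M\theta_nQ_1$ and $M\alpha_nQ_2$ vanish because $\theta_n,\alpha_n\to 0$ by Condition~\ref{condition1}. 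The last two terms vanish because $\lambda_n\to 0$ while $f_i(y_n^{(i)})$ stays bounded (continuity of $f_i$ plus boundedness of $\{y_n^{(i)}\}$ from Assumption~\ref{assumption2}/Lemma~\ref{L0}) and $\{d_{n+1}^{(i)}\}$, $\{y_n^{(i)}-\bar x\}$ stay bounded by Lemma~\ref{L1}. Since the left-hand side is a sum of nonnegative terms (for large $n$, $1-\alpha_n>0$ as $\alpha_n\to 0$) dominated by a null sequence, each summand tends to $0$; dividing the Halpern summand by $1-\alpha_n\to 1$ yields $\|z_n^{(i)}-y_n^{(i)}\|\to 0$ and $\|T_i(y_n^{(i)})-y_n^{(i)}\|\to 0$ for every $i\in I$, which is the first half of~(a).

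For the second claim in~(a) I would chain triangle inequalities. From $z_n^{(i)}-w_n^{(i)}=\theta_n\big(w_n^{(i)}-w_{n-1}^{(i)}\big)$ with $\theta_n\to 0$ and $\{w_n^{(i)}\}$ bounded, I get $\|z_n^{(i)}-w_n^{(i)}\|\to 0$, hence $\|w_n^{(i)}-y_n^{(i)}\|\to 0$. From the Halpern step $w_n^{(i+1)}-y_n^{(i)}=\alpha_n\big(u^{(i)}-y_n^{(i)}\big)+(1-\alpha_n)\big(T_i(y_n^{(i)})-y_n^{(i)}\big)$ together with $\alpha_n\to 0$ and $\|T_i(y_n^{(i)})-y_n^{(i)}\|\to 0$, I get $\|w_n^{(i+1)}-y_n^{(i)}\|\to 0$. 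Combining these gives $\|w_n^{(i+1)}-w_n^{(i)}\|\to 0$, and telescoping from $w_n^{(1)}=x_n$ yields $\|x_n-w_n^{(i)}\|\le\sum_{j=1}^{i-1}\|w_n^{(j+1)}-w_n^{(j)}\|\to 0$ for every $i\in I$.

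Part~(b) then follows quickly: $\|x_n-z_n^{(i)}\|\le\|x_n-w_n^{(i)}\|+\|w_n^{(i)}-z_n^{(i)}\|\to 0$, and likewise $\|x_n-y_n^{(i)}\|\le\|x_n-w_n^{(i)}\|+\|w_n^{(i)}-y_n^{(i)}\|\to 0$. Finally, using that $T_i$ is nonexpansive (it is firmly nonexpansive by Assumption~\ref{assumption1}),
\[
\|T_i(x_n)-x_n\|\le\|T_i(x_n)-T_i(y_n^{(i)})\|+\|T_i(y_n^{(i)})-y_n^{(i)}\|+\|y_n^{(i)}-x_n\|\le 2\|x_n-y_n^{(i)}\|+\|T_i(y_n^{(i)})-y_n^{(i)}\|\to 0.
\]
I expect the main obstacle to be the careful bookkeeping needed to prove the right-hand side of the summed inequality is a null sequence—in particular, recognizing that Lemma~\ref{L2}(a) is exactly what makes the telescoped difference $\|x_n-\bar x\|^2-\|x_{n+1}-\bar x\|^2$ vanish in the limit, so that no summability condition on $\{\theta_n\}$ or $\{\alpha_n\}$ is required.
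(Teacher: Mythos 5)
Your proposal is correct and follows essentially the same route as the paper: sum the estimate of Lemma~\ref{L2}(b) over $i$, telescope using $w_n^{(1)}=x_n$ and $w_n^{(M+1)}=x_{n+1}$, drive the right-hand side to zero via Lemma~\ref{L2}(a), Condition~\ref{condition1} and boundedness, and then chain triangle inequalities through the inertial and Halpern steps. The only cosmetic difference is that the paper controls $2\lambda_n(f_i(\bar x)-f_i(y_n^{(i)}))$ by the subgradient inequality at $\bar x$ (which gives exactly the one-sided bound that is actually needed and is safer in infinite dimensions than appealing to ``continuity plus boundedness of $\{y_n^{(i)}\}$''), but this does not change the argument.
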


\begin{theorem}\label{T1} For $\psi$ be given as in MAFPSCOP, i.e., $\psi=\sum_{i=1}^{M}\mathcal{G}_{i}$ where $\mathcal{G}_{i}=f_{i}+h_{i}$, and for the sequence  $\{w_{n}^{(i)}\}$ ($i\in I$) generated by Algorithm \ref{algorithm1} we have the following.
	\begin{description}	
		\item{(a).}	$\limsup_{n\rightarrow\infty}\psi(w_{n}^{(1)})\leq \psi(\bar{x})$ for all $\bar{x}\in S$.
		\item{(b).} Any weak
		sequential cluster point of $\{w_{n}^{(i)}\}$ ($i\in I$) belongs to $\Omega$.
		\item{(c).} If $\psi$ is strictly convex, $\{w_{n}^{(i)}\}$ ($i\in I$) weakly converges to a point in $\Omega$.
	\end{description}	
\end{theorem}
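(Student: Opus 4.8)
The plan is to reduce all three claims to statements about the single sequence $\{x_n\}=\{w_n^{(1)}\}$. Lemma \ref{L3}(a) gives $\|w_n^{(i)}-x_n\|\to 0$ for every $i\in I$, so $\{w_n^{(i)}\}$ and $\{x_n\}$ share the same weak sequential cluster points, and since each $f_i,h_i$ is convex with full domain, hence locally Lipschitz, and the iterates are bounded (Lemma \ref{L1}), the values $\psi(w_n^{(i)})$ and $\psi(x_n)$ have the same asymptotic behaviour. The engine is the telescoped form of Lemma \ref{L2}(b): summing that inequality over $i=1,\dots,M$ and using $w_n^{(1)}=x_n$, $w_n^{(M+1)}=x_{n+1}$ collapses the boundary terms to $\|x_{n+1}-\bar x\|^2$ and $\|x_n-\bar x\|^2$.

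For (a) I would, after summing, discard the nonnegative terms $(1-\alpha_n)\|T_i(y_n^{(i)})-y_n^{(i)}\|^2$ and $\|z_n^{(i)}-y_n^{(i)}\|^2$, and then convert the conjugate-direction term. Writing $d_{n+1}^{(i)}=-\nabla h_i(z_n^{(i)})+\beta_n d_n^{(i)}$ and using convexity of $h_i$ in the form $\langle \nabla h_i(z_n^{(i)}),\bar x - z_n^{(i)}\rangle\le h_i(\bar x)-h_i(z_n^{(i)})$, the product $\langle d_{n+1}^{(i)},y_n^{(i)}-\bar x\rangle$ splits into $h_i(\bar x)-h_i(z_n^{(i)})$, plus a remainder bounded by $\|\nabla h_i(z_n^{(i)})\|\,\|y_n^{(i)}-z_n^{(i)}\|\to 0$ (Lemma \ref{L3}(a) and boundedness), plus $\beta_n\langle d_n^{(i)},y_n^{(i)}-\bar x\rangle\to 0$ (since $\beta_n\to 0$). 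Combining with the $f_i$ terms and using $\|y_n^{(i)}-x_n\|\to 0$, $\|z_n^{(i)}-x_n\|\to 0$ with local Lipschitz continuity, the bracket assembles into $\psi(\bar x)-\psi(x_n)+\delta_n$ with $\delta_n\to 0$, giving
\[
\psi(x_n)-\psi(\bar x)\le \frac{\|x_n-\bar x\|^2-\|x_{n+1}-\bar x\|^2}{2\lambda_n}+\frac{M\theta_n Q_1}{2\lambda_n}+\frac{M\alpha_n Q_2}{2\lambda_n}+\delta_n .
\]
Taking $\limsup$, the middle terms vanish by $\alpha_n/\lambda_n\to 0$ (Condition \textbf{(C4)}) and by $\theta_n/\lambda_n\to 0$ (from \textbf{(C5)}, since $\alpha_{n+1}\le 1$ and $\{\lambda_n\}$ decreases). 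The first term is the delicate one: I would write $\|x_n-\bar x\|^2-\|x_{n+1}-\bar x\|^2=\langle x_n-x_{n+1},\,x_n+x_{n+1}-2\bar x\rangle$ and bound it by $\|x_{n+1}-x_n\|/\lambda_n$ times a bounded factor, which $\to 0$ by Lemma \ref{L2}(a). Hence $\limsup_n\psi(x_n)\le\psi(\bar x)$ for every $\bar x\in S$.

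For (b), let $x_{n_k}\rightharpoonup x^\ast$. From Lemma \ref{L3}(b), $\|T_i(x_n)-x_n\|\to 0$ for each $i$; since $T_i$ is nonexpansive and $H$ satisfies Opial's condition, demiclosedness of $I-T_i$ at $0$ forces $x^\ast\in FixT_i$ for all $i$, so $x^\ast\in S$. As $\psi$ is convex and continuous it is weakly lower semicontinuous (Proposition \ref{P3}), so for $\bar x\in\Omega$, using part (a), $\psi(x^\ast)\le\liminf_k\psi(x_{n_k})\le\limsup_n\psi(x_n)\le\psi(\bar x)=\min_{x\in S}\psi(x)$; with $x^\ast\in S$ this gives $x^\ast\in\Omega$.

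For (c), $S$ is closed and convex (fixed-point sets of nonexpansive maps are convex), so strict convexity of $\psi$ makes the minimizer over $S$ unique, whence $\Omega=\{x^\ast\}$. By part (b) every weak cluster point of the bounded sequence $\{x_n\}$ equals $x^\ast$, and a bounded sequence in a Hilbert space with a single weak cluster point converges weakly to it; transferring through $\|w_n^{(i)}-x_n\|\to 0$ yields $w_n^{(i)}\rightharpoonup x^\ast\in\Omega$. The main obstacle throughout is the bookkeeping in (a): converting the conjugate-gradient direction into function-value differences and checking that every residual is $o(1)$ after division by $2\lambda_n$, for which the rate conditions \textbf{(C4)}, \textbf{(C5)} and the displacement estimate of Lemma \ref{L2}(a) are exactly what is needed.
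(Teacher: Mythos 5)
Your proposal is correct and follows essentially the same route as the paper: telescoping Lemma \ref{L2}(b) over $i$, converting the conjugate-direction term into $h_i$-value differences via convexity, dividing by $2\lambda_n$ and killing each residual with Lemma \ref{L2}(a), Lemma \ref{L3} and Conditions \textbf{(C4)}--\textbf{(C5)}, then the Opial/demiclosedness argument for (b) and the unique-weak-cluster-point argument for (c). The only cosmetic difference is that you invoke local Lipschitz continuity to compare $\psi(w_n^{(i)})$ with $\psi(x_n)$, which is not even needed for (a) since $w_n^{(1)}=x_n$ by construction.
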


\begin{proof} $(a).$ Let $\bar{x}\in S=\bigcap_{i=1}^{M}FixT_{i}$. Then from Lemma \ref{L2} $(b)$, we have 
	\begin{eqnarray}\label{eq:25}
	&&\|w^{(i+1)}_{n}-\bar{x}\|^{2}\leq\|w^{(i)}_{n}-\bar{x}\|^{2}+\theta_{n}Q_{1}+\alpha_{n}Q_{2}+2\lambda_{n}(f_{i}(\bar{x})-f_{i}(y^{(i)}_{n}))\nonumber\\&&\hspace{21mm}+2\lambda_{n}\langle  d^{(i)}_{n+1},y^{(i)}_{n}-\bar{x}\rangle
	\end{eqnarray}	
	for each $i\in I$. By summing both side of the inequality (\ref{eq:25}) over $i$ from 1 to $M$ and noting that $w^{(M+1)}_{n}=x_{n+1}$ and $w^{(1)}_{n}=x_{n}$ gives
	\begin{eqnarray}\label{eq:26}
	\|x_{n+1}-\bar{x}\|^{2}\nonumber&\leq&\|x_{n}-\bar{x}\|^{2}+\theta_{n}MQ_{1}+\alpha_{n}MQ_{2}+2\lambda_{n}\sum\limits_{i=1}^{M}(f_{i}(\bar{x})-f_{i}(z_{n}^{(i)}))\nonumber\\&&\hspace{25mm}+2\lambda_{n}\sum\limits_{i=1}^{M}\langle d^{(i)}_{n+1},y_{n}^{(i)}-\bar{x}\rangle.
	\end{eqnarray}
	Using the definition $d^{(i)}_{n+1}=-\nabla h_{i}(z^{(i)}_{n})+\beta_{n}d^{(i)}_{n}$in the algorithm, it holds that
	\begin{eqnarray}\label{eq:27}
	\nonumber&&2\langle d^{(i)}_{n+1},y_{n}^{(i)}-\bar{x}\rangle=2\langle d^{(i)}_{n+1},z_{n}^{(i)}-\bar{x}\rangle+2\langle d^{(i)}_{n+1},y_{n}^{(i)}-z_{n}^{(i)}\rangle\nonumber\\&&\hspace{15mm}=2\langle\nabla h_{i}(z^{(i)}_{n}),\bar{x}-z_{n}^{(i)}\rangle+2\beta_{n}\langle d^{(i)}_{n},z_{n}^{(i)}-\bar{x}\rangle+2\langle d^{(i)}_{n+1},y_{n}^{(i)}-z_{n}^{(i)}\rangle\nonumber\\&&\hspace{15mm}\leq2(h_{i}(\bar{x})-h_{i}(z^{(i)}_{n}))+\beta_{n}Q_{6}+\|y_{n}^{(i)}-z_{n}^{(i)}\|Q_{7}
	\end{eqnarray}
	for $i\in I$, where $Q_{6}=\max_{i\in I}(\sup\{2\langle d^{(i)}_{n},z_{n}^{(i)}-\bar{x}\rangle:n\in\mathbb{N}\})<\infty$ and $Q_{7}=\max_{i\in I}(\sup\{2 \|d^{(i)}_{n+1}\|:n\in\mathbb{N}\})<\infty$.
	Hence, from (\ref{eq:26}), (\ref{eq:27}), $\mathcal{G}_{i}=f_{i}+h_{i}$ and $\psi=\sum_{i=1}^{M}\mathcal{G}_{i}$ it holds
	\begin{eqnarray}\label{eq:28}
	\nonumber&&\|x_{n+1}-\bar{x}\|^{2}\leq\|x_{n}-\bar{x}\|^{2}+\theta_{n}MQ_{1}+\alpha_{n}MQ_{2}+2\lambda_{n}\sum\limits_{i=1}^{M}(f_{i}(\bar{x})-f_{i}(y_{n}^{(i)}))\nonumber\\&&\hspace{15mm}+2\lambda_{n}\sum\limits_{i=1}^{M}(h_{i}(\bar{x})-h_{i}(z^{(i)}_{n}))+\beta_{n}\lambda_{n}MQ_{6}+\lambda_{n}Q_{7}\sum\limits_{i=1}^{M}\|y_{n}^{(i)}-z_{n}^{(i)}\|\nonumber\\&&\hspace{6mm}=\|x_{n}-\bar{x}\|^{2}+\theta_{n}MQ_{1}+\alpha_{n}MQ_{2}+\beta_{n}\lambda_{n}MQ_{6}+\lambda_{n}Q_{7}\sum\limits_{i=1}^{M}\|y_{n}^{(i)}-z_{n}^{(i)}\|\nonumber\\&&\hspace{20mm}+2\lambda_{n}\sum\limits_{i=1}^{M}(\mathcal{G}_{i}(\bar{x})-\mathcal{G}_{i}(x_{n}))+2\lambda_{n}\sum\limits_{i=1}^{M}(f_{i}(x_{n})-f_{i}(y^{(i)}_{n}))\nonumber\\&&\hspace{20mm}+2\lambda_{n}\sum\limits_{i=1}^{M}(h_{i}(x_{n})-h_{i}(y^{(i)}_{n}))+2\lambda_{n}\sum\limits_{i=1}^{M}(h_{i}(y^{(i)}_{n})-h_{i}(z^{(i)}_{n}))\nonumber\\&&\hspace{6mm}=\|x_{n}-\bar{x}\|^{2}+\theta_{n}MQ_{1}+\alpha_{n}MQ_{2}+\beta_{n}\lambda_{n}MQ_{6}+\lambda_{n}Q_{7}\sum\limits_{i=1}^{M}\|y_{n}^{(i)}-z_{n}^{(i)}\|\nonumber\\&&\hspace{20mm}+2\lambda_{n}(\psi(\bar{x})-\psi(x_{n}))+2\lambda_{n}\sum\limits_{i=1}^{M}(f_{i}(x_{n})-f_{i}(y^{(i)}_{n}))\\&&\hspace{20mm}+2\lambda_{n}\sum\limits_{i=1}^{M}(h_{i}(x_{n})-h_{i}(y^{(i)}_{n}))+2\lambda_{n}\sum\limits_{i=1}^{M}(h_{i}(y^{(i)}_{n})-h_{i}(z^{(i)}_{n})).\nonumber
	\end{eqnarray}	
	Moreover, Assumption \ref{assumption1} (A1), the definition of $\partial f_{i}$ and Proposition \ref{L1} $(d)$ guarantees that there is $r_{n}^{(i)}\in\partial f_{i}(y_{n}^{(i)})$ such that 
	$f_{i}(x_{n})-f_{i}(y_{n}^{(i)})\leq\langle x_{n}-y_{n}^{(i)},r_{n}^{(i)}\rangle,$
	and which then implies by Lemma \ref{L2}, we have
	$2(f_{i}(x_{n})-f_{i}(y_{n}^{(i)}))\leq Q_{8}\|x_{n}-y_{n}^{(i)}\|,$ where $Q_{8}=\max_{i\in I}(\sup\{2\|r_{n}^{(i)}\|:n\in\mathbb{N}\})<\infty$. In addition, since $\partial h_{i}(x_{n})=\{\nabla h_{i}(x_{n})\}$ and  $\partial h_{i}(y^{(i)}_{n})=\{\nabla h_{i}(y^{(i)}_{n})\}$ (obtained as a result of Assumption \ref{assumption1} $(A2)$) and the sequences $\{\nabla h_{i}(x_{n})\}$ and $\{\nabla h_{i}(y^{(i)}_{n})\}$ are bounded (obtained as a result of $\{x_{n}\}$ and $\{y^{(i)}_{n}\}$ are bounded from Lemma \ref{L2} and $\nabla h_{i}$ is Lipschitz continuous from Assumption \ref{assumption1} $(A2)$), we obtain $2(h_{i}(x_{n})-h_{i}(y_{n}^{(i)}))\leq2\langle x_{n}-y_{n}^{(i)},\nabla h_{i}(x_{n})\rangle\leq Q_{9}\|x_{n}-y_{n}^{(i)}\|$  and   $2(h_{i}(y_{n}^{(i)})-h_{i}(z_{n}^{(i)}))\leq2\langle y_{n}^{(i)}-z_{n}^{(i)},\nabla h_{i}(y_{n}^{(i)})\rangle\leq Q_{10}\|y_{n}^{(i)}-z_{n}^{(i)}\|,$ where $Q_{9}=\max_{i\in I}(\sup\{2\|\nabla h_{i}(x_{n})\|:n\in\mathbb{N}\})<\infty$ and $Q_{10}=\max_{i\in I}(\sup\{2\|\nabla h_{i}(y_{n}^{(i)})\|:n\in\mathbb{N}\})<\infty$. Therefore, (\ref{eq:28}) gives
	\begin{eqnarray}
	\|x_{n+1}-\bar{x}\|^{2}\nonumber&\leq&\|x_{n}-\bar{x}\|^{2}+\theta_{n}MQ_{1}+\alpha_{n}MQ_{2}+\beta_{n}\lambda_{n}MQ_{6}+2\lambda_{n}(\psi(\bar{x})-\psi(x_{n}))\nonumber\\&&+\lambda_{n}Q_{11}\sum\limits_{i=1}^{M}\|y_{n}^{(i)}-z_{n}^{(i)}\|+\lambda_{n}Q_{12}\sum\limits_{i=1}^{M}\|x_{n}-y_{n}^{(i)}\|\nonumber,
	\end{eqnarray}	
	where $Q_{11}=Q_{7}+Q_{10}$ and $Q_{12}=Q_{8}+Q_{9}$, and so, by rearranging the terms, we arrive at
	\begin{eqnarray}\label{eq:29}
	2(\psi(x_{n})-\psi(\bar{x}))\nonumber&\leq&\frac{1}{\lambda_{n}}(\|x_{n}-\bar{x}\|^{2}-\|x_{n+1}-\bar{x}\|^{2})+\frac{\theta_{n}}{\lambda_{n}}MQ_{1}+\frac{\alpha_{n}}{\lambda_{n}}MQ_{2}\nonumber\\&&+\beta_{n}MQ_{6}+Q_{11}\sum\limits_{i=1}^{M}\|y_{n}^{(i)}-z_{n}^{(i)}\|+Q_{12}\sum\limits_{i=1}^{M}\|x_{n}-z_{n}^{(i)}\|\nonumber\\&\leq&\frac{\|x_{n}-x_{n+1}\|}{\lambda_{n}}Q_{13}+\frac{\theta_{n}}{\lambda_{n}}MQ_{1}+\frac{\alpha_{n}}{\lambda_{n}}MQ_{2}+\beta_{n}MQ_{6}\nonumber\\&&+Q_{11}\sum\limits_{i=1}^{M}\|y^{(i)}_{n}-z_{n}^{(i)}\|+Q_{12}\sum\limits_{i=1}^{M}\|x_{n}-z_{n}^{(i)}\|,
	\end{eqnarray}	
	where  $Q_{13}=\sup\{\|x_{n}-\bar{x}\|+\|x_{n+1}-\bar{x}\|:n\in\mathbb{N}\}<\infty$.  Hence, (\ref{eq:29}), Lemma \ref{L2} $(a)$, Lemma \ref{L3} $(b)$ and Condition \ref{condition1} ensure that
	$
	\limsup_{n\rightarrow\infty}(\psi(x_{n})-\psi(\bar{x}))\leq 0
	$ for all $\bar{x}\in S$.
	Therefore, 
	\begin{eqnarray}\label{eq:29aa}
	\limsup_{n\rightarrow\infty}\psi(w_{n}^{(1)})=\limsup_{n\rightarrow\infty}\psi(x_{n})\leq \psi(\bar{x}) \mbox{ for all } \bar{x}\in S.
	\end{eqnarray}
	\item{$(b)$.} The boundedness of $\{x_{n}\}$ in the Hilbert space $H$ guarantees the existence of a subsequence $\{x_{n_{l}}\}$ such that $\{x_{n_{l}}\}$ weakly converges to $p\in H$. Assume $p\notin S$. This implies that there exists $i_{0}\in I$ with $p\notin FixT_{i_{0}}$, i.e., $T_{i_{0}}(p)\neq p$. Hence, Opial’s condition, $\lim_{n\rightarrow\infty}\|x_{n}-T_{i_{0}}(x_{n})\|=0$ from Lemma \ref{L3}, and the nonexpansivity of $T_{i_{0}}$ imply that 
	\begin{eqnarray}
	\liminf_{l\rightarrow\infty}\|x_{n_{l}}-p\|\nonumber&<&\liminf_{l\rightarrow\infty}\|x_{n_{l}}-T_{i_{0}}(p)\|\nonumber\\&=&\liminf_{l\rightarrow\infty}\|x_{n_{l}}-T_{i_{0}}(x_{n_{l}})+T_{i_{0}}(x_{n_{l}})-T_{i_{0}}(p)\|\nonumber\\&\leq&\liminf_{l\rightarrow\infty}(\|x_{n_{l}}-T_{i_{0}}(x_{n_{l}})\|+\|T_{i_{0}}(x_{n_{l}})-T_{i_{0}}(p)\|)\nonumber\\&=&\liminf_{l\rightarrow\infty}\|T_{i_{0}}(x_{n_{l}})-T_{i_{0}}(p)\|\leq\liminf_{l\rightarrow\infty}\|x_{n_{l}}-p\|.\nonumber
	\end{eqnarray}
	This is a contradiction, and hence it must be the case that $p\in S$. The (\ref{eq:29aa}),  $x_{n_{l}}\rightharpoonup p\in S$, and the
	convexity and continuity of $\psi$ guarantee that 
	$
	\psi(p)\leq\liminf_{l\rightarrow\infty}\psi(x_{n_{l}})\leq\limsup_{l\rightarrow\infty}\psi(x_{n_{l}})\leq \psi(\bar{x}) \mbox{ for all } \bar{x}\in S.
	$
	Therefore, $p$ the solution of MAFPSCOP, i.e., $p\in\Omega=\big\{\hat{x}\in S: \psi (\hat{x})=\min_{x\in S}\psi (x) \big\}$. This implies also that any weak sequential cluster point
	of $\{x_{n}\}$ is in $\Omega$, and hence utilizing the result $\|x_{n}-w^{(i)}_{n}\|\rightarrow 0$ ($i\in I$) as $n\rightarrow\infty$ from Lemma \ref{L3} $(a)$, we conclude that any weak sequential cluster point of $\{w_{n}^{(i)}\}$ ($i\in I$) is in $\Omega$.
	\item{$(c)$.} Since $\psi:H\rightarrow\mathbb{R}$ is strictly convex, the uniqueness of the solution, denoted by $\bar{x}$, of MAFPSCOP is guaranteed, i.e., $\Omega=\{\bar{x}\}$. Since  $\{x_{n}\}$ is bounded there exists a subsequence $\{x_{n_{l}}\}$ of $\{x_{n}\}$ such that $\{x_{n_{l}}\}$ weakly converges to $p\in H$, and by $(b)$, we know that $p$ is in $\Omega$. Hence, the uniqueness of the solution of MAFPSCOP gives $p=\bar{x}$. This implies that any weakly converging subsequence of $\{x_{n}\}$ weakly converges to $\bar{x}$ with
	$\langle \bar{x},w\rangle =\lim_{l\rightarrow\infty} \langle x_{n_{l}}, w\rangle$, $w\in H$. Therefore, any subsequence of $\{x_{n}\}$ weakly converges to $\bar{x}$. Hence, $\{x_{n}\}$ weakly converges to $\bar{x}$, and this together with the result $\|x_{n}-w^{(i)}_{n}\|\rightarrow 0$ ($i\in I$) as $n\rightarrow\infty$ in Lemma \ref{L3} $(a)$ implies that the sequence $\{w_{n}^{(i)}\}$ ($i\in I$) weakly converges to unique solution point $\bar{x}$ of MAFPSCOP.
\end{proof}
\subsection{\textbf{Distributed Accelerated Parallel Type Algorithm}}\label{subsec2}
In this subsection, we present accelerated Halpern and 
proximal gradient algorithm constructed in a parallel computing manner.
\par\noindent\rule{\textwidth}{1pt}
\begin{alg}\label{algorithm2} (Distributed Accelerated Parallel Type Algorithm)\\
	Let $\{\theta_{n}\}$, $\{\lambda_{n}\}$, $\{\beta_{n}\}$ and $\{\alpha_{n}\}$ be real sequences satisfying Condition \ref{condition1}.  Choose $z_{0},x_{0},x_{1}\in H_{1}$, $u^{(i)}\in H$ $(i\in I)$  and $d^{(i)}_{1}=-\nabla h_{i}(z_{0})$ $(i\in I)$ and follow the following iterative steps for $n=1,2,3,\ldots$.
	\begin{description}
		\item [\textbf{Step 1.}] Find $z_{n}=x_{n}+\theta_{n}(x_{n}-x_{n-1})$
		and compute the following finite loop ($i=1,\ldots,M$).
		
		\[
		\begin{cases}
		\mbox{\textbf{Step 1.1.}   Evaluate }\\
		
		\hspace{25mm} y^{(i)}_{n}=\mbox{prox}_{\lambda_{n}f_{i}}(z_{n}+\lambda_{n}d^{(i)}_{n+1}),\\
		
		\hspace{15mm}\mbox{ where } d^{(i)}_{n+1}=-\nabla h_{i}(z_{n})+\beta_{n}d^{(i)}_{n}.\\
		\mbox{\textbf{Step 1.2.}   Evaluate } w^{(i+1)}_{n}=\alpha_{n}u^{(i)}+(1-\alpha_{n})T_{i}(y^{(i)}_{n}).\\
		\mbox{\textbf{Step 1.3.}   If } i<M, \mbox{ set } i=i+1 \mbox{ and go to \textbf{Step 1.1}.}  \\ \hspace{16mm}\mbox{Otherwise, i.e., if $i=M$, go to \textbf{Step 2}}. 
		\end{cases}
		\]	
		\item [\textbf{Step 2.}] Evaluate $$x_{n+1}=\frac{1}{M}\sum\limits_{i=1}^{M} w^{(i+1)}_{n}.$$
		\item [\textbf{Step 3.}] Set $n:=n+1$ and go to \textbf{Step 1}.
	\end{description}	
\end{alg}
\par\noindent\rule{\textwidth}{1pt}\\\\
We make the following assumption.

\begin{assumption}\label{assumption3} The sequence $\{y_{n}^{(i)}\}$ $(i\in I)$ generated by Algorithm \ref{algorithm2} is a bounded.
\end{assumption}
We now present the convergence analysis of Algorithm \ref{algorithm2} under the Assumptions \ref{assumption1} and  \ref{assumption3}.\\

\begin{lemma}\label{L4} The sequences $\{T_{i}(y^{(i)}_{n})\}$ $(i\in I)$,  $\{x_{n}\}$, $\{w_{n}^{(i)}\}$ $(i\in I)$, $\{z_{n}\}$ and $\{d_{n}^{(i)}\}$ $(i\in I)$ generated by Algorithm \ref{algorithm2} are bounded.
\end{lemma}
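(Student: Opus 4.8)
The plan is to establish boundedness in a cascade, beginning from the hypothesis of Assumption \ref{assumption3} that $\{y_n^{(i)}\}$ $(i\in I)$ is bounded, and propagating boundedness through the update rules of Algorithm \ref{algorithm2} in the order $\{T_i(y_n^{(i)})\}\to\{w_n^{(i+1)}\}\to\{x_n\}\to\{z_n\}\to\{d_n^{(i)}\}$. This mirrors the incremental case of Lemma \ref{L1}; the only structural differences are the single inertial variable $z_n$ shared by all agents and the averaging step that defines $x_{n+1}$.

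First I would fix any $\bar{x}\in S=\bigcap_{i=1}^M FixT_i$, which exists by Assumption \ref{assumption1} (A4). Since each $T_i$ is firmly nonexpansive (Assumption \ref{assumption1} (A3)) and hence nonexpansive, $\|T_i(y_n^{(i)})-\bar{x}\|=\|T_i(y_n^{(i)})-T_i(\bar{x})\|\leq\|y_n^{(i)}-\bar{x}\|$, whose right-hand side is bounded; thus $\{T_i(y_n^{(i)})\}$ is bounded. Then \textbf{Step 1.2} expresses $w_n^{(i+1)}=\alpha_n u^{(i)}+(1-\alpha_n)T_i(y_n^{(i)})$ as a convex combination with $\alpha_n\in(0,1]$, so $\|w_n^{(i+1)}\|\leq\max\{\|u^{(i)}\|,\sup_m\|T_i(y_m^{(i)})\|\}$ and $\{w_n^{(i+1)}\}$ is bounded. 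The averaging in \textbf{Step 2}, $x_{n+1}=\frac1M\sum_{i=1}^M w_n^{(i+1)}$, then bounds $\{x_n\}$ (together with the fixed initial iterates), and $z_n=x_n+\theta_n(x_n-x_{n-1})$ with $\theta_n\in[0,1)$ gives $\|z_n\|\leq\|x_n\|+\|x_n-x_{n-1}\|$, so $\{z_n\}$ is bounded.

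The only genuinely nontrivial step will be the boundedness of $\{d_n^{(i)}\}$, defined recursively by $d_{n+1}^{(i)}=-\nabla h_i(z_n)+\beta_n d_n^{(i)}$. Since $\{z_n\}$ is bounded and $\nabla h_i$ is $(1/L_i)$-Lipschitz continuous (Assumption \ref{assumption1} (A2)), the evaluations $\{\nabla h_i(z_n)\}$ are bounded, say $\|\nabla h_i(z_n)\|\leq G_i$ for all $n$, which yields $\|d_{n+1}^{(i)}\|\leq G_i+\beta_n\|d_n^{(i)}\|$. Because $\{\beta_n\}$ decreases to $0$ (Condition \ref{condition1}), there is an index $N$ with $\beta_n\leq\tfrac12$ for $n\geq N$, so $\|d_{n+1}^{(i)}\|\leq G_i+\tfrac12\|d_n^{(i)}\|$ for $n\geq N$. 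A short induction against the fixed point $2G_i$ of $t\mapsto G_i+\tfrac12 t$ then shows $\|d_n^{(i)}\|\leq\max\{\|d_N^{(i)}\|,2G_i\}$ for $n\geq N$, while the finitely many initial terms are trivially bounded; hence $\{d_n^{(i)}\}$ is bounded.

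I expect this $d$-recursion to be the main obstacle, precisely because $d_{n+1}^{(i)}$ feeds back on $d_n^{(i)}$: the argument hinges on using $\beta_n\to0$ to render the feedback coefficient eventually a genuine contraction factor, after which boundedness is immediate. Every other sequence is controlled by a direct, one-line estimate coming from nonexpansiveness, convexity of the combination weights, or continuity of $\nabla h_i$ on the bounded range of $\{z_n\}$.
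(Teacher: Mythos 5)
Your proof is correct and takes essentially the same route as the paper: the paper omits the proof of Lemma \ref{L4}, referring to the proof of Lemma \ref{L1}, which runs exactly your cascade --- nonexpansiveness of $T_i$ for $\{T_i(y_n^{(i)})\}$, the convex combination for $\{w_n^{(i+1)}\}$, the averaging/identification step for $\{x_n\}$, the inertial definition for $\{z_n\}$, and the induction exploiting $\beta_n\to 0$ (so that eventually $\beta_n\le\tfrac12$) for $\{d_n^{(i)}\}$. Your fixed-point bound $\max\{\|d_N^{(i)}\|,2G_i\}$ is a cosmetic variant of the paper's $\bar G_i$ and is equally valid.
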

\begin{lemma}\label{L5}
	Let $\{x_{n}\}$, $\{y^{(i)}_{n}\}$ $(i\in I)$, $\{w_{n}^{(i+1)}\}$ $(i\in I)$, $\{z_{n}\}$ and $\{d_{n}^{(i)}\}$ $(i\in I)$ be sequences generated by Algorithm \ref{algorithm2}. Then the following holds: 
	\item{$(a)$.} $\lim\limits_{n\rightarrow\infty}\frac{\|x_{n+1}-x_{n}\|}{\lambda_{n}}=\lim\limits_{n\rightarrow\infty}\|x_{n+1}-x_{n}\|=0.
	$
	\item{$(b)$.} For $\bar{x}\in S$ and $i\in I$, there exist $D_{1}, D_{2}\geq 0$ such that
	\begin{eqnarray}\|w^{(i+1)}_{n}-\bar{x}\|^{2}\nonumber&\leq&\|x_{n}-\bar{x}\|^{2}+\theta_{n}D_{1}+2\lambda_{n}(f_{i}(\bar{x})-f_{i}(y^{(i)}_{n}))+2\lambda_{n}\langle  d^{(i)}_{n+1},y^{(i)}_{n}-\bar{x}\rangle\nonumber\\&&+\alpha_{n}D_{2}-(1-\alpha_{n})\|T_{i}(y^{(i)}_{n})-y^{(i)}_{n}\|^{2}-\|z_{n}-y^{(i)}_{n}\|^{2}.\nonumber
	\end{eqnarray}	
	
\end{lemma}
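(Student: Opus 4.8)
The plan is to prove the two parts separately: statement $(b)$ is essentially a one-step algebraic estimate, whereas statement $(a)$ is the asymptotic estimate in which Condition \ref{condition1} does the real work, so I would dispatch $(b)$ first. For $(b)$, I would begin from the Halpern step $w^{(i+1)}_n=\alpha_n u^{(i)}+(1-\alpha_n)T_i(y^{(i)}_n)$ and use convexity of $\|\cdot\|^2$ to get $\|w^{(i+1)}_n-\bar x\|^2\le\alpha_n\|u^{(i)}-\bar x\|^2+(1-\alpha_n)\|T_i(y^{(i)}_n)-\bar x\|^2$. Because $\bar x\in S$ is a common fixed point, $T_i\bar x=\bar x$, so firm nonexpansiveness (A3) gives $\|T_i(y^{(i)}_n)-\bar x\|^2\le\|y^{(i)}_n-\bar x\|^2-\|T_i(y^{(i)}_n)-y^{(i)}_n\|^2$. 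I would set $D_2:=\max_{i\in I}\|u^{(i)}-\bar x\|^2$ to absorb the first term into $\alpha_n D_2$, bound $(1-\alpha_n)\|y^{(i)}_n-\bar x\|^2\le\|y^{(i)}_n-\bar x\|^2$, and keep the $-(1-\alpha_n)\|T_i(y^{(i)}_n)-y^{(i)}_n\|^2$ term intact.

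Then I would expand around $z_n$ via $\|y^{(i)}_n-\bar x\|^2=\|z_n-\bar x\|^2-\|z_n-y^{(i)}_n\|^2+2\langle y^{(i)}_n-z_n,y^{(i)}_n-\bar x\rangle$. Proposition \ref{P2}$(b)$ applied to $y^{(i)}_n=\mathrm{prox}_{\lambda_n f_i}(z_n+\lambda_n d^{(i)}_{n+1})$ produces $g^{(i)}_n\in\partial f_i(y^{(i)}_n)$ with $y^{(i)}_n-z_n=\lambda_n(d^{(i)}_{n+1}-g^{(i)}_n)$, which turns the inner product into $2\lambda_n\langle d^{(i)}_{n+1},y^{(i)}_n-\bar x\rangle-2\lambda_n\langle g^{(i)}_n,y^{(i)}_n-\bar x\rangle$, and the subgradient inequality bounds the last part by $2\lambda_n(f_i(\bar x)-f_i(y^{(i)}_n))$. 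Finally $z_n=x_n+\theta_n(x_n-x_{n-1})$ together with boundedness of $\{x_n\}$ (Lemma \ref{L4}) gives $\|z_n-\bar x\|^2\le\|x_n-\bar x\|^2+\theta_n D_1$ after estimating the inertial cross and quadratic terms by a constant (using $\theta_n^2\le\theta_n$); substituting back yields the claimed inequality exactly.

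For $(a)$, the plan is to build a contractive recursion for $b_n:=\|x_{n+1}-x_n\|$. From $x_{n+1}=\frac1M\sum_{i=1}^M w^{(i+1)}_n$ I would write $x_{n+1}-x_n=\frac1M\sum_i(w^{(i+1)}_n-w^{(i+1)}_{n-1})$ and, using the Halpern form and nonexpansiveness of $T_i$, get $\|w^{(i+1)}_n-w^{(i+1)}_{n-1}\|\le|\alpha_n-\alpha_{n-1}|\,\|u^{(i)}-T_i(y^{(i)}_{n-1})\|+(1-\alpha_n)\|y^{(i)}_n-y^{(i)}_{n-1}\|$. The crucial reduction handles the variable proximal parameter: writing the step-$(n-1)$ characterization as $g^{(i)}_{n-1}\in\partial f_i(y^{(i)}_{n-1})$ shows $y^{(i)}_{n-1}=\mathrm{prox}_{\lambda_n f_i}(y^{(i)}_{n-1}+\lambda_n g^{(i)}_{n-1})$, so nonexpansiveness of $\mathrm{prox}_{\lambda_n f_i}$ gives $\|y^{(i)}_n-y^{(i)}_{n-1}\|\le\|a^{(i)}_n-a^{(i)}_{n-1}\|+|\lambda_n-\lambda_{n-1}|\,\|g^{(i)}_{n-1}\|$ with $a^{(i)}_n:=z_n+\lambda_n d^{(i)}_{n+1}$. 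Expanding $a^{(i)}_n-a^{(i)}_{n-1}$ through $z_n-z_{n-1}$ (hence $x_n-x_{n-1}$ and the inertial terms) and through the recursion $d^{(i)}_{n+1}=-\nabla h_i(z_n)+\beta_n d^{(i)}_n$, and invoking boundedness of $\{d^{(i)}_n\}$, $\{g^{(i)}_n\}$ (Proposition \ref{P2}$(d)$) and $\{T_i(y^{(i)}_{n-1})\}$ (Lemma \ref{L4}), assembles a recursion $b_n\le(1-\alpha_n)b_{n-1}+E_n$.

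The main obstacle will be to organize $E_n$ so that, after dividing by $\lambda_n$, it becomes $\alpha_n\varepsilon_n$ with $\varepsilon_n\to0$, so that Proposition \ref{P5} applies to $c_n:=b_{n-1}/\lambda_{n-1}$. Concretely, I expect the inertial contribution (scaled by $\theta_n/\lambda_n$) to be annihilated by (C5), the step-ratio and $\alpha$-ratio perturbations by (C2) and (C3), the residual step size by (C4), the momentum term by (C7), while (C6) keeps $\lambda_n/\lambda_{n+1}$ bounded so the division is harmless and (C1) supplies $\sum\alpha_n=\infty$. This gives $\|x_{n+1}-x_n\|/\lambda_n\to0$, and since $\lambda_n$ is bounded above, $\|x_{n+1}-x_n\|\to0$ follows. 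The delicate point is exactly this matching of each perturbation in $E_n$ to the single condition that kills it after division by $\lambda_n$; the computation parallels the incremental Lemma \ref{L2}$(a)$, but with the averaging $\frac1M\sum_i$ and the shared inertial point $z_n$ in place of the chained variables $w^{(i)}_n$.
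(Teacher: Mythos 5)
Your proposal is correct and follows the same overall architecture as the paper's Appendix B.2: part $(b)$ is obtained from the Halpern step plus firm nonexpansiveness of $T_{i}$, the proximal characterization of $y^{(i)}_{n}$ combined with the subgradient inequality, and the inertial estimate $\|z_{n}-\bar{x}\|^{2}\leq\|x_{n}-\bar{x}\|^{2}+\theta_{n}D_{1}$; part $(a)$ from a recursion $\|x_{n+1}-x_{n}\|\leq(1-\alpha_{n})\|x_{n}-x_{n-1}\|+E_{n}$ closed by Proposition \ref{P5}. There is one genuine local variation in $(a)$: to compare the proximal points $y^{(i)}_{n}$ and $y^{(i)}_{n-1}$, which carry different scaling parameters, the paper inserts $\bar{y}^{(i)}_{n-1}=\mbox{prox}_{\lambda_{n}f_{i}}(z_{n-1}+\lambda_{n-1}d^{(i)}_{n})$ and controls $\|y^{(i)}_{n-1}-\bar{y}^{(i)}_{n-1}\|$ via monotonicity of $\partial f_{i}$, getting $\frac{|\lambda_{n}-\lambda_{n-1}|}{\lambda_{n-1}}W_{4}$, whereas you rewrite $y^{(i)}_{n-1}=\mbox{prox}_{\lambda_{n}f_{i}}(y^{(i)}_{n-1}+\lambda_{n}g^{(i)}_{n-1})$ with $g^{(i)}_{n-1}\in\partial f_{i}(y^{(i)}_{n-1})$ and apply nonexpansiveness of a single proximal operator, getting $|\lambda_{n}-\lambda_{n-1}|\,\|g^{(i)}_{n-1}\|$. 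Both residues are removed by (C2) after division by $\lambda_{n}\alpha_{n}$, and your version is arguably cleaner, though it needs $\sup_{n}\|g^{(i)}_{n-1}\|<\infty$ --- a hypothesis on the same footing as the constants $Q_{8}$ and $W_{4}$ the paper itself takes for granted. The one point you leave dangerously implicit is how the gradient contributions inside $a^{(i)}_{n}-a^{(i)}_{n-1}$ are absorbed: after substituting $d^{(i)}_{n+1}=-\nabla h_{i}(z_{n})+\beta_{n}d^{(i)}_{n}$ you must group the terms as $(z_{n}-\lambda_{n}\nabla h_{i}(z_{n}))-(z_{n-1}-\lambda_{n}\nabla h_{i}(z_{n-1}))$ and invoke Proposition \ref{P4} (nonexpansiveness of $I-\lambda_{n}\nabla h_{i}$, valid precisely because $\lambda_{n}\leq 2\min_{i}L_{i}$) to bound this block by $\|z_{n}-z_{n-1}\|$ with coefficient exactly $1$. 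Using only Lipschitz continuity of $\nabla h_{i}$ would yield the coefficient $1+\lambda_{n}/L_{i}$, and since (C4) forces $\alpha_{n}=o(\lambda_{n})$ this exceeds $(1-\alpha_{n})^{-1}$ eventually, destroying the contraction; likewise, bounding $\lambda_{n}d^{(i)}_{n+1}-\lambda_{n-1}d^{(i)}_{n}$ crudely by $(\lambda_{n}+\lambda_{n-1})\sup_{n}\|d^{(i)}_{n}\|$ leaves an $O(1)$ residue after dividing by $\lambda_{n}$ that no condition removes, so the momentum leftovers must retain the factor $\beta_{n}$ that (C7) is designed to kill. With that grouping made explicit, your plan goes through exactly as the paper's does.
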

\begin{lemma}\label{L6}
	For sequences $\{T_{i}(x^{(i)}_{n})\}$ $(i\in I)$,  $\{x_{n}\}$, $\{z_{n}\}$ and $\{y_{n}^{(i)}\}$ $(i\in I)$ generated by Algorithm \ref{algorithm2}, we have for each $i\in I$ that
	$$ \lim\limits_{n\rightarrow\infty}\|z_{n}-y^{(i)}_{n}\|= \lim\limits_{n\rightarrow\infty}\|x_{n}-y^{(i)}_{n}\|=\lim\limits_{n\rightarrow\infty}\|T_{i}(x_{n})-x_{n}\|=0.$$
\end{lemma}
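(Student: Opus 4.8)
The plan is to aggregate the per-agent estimate of Lemma~\ref{L5}$(b)$ over all $M$ agents, exploiting the averaging structure $x_{n+1}=\frac1M\sum_{i=1}^{M}w_n^{(i+1)}$ of the parallel update. Since $\|\cdot\|^2$ is convex, $\|x_{n+1}-\bar{x}\|^2\le\frac1M\sum_{i=1}^{M}\|w_n^{(i+1)}-\bar{x}\|^2$, so summing the inequality of Lemma~\ref{L5}$(b)$ over $i$, dividing by $M$, and moving the two nonnegative quadratic terms to the left, I would arrive at
\begin{align*}
\frac1M\sum_{i=1}^{M}\|z_n-y_n^{(i)}\|^2+\frac{1-\alpha_n}{M}\sum_{i=1}^{M}\|T_i(y_n^{(i)})-y_n^{(i)}\|^2
&\le \|x_n-\bar{x}\|^2-\|x_{n+1}-\bar{x}\|^2+\theta_n D_1+\alpha_n D_2\\
&\quad+\frac{2\lambda_n}{M}\sum_{i=1}^{M}\big(f_i(\bar{x})-f_i(y_n^{(i)})\big)+\frac{2\lambda_n}{M}\sum_{i=1}^{M}\langle d_{n+1}^{(i)},y_n^{(i)}-\bar{x}\rangle.
\end{align*}

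Next I would show that every term on the right except the telescoping difference tends to $0$. The inner-product term is handled exactly as in the proof of Theorem~\ref{T1}: substituting $d_{n+1}^{(i)}=-\nabla h_i(z_n)+\beta_n d_n^{(i)}$ and using convexity of $h_i$ gives $2\langle d_{n+1}^{(i)},y_n^{(i)}-\bar{x}\rangle\le 2\big(h_i(\bar{x})-h_i(z_n)\big)+\beta_n\widetilde{Q}_6+\|y_n^{(i)}-z_n\|\widetilde{Q}_7$, the constants being finite because $\{d_n^{(i)}\}$ and $\{z_n\}$ are bounded by Lemma~\ref{L4}. Moreover $f_i(\bar{x})-f_i(y_n^{(i)})$ and $h_i(\bar{x})-h_i(z_n)$ are bounded above by constants via the subgradient inequalities at $\bar{x}$ together with boundedness of $\{y_n^{(i)}\}$ and $\{z_n\}$. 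Consequently every term carrying a factor $\lambda_n$, $\theta_n$, $\alpha_n$ or $\beta_n$ vanishes, since all these parameters converge to $0$ by Condition~\ref{condition1}; note that, unlike in Theorem~\ref{T1}, no division by $\lambda_n$ is needed here, so mere boundedness suffices.

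It then remains to control $\|x_n-\bar{x}\|^2-\|x_{n+1}-\bar{x}\|^2$. Factoring the difference of squares and applying the triangle inequality, I would bound it by $Q\|x_{n+1}-x_n\|$ with $Q=\sup_n\big(\|x_n-\bar{x}\|+\|x_{n+1}-\bar{x}\|\big)<\infty$ (finite by Lemma~\ref{L4}), and this vanishes by Lemma~\ref{L5}$(a)$. Hence the entire right-hand side tends to $0$, and since both sums on the left are nonnegative, each is forced to $0$ separately; as each summand is itself nonnegative, for every $i\in I$ we obtain $\|z_n-y_n^{(i)}\|\to 0$ and, using $1-\alpha_n\to 1$, also $\|T_i(y_n^{(i)})-y_n^{(i)}\|\to 0$.

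The remaining two limits follow by elementary manipulation. From $z_n=x_n+\theta_n(x_n-x_{n-1})$ and boundedness of $\{x_n\}$, $\|z_n-x_n\|=\theta_n\|x_n-x_{n-1}\|\to 0$, whence $\|x_n-y_n^{(i)}\|\le\|x_n-z_n\|+\|z_n-y_n^{(i)}\|\to 0$. Finally, since each $T_i$ is firmly nonexpansive, hence nonexpansive, $\|T_i(x_n)-x_n\|\le\|x_n-y_n^{(i)}\|+\|T_i(y_n^{(i)})-y_n^{(i)}\|+\|y_n^{(i)}-x_n\|\to 0$. I expect the main obstacle to lie not in any single deep estimate but in the bookkeeping of the aggregation step—correctly combining the $M$ per-agent inequalities through the average and checking that each residual term genuinely carries a vanishing parameter factor; the telescoping control of $\|x_n-\bar{x}\|^2-\|x_{n+1}-\bar{x}\|^2$ through Lemma~\ref{L5}$(a)$ is the one place where the delicate step-size conditions of Condition~\ref{condition1} are indispensable.
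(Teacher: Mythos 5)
Your proposal is correct and follows essentially the same route as the paper's proof: average the Lemma~\ref{L5}$(b)$ estimate over $i$ via convexity of $\|\cdot\|^{2}$, bound the residual terms by constants times vanishing parameters, telescope $\|x_{n}-\bar{x}\|^{2}-\|x_{n+1}-\bar{x}\|^{2}$ using Lemma~\ref{L5}$(a)$, and then pass from $y_{n}^{(i)}$ to $x_{n}$ via $\|x_{n}-z_{n}\|=\theta_{n}\|x_{n}-x_{n-1}\|$ and nonexpansiveness of $T_{i}$. The only cosmetic difference is that the paper bounds $2\lambda_{n}\langle d_{n+1}^{(i)},y_{n}^{(i)}-\bar{x}\rangle$ directly by $\lambda_{n}D_{4}$ using boundedness, whereas you expand $d_{n+1}^{(i)}$ first — both work, as you correctly note that no division by $\lambda_{n}$ is needed here.
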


\begin{theorem}\label{T2}  For $\psi$ given as in MAFPSCOP, i.e., $\psi=\sum_{i=1}^{M}\mathcal{G}_{i}$ where $\mathcal{G}_{i}=f_{i}+h_{i}$, and for the sequence $\{x_{n}\}$ generated by Algorithm \ref{algorithm2} we have the following.
	\item{$(a)$.}	$\limsup_{n\rightarrow\infty}\psi(x_{n})\leq \psi(\bar{x})$ for all $\bar{x}\in S$.	
	\item{$(b)$.} Any weak
	sequential cluster point of $\{x_{n}\}$  belongs to $\Omega$.
	\item{$(c)$.} If $\psi$ is strictly convex, $\{x_{n}\}$ weakly converges to a point in $\Omega$.
\end{theorem}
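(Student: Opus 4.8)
The plan is to follow the architecture of the proof of Theorem \ref{T1}, adapting it to the parallel averaging structure of Algorithm \ref{algorithm2}. For part $(a)$, I would start from the per-agent estimate in Lemma \ref{L5} $(b)$, which now bounds $\|w^{(i+1)}_{n}-\bar{x}\|^{2}$ directly in terms of $\|x_{n}-\bar{x}\|^{2}$ (rather than in terms of $\|w^{(i)}_{n}-\bar{x}\|^{2}$, as in the incremental case). The crucial structural difference is the averaging step $x_{n+1}=\frac{1}{M}\sum_{i=1}^{M}w^{(i+1)}_{n}$; here I would invoke the convexity of the map $x\mapsto\|x-\bar{x}\|^{2}$ to obtain
$$\|x_{n+1}-\bar{x}\|^{2}=\Big\|\frac{1}{M}\sum_{i=1}^{M}(w^{(i+1)}_{n}-\bar{x})\Big\|^{2}\leq\frac{1}{M}\sum_{i=1}^{M}\|w^{(i+1)}_{n}-\bar{x}\|^{2},$$
and then substitute Lemma \ref{L5} $(b)$ into the right-hand side. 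Summing over $i$ and dividing by $M$ absorbs the $\theta_{n}D_{1}$ and $\alpha_{n}D_{2}$ terms into constants and produces the averaged quantities $\frac{2\lambda_{n}}{M}\sum_{i=1}^{M}(f_{i}(\bar{x})-f_{i}(y^{(i)}_{n}))$ and $\frac{2\lambda_{n}}{M}\sum_{i=1}^{M}\langle d^{(i)}_{n+1},y^{(i)}_{n}-\bar{x}\rangle$.

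Next, exactly as in the derivation of (\ref{eq:27})--(\ref{eq:28}), I would expand $d^{(i)}_{n+1}=-\nabla h_{i}(z_{n})+\beta_{n}d^{(i)}_{n}$, split the inner product into a $z_{n}$-part and a $(y^{(i)}_{n}-z_{n})$-part, and apply the gradient inequality for the convex function $h_{i}$ to replace $\langle\nabla h_{i}(z_{n}),\bar{x}-z_{n}\rangle$ by $h_{i}(\bar{x})-h_{i}(z_{n})$. Combining the $f_{i}$ and $h_{i}$ terms via $\mathcal{G}_{i}=f_{i}+h_{i}$ and $\psi=\sum_{i=1}^{M}\mathcal{G}_{i}$ then yields a leading term $\frac{2\lambda_{n}}{M}(\psi(\bar{x})-\psi(x_{n}))$, together with residuals of the form $f_{i}(x_{n})-f_{i}(y^{(i)}_{n})$, $h_{i}(x_{n})-h_{i}(y^{(i)}_{n})$ and $h_{i}(y^{(i)}_{n})-h_{i}(z_{n})$. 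Using the boundedness of the subgradients of $f_{i}$ (Assumption \ref{assumption1} $(A1)$ and Proposition \ref{P2} $(d)$) and the Lipschitz continuity of $\nabla h_{i}$ (Assumption \ref{assumption1} $(A2)$) over the bounded sets supplied by Lemma \ref{L4}, these residuals are dominated by constant multiples of $\|x_{n}-y^{(i)}_{n}\|$ and $\|z_{n}-y^{(i)}_{n}\|$. Rearranging to isolate $\psi(x_{n})-\psi(\bar{x})$ gives an inequality of the same shape as (\ref{eq:29}), with $\frac{1}{\lambda_{n}}(\|x_{n}-\bar{x}\|^{2}-\|x_{n+1}-\bar{x}\|^{2})$ bounded by $\frac{\|x_{n}-x_{n+1}\|}{\lambda_{n}}$ times a constant. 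Passing to the limsup and invoking Lemma \ref{L5} $(a)$, Lemma \ref{L6} and Condition \ref{condition1} (the same conditions that annihilated each term in Theorem \ref{T1}) delivers $\limsup_{n\rightarrow\infty}\psi(x_{n})\leq\psi(\bar{x})$.

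For parts $(b)$ and $(c)$ I would reuse the arguments of Theorem \ref{T1} $(b)$ and $(c)$ almost verbatim, which is in fact simpler here because the claim is stated directly for $\{x_{n}\}$ and no transfer from $\{w_{n}^{(i)}\}$ is needed. For $(b)$: extract a weakly convergent subsequence $x_{n_{l}}\rightharpoonup p$; if $p\notin S$ then $T_{i_{0}}(p)\neq p$ for some $i_{0}$, and Opial's condition together with $\|T_{i_{0}}(x_{n})-x_{n}\|\rightarrow 0$ from Lemma \ref{L6} and the nonexpansivity of $T_{i_{0}}$ yields the same strict-inequality contradiction, forcing $p\in S$; weak lower semicontinuity of $\psi$ (Proposition \ref{P3}) combined with part $(a)$ then gives $\psi(p)\leq\psi(\bar{x})$ for all $\bar{x}\in S$, so $p\in\Omega$. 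For $(c)$: strict convexity makes $\Omega=\{\bar{x}\}$ a singleton, hence every weak cluster point of the bounded sequence $\{x_{n}\}$ equals $\bar{x}$, and a standard subsequence argument upgrades this to weak convergence of the whole sequence.

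The main obstacle is the bookkeeping in part $(a)$: the averaging in Step 2 of Algorithm \ref{algorithm2} forces the convexity-of-norm estimate above and inserts the factor $\frac{1}{M}$ before every sum, so one must verify that this rescaling is harmless (it is, since $\frac{1}{M}>0$ is constant) and that the \emph{common} extrapolation point $z_{n}$ shared by all agents, unlike the agent-specific $z^{(i)}_{n}$ of Algorithm \ref{algorithm1}, still lets Lemma \ref{L6} drive $\|x_{n}-y^{(i)}_{n}\|$ and $\|z_{n}-y^{(i)}_{n}\|$ to zero so that all residual terms vanish in the limit.
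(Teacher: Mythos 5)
Your proposal is correct and follows essentially the same route as the paper: the convexity-of-$\|\cdot-\bar{x}\|^{2}$ bound on the averaged iterate, substitution of Lemma \ref{L5}~$(b)$, expansion of $d^{(i)}_{n+1}$ with the gradient inequality for $h_{i}$, absorption of the residuals into $\|x_{n}-y^{(i)}_{n}\|$ and $\|z_{n}-y^{(i)}_{n}\|$ terms killed by Lemma \ref{L6}, and the verbatim reuse of the Opial/strict-convexity arguments from Theorem \ref{T1} for parts $(b)$ and $(c)$ (which the paper itself omits as identical). No gaps.
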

\begin{proof}$(a)$. By $x_{n+1}=\frac{1}{M}\sum_{i=1}^{M}w^{(i+1)}_{n}$, $\bar{x}=\frac{1}{M}\sum_{i=1}^{M}\bar{x}$, and convexity of $\|.\|^{2}$, we get $	\|x_{n+1}-\bar{x}\|^{2}\leq\frac{1}{M}\sum_{i=1}^{M}\|w^{(i+1)}_{n}-\bar{x}\|^{2}$, and thus from Lemma \ref{L5} we get 
	\begin{eqnarray}\label{eq:bb26}
	\|x_{n+1}-\bar{x}\|^{2}\nonumber&\leq&\|x_{n}-\bar{x}\|^{2}+\theta_{n}D_{1}+\alpha_{n}D_{2}+\frac{2\lambda_{n}}{M}\sum\limits_{i=1}^{M}(f_{i}(\bar{x})-f_{i}(z_{n}))\nonumber\\&&+\frac{2\lambda_{n}}{M}\sum\limits_{i=1}^{M}\langle d^{(i)}_{n+1},y_{n}^{(i)}-\bar{x}\rangle.
	\end{eqnarray}
	Moreover, from $d^{(i)}_{n+1}=-\nabla h_{i}(z_{n})+\beta_{n}d^{(i)}_{n}$, we get $\langle d^{(i)}_{n+1},y_{n}^{(i)}-\bar{x}\rangle=\langle \nabla h_{i}(z_{n}),\bar{x}-z_{n}\rangle+\beta_{n}\langle d^{(i)}_{n},z_{n}-\bar{x}\rangle+\langle d^{(i)}_{n+1},y_{n}-z_{n}^{(i)}\rangle$, and thus
	\begin{eqnarray}\label{eq:bb27}
	\frac{2}{M}\langle d^{(i)}_{n+1},y_{n}^{(i)}-\bar{x}\rangle\leq\frac{2}{M}(h_{i}(\bar{x})-h_{i}(z_{n}))+\beta_{n}\frac{1}{M}D_{6}+\|y_{n}^{(i)}-z_{n}\|D_{7},
	\end{eqnarray}
	where $D_{6}=\max_{i\in I}(\sup\{2\langle d^{(i)}_{n},z_{n}-\bar{x}\rangle:n\in\mathbb{N}\})<\infty$ and $D_{7}=\max_{i\in I}(\sup\{\frac{2}{M} \|d^{(i)}_{n+1}\|:n\in\mathbb{N}\})<\infty$.
	Therefore, from (\ref{eq:bb26}) and (\ref{eq:bb27}), we arrive at
	\begin{eqnarray}\label{eq:bb28}
	\|x_{n+1}-\bar{x}\|^{2}\nonumber&\leq&\|x_{n}-\bar{x}\|^{2}+\theta_{n}D_{1}+\alpha_{n}D_{2}+\beta_{n}\lambda_{n}D_{6}+\lambda_{n}D_{7}\sum\limits_{i=1}^{M}\|z_{n}-y_{n}^{(i)}\|\nonumber\\&&+\frac{2\lambda_{n}}{M}(\psi(\bar{x})-\psi(x_{n}))+\frac{2\lambda_{n}}{M}\sum\limits_{i=1}^{M}(f_{i}(x_{n})-f_{i}(y^{(i)}_{n}))\\&&+\frac{2\lambda_{n}}{M}\sum\limits_{i=1}^{M}(h_{i}(x_{n})-h_{i}(y^{(i)}_{n}))+\frac{2\lambda_{n}}{M}\sum\limits_{i=1}^{M}(h_{i}(y^{(i)}_{n})-h_{i}(z_{n})).\nonumber
	\end{eqnarray}
	Note that for all $n\in\mathbb{N}$, we have	$\frac{2}{M}(f_{i}(x_{n})-f_{i}(y_{n}^{(i)}))\leq V_{1}\|x_{n}-y_{n}^{(i)}\|$, $\frac{2}{M}(h_{i}(x_{n})-h_{i}(y_{n}^{(i)}))\leq V_{2}\|x_{n}-y_{n}^{(i)}\|$ and $\frac{2}{M}(h_{i}(y_{n}^{(i)})-h_{i}(z_{n}^{(i)}))\leq V_{3}\|y_{n}^{(i)}-z_{n}\|$, where $V_{1}=\max_{i\in I}(\sup\{\frac{2}{M}\|q^{(i)}_{n}\|:n\in\mathbb{N}\})<\infty$ for $q^{(i)}_{n}\in\partial f_{i}(y_{n}^{(i)})$, $V_{2}=\max_{i\in I}(\sup\{\frac{2}{M}\|\nabla h_{i}(x_{n})\|:n\in\mathbb{N}\})$ and $V_{3}=\max_{i\in I}(\sup\{\frac{2}{M}\|\nabla h_{i}(y_{n}^{(i)})\|:n\in\mathbb{N}\})$. Hence, (\ref{eq:bb28}) becomes
	\begin{eqnarray}
	\|x_{n+1}-\bar{x}\|^{2}\nonumber&\leq&\|x_{n}-\bar{x}\|^{2}+\theta_{n}D_{1}+\alpha_{n}D_{2}+\beta_{n}\lambda_{n}D_{6}+\frac{2\lambda_{n}}{M}(\psi(\bar{x})-\psi(x_{n}))\nonumber\\&&+\lambda_{n}D_{8}\sum\limits_{i=1}^{M}\|x_{n}-y_{n}^{(i)}\|+\lambda_{n}D_{9}\sum\limits_{i=1}^{M}\|y^{(i)}_{n}-z_{n}\|,\nonumber
	\end{eqnarray}	
	where $D_{8}=V_{1}+V_{2}$ and $D_{9}=D_{3}+V_{7}$, and this implies that
	\begin{eqnarray}
	\frac{2}{M}(\psi(x_{n})-\psi(\bar{x}))\nonumber&\leq&\frac{\|x_{n}-x_{n+1}\|}{\lambda_{n}}D_{10}+\frac{\theta_{n}}{\lambda_{n}}D_{1}+\frac{\alpha_{n}}{\lambda_{n}}D_{2}+\beta_{n}D_{6}\nonumber\\&&+D_{8}\sum\limits_{i=1}^{M}\|x_{n}-y_{n}^{(i)}\|+D_{9}\sum\limits_{i=1}^{M}\|y^{(i)}_{n}-z_{n}\|,\nonumber
	\end{eqnarray}	
	where $D_{10}=\sup\{\|x_{n}-\bar{x}\|+\|x_{n+1}-\bar{x}\|:n\in\mathbb{N}\}<\infty$. 	
	Hence, 
	$
	\limsup_{n\rightarrow\infty}\psi(x_{n})\leq \psi(\bar{x})$	for all $x\in S$.\\
	The proof of $(b)$ and $(c)$ of the Theorem are omitted as it is similar to the proof of $(b)$ and $(c)$ of Theorem \ref{T1}.
\end{proof}
\section{Numerical result}\label{Sec4}
In this section, we give two examples of MAFPSCOP to show some numerical results and to compare our two algorithms (Algorithm \ref{algorithm1} and \ref{algorithm2}) with proximal methods in \cite{6} and the gradient methods in \cite{5} where Algorithm \ref{algorithm1} is obtained by replacing $y^{(i)}_{n}$ by 
$
y^{(i)}_{n}=P_{X_{i}}(\mbox{prox}_{\lambda_{n}f_{i}}(z^{(i)}_{n}+\lambda_{n}d^{(i)}_{n+1}))$  and Algorithm \ref{algorithm1} is obtained by replacing $y^{(i)}_{n}$ by 
$
y^{(i)}_{n}=P_{X_{i}}(\mbox{prox}_{\lambda_{n}f_{i}}(z_{n}+\lambda_{n}d^{(i)}_{n+1}))$, respectively, for the given bounded, convex and closed subset $X_{i}$ of $H$ with $S=\bigcap_{i=1}^{M}FixT_{i}\subset X_{i}$. We perform the numerical experiment for different real parameters $\alpha_{n}=\frac{10^{-1}}{(n+\hat{a})^{a}}$, $\theta_{n}=\frac{t(10^{-1})}{(n+q)^{b}}$, $\lambda_{n}=\frac{10^{-1}}{(n+\hat{c})^{c}}$ and $\beta_{n}=\frac{10^{-1}}{(n+\hat{d})^{d}}$ where $a$, $\hat{a}$, $b$, $q$, $t$, $c$, $\hat{c}$, $d$ and $\hat{d}$, are real numbers taken so that $\{\alpha_{n}\}$, $\{\theta_{n}\}$, $\{\lambda_{n}\}$ and $\{\beta_{n}\}$ satisfy Condition \ref{condition1}, i.e., $c\in (0,1/2)$, $a\in(c,1-c)$, $\hat{a}\geq\hat{c}>0$, $q\geq\max\{\hat{a}+1,\hat{c}+1\}$, $b>a+c$, $\hat{d}\geq\hat{a}+1$, $d>a$. In the tables we report the results of the average CPU time execution in seconds (CPU(s)) and number of iterations ($n$) averaged over the 5 instances for the stopping criteria $\frac{E(n)}{10\max\{\|x_{1}\|,\|x_{0}\|\}}\leq TOL$ where  $E(n)=\|x_{n}-x_{n-1}\|$.

All codes were written in MATLAB and is performed on HP laptop with Intel(R) Core(TM) i5-7200U CPU @ 250GHz 2.70GHz and RAM 4.00GB.
\begin{example}	[\textbf{Comparison of Algorithm \ref{algorithm1} and \ref{algorithm2}}]
	For $\psi:\mathbb{R}^{3}\rightarrow\mathbb{R}$ consider 
	\begin{eqnarray}	\label{numer1}
	\begin{array}{l@{}l}
	&{}\mbox{minimize } \psi(x)=\sum\limits_{i=1}^{2}(\delta_{C_{i}}(x)+\eta_{i}\|x-a_{i}\|_{2}^{2})\\&\hspace{0mm}\mbox{subject to } x\in\bigcap\limits_{i=1}^{2}\arg\min g_{i},
	\end{array}
	\end{eqnarray}
	where $\delta_{C_{i}}$ $(i\in\{1,2\})$ is an indicator function of a set $C_{i}$ a subset of $\mathbb{R}^{3}$ for
	$C_{1}=\big\{x=(x_{1},x_{2},x_{3})^{T}\in \mathbb{R}^{3}:x_{1}+x_{3}\leq 1\big\}$ and $C_{2}=\big\{x=(x_{1},x_{2},x_{3})^{T}\in \mathbb{R}^{3}:4x_{1}+x_{2}+4x_{3}\leq 4\big\}$,
	$a_{1}=(0,0,0)^{T}$, $a_{2}=(2,3,4)^{T}$, $\eta_{1}=1$, $\eta_{2}=2$,  $g_{1}(x)=\frac{1}{2}\|A(x)\|_{2}^{2}$ for $3\times 3$ matrix $A$ and $A^{T}A$ is symmetric positive definite, and $g_{2}((x_{1},x_{2},x_{3})^{T})=\sum_{k=1}^{3}l(x_{k})$ for $l(x_{k})=\max\{|x_{k}|-1,0\}$ $(k\in\{1,2,3\})$.
	
	We implement our methods to solve the problem (\ref{numer1}) in view of the following two settings of (\ref{numer1}) formulated in the form of MAFPSCOP by setting $\mathcal{G}_{i}(x)=f_{i}(x)+h_{i}(x)$ and $T_{i}=\mbox{prox}_{1 g_{i}}=\mbox{prox}_{g_{i}}$  
	where $f_{i}(x)=\delta_{C_{i}}(x)$ and $h_{i}(x)=\eta_{i}\|x-a_{i}\|_{2}^{2}$ $(i\in I=\{1,2\})$. Note that under this setting,
	$\bigcap_{i=1}^{2}FixT_{i}=\bigcap_{i=1}^{2}\mbox{prox}_{g_{i}}=\bigcap_{i=1}^{2}\arg\min g_{i}$, and each $f_{i}$, $h_{i}$ and $T_{i}$ satisfy Assumption \ref{assumption1}. For $x=(x_{1},x_{2},x_{3})^{T}\in H$, we have $\mbox{prox}_{\lambda f_{i}}(x)=P_{D_{i}}(x)$, $\nabla h_{i}(x)=\eta_{i}x-a_{i}$ and notice that $\nabla h_{i}$ is $(1/L_{i})$-Lipschitz continuous with $L_{i}=1$. Moreover, $T_{1}(x)=\mbox{prox}_{\gamma g_{1}}(x) =(I+\gamma A^{T}A)^{-1}(x)$ and $T_{2}(x)=\mbox{prox}_{ g_{2}}(x)=(\mbox{prox}_{ l}(x_{1}),\mbox{prox}_{ l}(x_{2}),\mbox{prox}_{l}(x_{3}))$ 
	where $$\mbox{prox}_{l}(x_{k})=\left\{
	\begin{array}{lr}
	x_{k},& \mbox{if } |x_{k}|<1 \\
	\mbox{sign}(x_{k}),           & \mbox{if }1\leq|x_{k}|\leq2\\
	\mbox{sign}(x_{k}-1),& \mbox{if } |x_{k}|>2.
	\end{array}
	\right.
	$$
	The function $\psi$ is strictly convex since each $h_{i}$ is strictly convex. Moreover, $S=\bigcap_{i=1}^{2}FixT_{i}\neq\emptyset$ and $\Omega=\{\hat{x}\in S: \psi (\hat{x})=\min_{x\in S}\psi (x) \}\neq\emptyset.$ In the experiment we took the bounded, convex and closed subset $X_{i}$ $(i\in\{1,2\})$ of $\mathbb{R}^{3}$ to be a pyramid defined by $X_{i}=\big\{x\in \mathbb{R}^{3}:x_{1}+x_{2}+x_{3}\leq i\big\}\cap\big\{x\in \mathbb{R}^{3}:-x_{1}+x_{2}+x_{3}\leq i\big\}\cap\{x\in \mathbb{R}^{3}:x_{1}-x_{2}+x_{3}\leq i\big\}\cap\big\{x\in \mathbb{R}^{3}:-x_{1}-x_{2}+x_{3}\leq i\big\}\cap\big\{x\in \mathbb{R}^{3}:x_{3}\geq -i\big\}$. Let $D(n)=E(n)+\|w_{n}^{(1)}-w_{n-1}^{(1)}\|+\|w_{n}^{(2)}-w_{n-1}^{(2)}\|$.  
	
	In this experiment, we study the numerical
	behavior of our methods,  Algorithm \ref{algorithm1} and Algorithm \ref{algorithm2},  on test problem (\ref{numer1}) for different choice of parameters and mainly for $\theta_{n}$. The starting points $x_{0}$ and $x_{1}$ in the algorithms are randomly generated and $u^{(1)}=u^{(2)}=w^{(1)}_{0}=w^{(2)}_{0}=z^{(1)}_{0}=z^{(2)}_{0}=z_{0}=x_{0}$. 
	
	\begin{figure}
		\centering
		\includegraphics[width=1\textwidth]{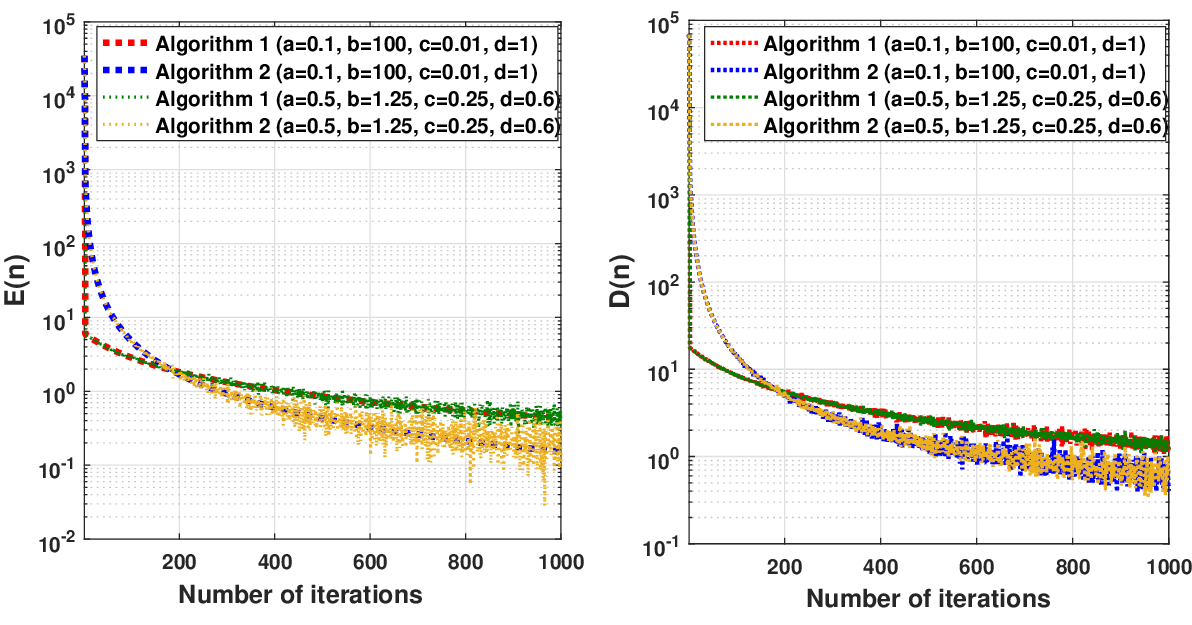}
		\caption{For $\hat{a}=1$, $q=2$,  $\hat{c}=1$, $\hat{d}=2$.}
		\label{Fig 1}
	\end{figure}
	
	\begin{figure}
		\centering
		\includegraphics[width=1\textwidth]{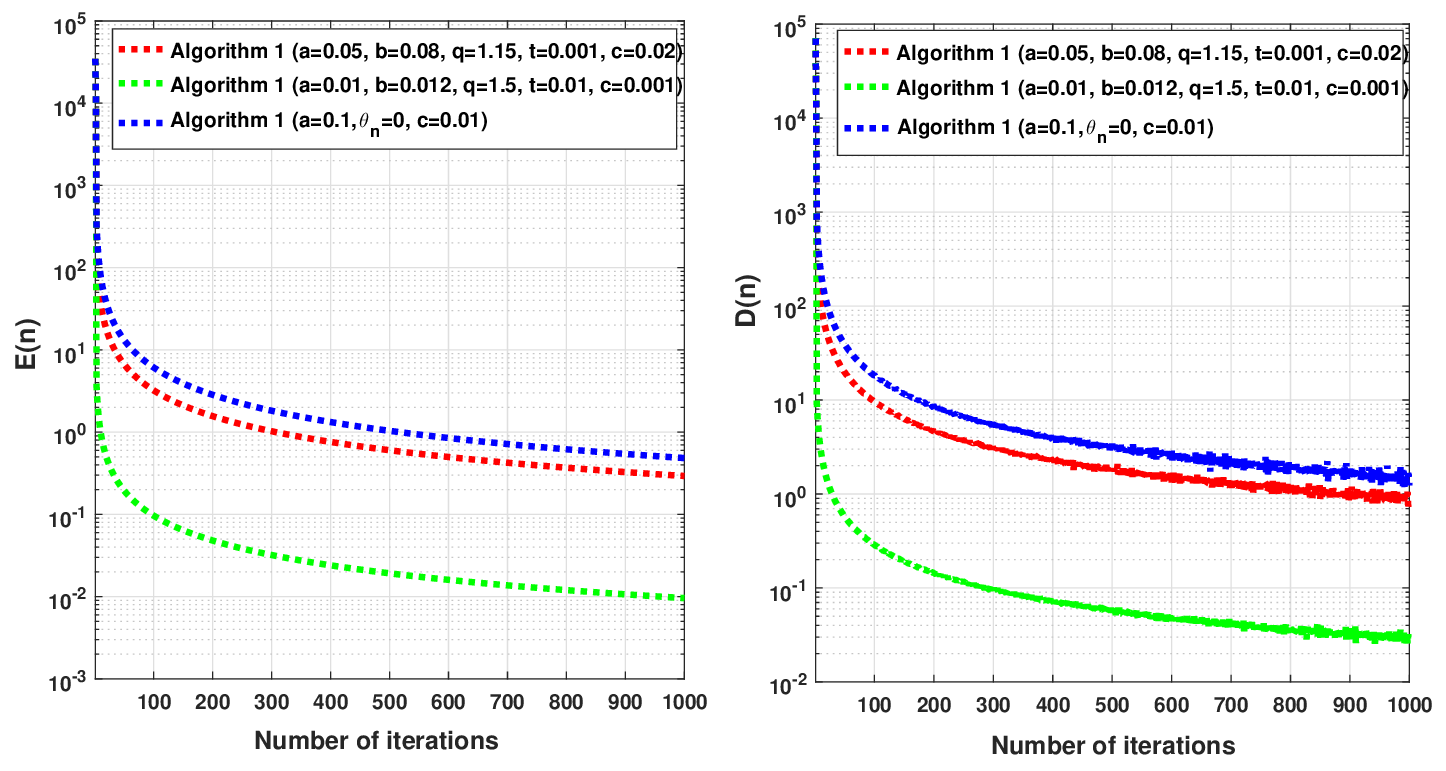}
		\caption{For $\hat{a}=0.1$, $\hat{c}=0.1$, $d=0.5$, $\hat{d}=1.1$.}
		\label{Fig 2}
	\end{figure}
	\begin{figure}
		\centering
		\includegraphics[width=1\textwidth]{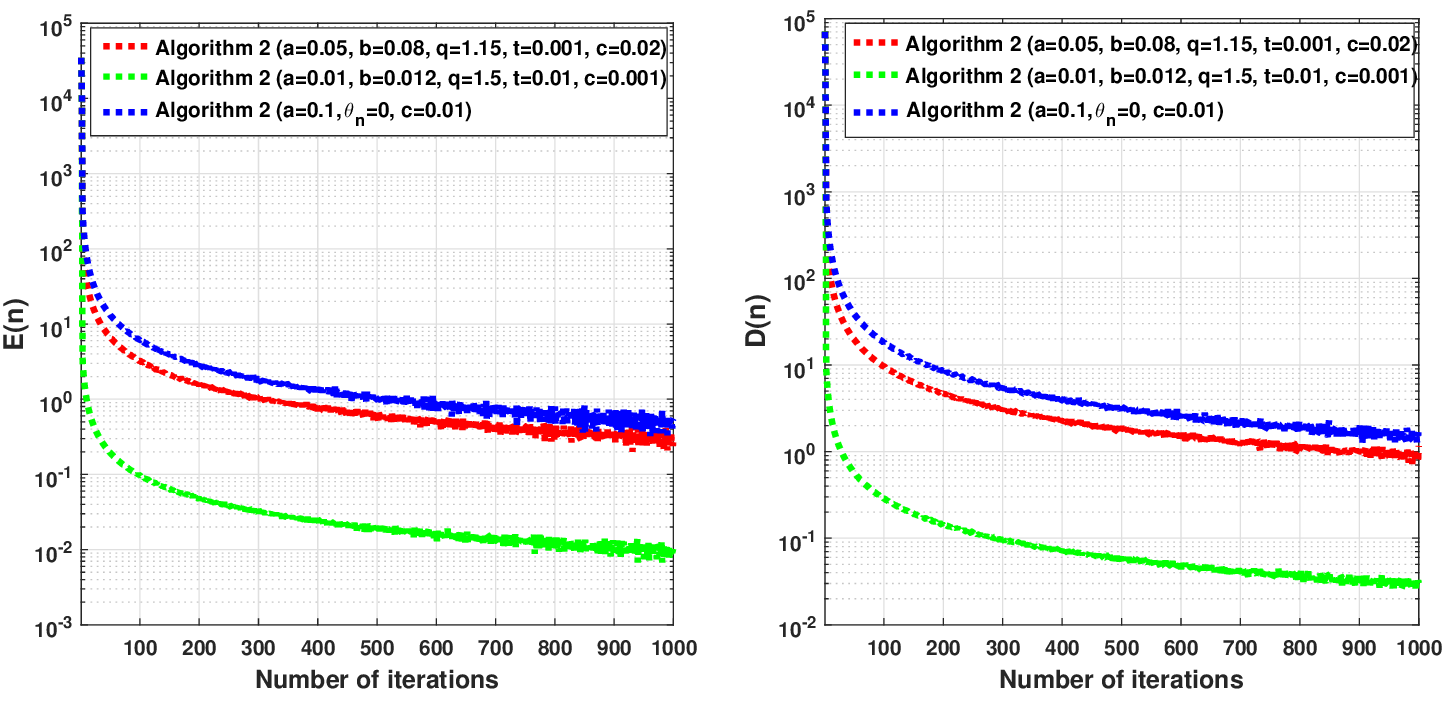}
		\caption{For $\hat{a}=0.1$, $\hat{c}=0.1$, $d=0.5$, $\hat{d}=1.1$.}
		\label{Fig 3}
	\end{figure}
	\begin{table}[]
		\centering
		\caption{For $\theta_{n}=0$ and $\theta_{n}\neq0$ ($t=q=b=2$) where $a=0.5$, $\hat{a}=3$, $c=3$, $\hat{c}=0.01$, $d=3$, $\hat{d}=10$.}\label{Tab 1}
		\begin{tabular}{llllllll}
			\hline
			\multirow{2}{*}{} & \multicolumn{3}{l}{\underline{\hspace{8mm} TOL$=10^{-4}$ \hspace{10mm}}} &  & \multicolumn{3}{l}{\underline{\hspace{8mm} TOL$=10^{-5}$ \hspace{10mm}}}\\ 
			&  $n$   &   CPUt(s)  &   D($n$)  &    & $n$   &   CPUt(s)  &   D($n$)    \\ \hline
			Algorithm 1 ($\theta_{n}\neq0$)	&     244&  0.688580   &  4.353449 &    & 788&   1.152746 &  0.934625\\
			Algorithm 1 ($\theta_{n}=0$)	&   246 &  0.889409  &  4.694379&    & 791 &   2.874446  & 1.060015\\
			Algorithm 2 ($\theta_{n}\neq0$)	&   221  &  0.052896   &  4.600285   &    & 904 &  1.061820  &  0.989179  \\
			Algorithm 2 ($\theta_{n}=0$)	&   221  &   0.092996  &  4.776921   &  & 906 & 2.206435 &       1.023469 \\
			\hline
		\end{tabular}
	\end{table}
	Figures  \ref{Fig 1},  \ref{Fig 2} and \ref{Fig 3} and Table \ref{Tab 1} show that both the proposed algorithms are efficient. From the Figures \ref{Fig 1},  \ref{Fig 2} and \ref{Fig 3}, we can somehow see that the convergence of the algorithms is faster when we pick larger parameters. The results illustrated for several values
	of $\theta_{n}$ in Figure \ref{Fig 2} and Table \ref{Tab 1} shows that the convergence rate for $\theta_{n}=0$ is generally slower than the case $\theta_{n}\neq0$. 
	This highlights the effect and importance the nonzero inertial extrapolation, i.e., $\theta_{n}\neq 0$, in speeding up convergence of the sequence, and it also points that large enough  $\theta_{n}$ value gives a better convergence rate. 
\end{example}
\begin{example}	[\textbf{Comparison to methods in \cite{5,6}}]
	For $\psi:\mathbb{R}^{5}\rightarrow\mathbb{R}$ consider 
	\begin{eqnarray}	\label{numer2}
	\begin{array}{l@{}l}
	&{}\mbox{minimize } \psi(x)=\sum\limits_{k=1}^{8}(\frac{1}{2}x^{T}H_{k}x+v_{k}^{T}x)\\&\hspace{0mm}\mbox{subject to } x\in\bigcap\limits_{i=1}^{4}C_{i},
	\end{array}
	\end{eqnarray}
	where $H_{k}$ ($k\in\{1,\ldots,8\}$) is symmetric and positive definite $5\times 5$ matrix, $v_{k}\in\mathbb{R}^{5}$, $C_{i}=C\bigcap(\bigcap_{j=1}^{i} E^{(j)})$ ($i,j\in I=\{1,2,3,4\}$)  where $C=\big\{x\in \mathbb{R}^{5}:\|x\|\leq 1\big\}$  and  $E^{(j)}$ ($j\in \{1,2,3,4\}$) is a half space given by $E^{(j)}=\big\{x\in \mathbb{R}^{5}:\langle x,d^{(j)}\rangle\leq \zeta^{(j)}\big\}$ for  $d^{(j)}\in\mathbb{R}^{5}$ with $d^{(j)}\neq 0$ and  $\zeta^{(j)}\in\mathbb{R}$.
	
	The objective function $\psi$ in the problem
	(\ref{numer2}) is reduced to
	\begin{equation}	\label{objec}
	\psi(x)=\sum\limits_{i=1}^{4}\Big(\frac{1}{2}x^{T}Q_{i}x+q_{i}^{T}x\Big),
	\end{equation}
	where $Q_{i}=A_{i}+B_{i}$, $q_{i}=a_{i}+b_{i}$,   $A_{i}=H_{2i-1}$, $B_{i}=H_{2i}$, $a_{i}=v_{2i-1}$, and $b_{i}=v_{2i}$. Each matrix $Q_{i}$ is symmetric and positive definite $5\times 5$ matrix. 
	
	Here we compare our methods with the proximal methods in \cite{6} and the methods in \cite{5} in solving (\ref{numer2}). For this purpose the following settings (\textbf{S1} and \textbf{S2}) of (\ref{numer2}) are used. \vspace{2mm}
	\item[\textbf{(i).}] \textbf{S1:} The problem (\ref{numer2}) has a form of MAFPSCOP where  \begin{equation} \label{mapp1} T_{i}=\frac{1}{2}\Big(I+P_{C}\prod_{j=1}^{i}P_{E^{(j)}}\Big),
	\end{equation} $\mathcal{G}_{i}(x)=f_{i}(x)+h_{i}(x)$ for $f_{i}(x)=\frac{1}{2}x^{T}A_{i}x+a^{T}_{i}x$ and $h_{i}(x)=\frac{1}{2}x^{T}B_{i}x+b^{T}_{i}x$. 
	Under this setting $f_{i}$, $h_{i}$, and $T_{i}$ satisfy Assumption \ref{assumption1}. Moreover, for $x\in H$, we have  $\mbox{prox}_{\lambda f_{i}}(x)=(I+A_{i})^{-1}(x-a_{i})$
	and  $\nabla h_{i}(x)=B_{i}x+b_{i}$ and $\nabla h_{i}$ is $\|B_{i}\|=\sqrt{\gamma_{i}}=(1/L_{i})$-Lipschitz  where $\gamma_{i}$ is the spectral radius of $B_{i}$.\\	
	\textbf{S2:} The problem (\ref{numer2}) has a form of MAFPSCOP where $\mathcal{G}_{i}(x)=f_{i}(x)+h_{i}(x)$ and $T_{i}$ is given by (\ref{mapp1}),
	$f_{i}(x)=0$ and $h_{i}(x)=\frac{1}{2}x^{T}Q_{i}x+q^{T}_{i}x$. 
	Note that under this setting each $f_{i}$, $h_{i}$ and $T_{i}$ satisfy Assumption \ref{assumption1}. Moreover, for $x\in H$, we have  $\mbox{prox}_{\lambda f_{i}}(x)=x$,  $\nabla h_{i}(x)=Q_{i}x+q_{i}$ and $\nabla h_{i}$ is $\|Q_{i}\|=\sqrt{\gamma_{i}}=(1/L_{i})$-Lipschitz continuous where $\gamma_{i}$ is the spectral radius of $Q_{i}$.\\ Under the settings \textbf{S1} and \textbf{S2} we test Algorithm \ref{algorithm1} and \ref{algorithm2} for solving (\ref{numer2}) where $a=0.1$, $\hat{a}=100$, $b=10$, $q=101$, $t=0.1$, $c=0.001$, $\hat{c}=100$,
	$d=1$, $\hat{d}=101$.	
	\item[\textbf{(ii).}] The problem (\ref{numer2}) is taken as a form of Problem 2.1 in \cite{6} where $f_{i}(x):=\frac{1}{2}x^{T}Q_{i}x+q_{i}^{T}x$ and $T_{i}$ given by (\ref{mapp1}). Note that, in this case $f_{i}$ and $T_{i}$ satisfy the required conditions in \cite{6}  and $\mbox{prox}_{\lambda f_{i}}(x)=(I+Q_{i})^{-1}(x-q_{i})$. Under this setting of the problem (\ref{numer2}) we test Algorithm 3.1 (PROXALG 3.1) and Algorithm 4.1 (PROXALG 4.1) in \cite{6} where the test parameters of PROXALG 3.1 are $\alpha_{n}=\frac{10^{-1}}{(n+100)^{0.1}}$ and $\gamma_{n}=\frac{10^{-1}}{(n+1)^{0.001}}$, and the test parameters of
	PROXALG 4.1 are $\alpha_{n}=0.5$ and $\gamma_{n}=\frac{10^{-1}}{(n+1)^{0.001}}$.	
	
	\item[\textbf{(iii).}] The problem (\ref{numer2}) is taken as a form of Problem 2.1 in \cite{5} where $f_{i}(x)=\frac{1}{2}x^{T}Q_{i}x+q_{i}^{T}x$  and $T_{i}$ given by (\ref{mapp1}). Note that, in this case $f_{i}$ and $T_{i}$ satisfy the required conditions in \cite{5}, and $\nabla h_{i}(x)=Q_{i}x+q_{i}$ and $\nabla h_{i}$ is $\sqrt{\gamma_{i}}=\frac{1}{L_{i}}$-Lipschitz continuous where $\gamma_{i}$ is the spectral radius of $Q_{i}$. We test Algorithm 3.1 (CGALG 3.1) and Algorithm 4.1 (CGALG 4.1) in \cite{5} for the parameters $\alpha_{n}=\frac{10^{-1}}{(n+1)^{0.5}}$, $\lambda_{n}=\frac{10^{-1}}{(n+1)^{0.25}}$, and $\beta_{n}=\frac{10^{-1}}{n+2}$. 
	
	In the experiments, all required starting points are randomly generated. Moreover, we took $X_{i}=C$, and randomly generated symmetric positive definite $5\times 5$ matrices $A_{i}$ and $B_{i}$ ($Q_{i}=A_{i}+B_{i}$), vectors  $a_{i},b_{i},d^{(j)}\in\mathbb{R}^{5}$ ($q_{i}=a_{i}+b_{i}$), and  real number $\xi^{(j)}$ with
	$Q_{i}v+q_{i}=0$ (or $(I+Q_{i})^{-1}(v-q_{i})=v$) and $\langle v,d^{(j)}\rangle\leq\xi^{(j)}$ where $v=(1/\sqrt{10},\ldots,1/\sqrt{10})^{T}\in\mathbb{R}^{5}$. 
	
	Note that
	$S=\bigcap_{i=1}^{M}FixT_{i}=\bigcap_{i=1}^{M}[C\bigcap\big(\bigcap_{j=1}^{i} E^{(j)}\big)]=\bigcap_{i=1}^{M}C_{i}\neq\emptyset$ and $\Omega=\{\hat{x}\in S: \psi (\hat{x})=\min_{x\in S}\psi (x) \}\neq\emptyset.$  Moreover, $\psi$ is strictly convex function since the each matrix $Q_{i}$ is symmetric positive definite $N\times N$ matrix.  
	
	\begin{figure}
		\centering
		\includegraphics[width=1.05\textwidth]{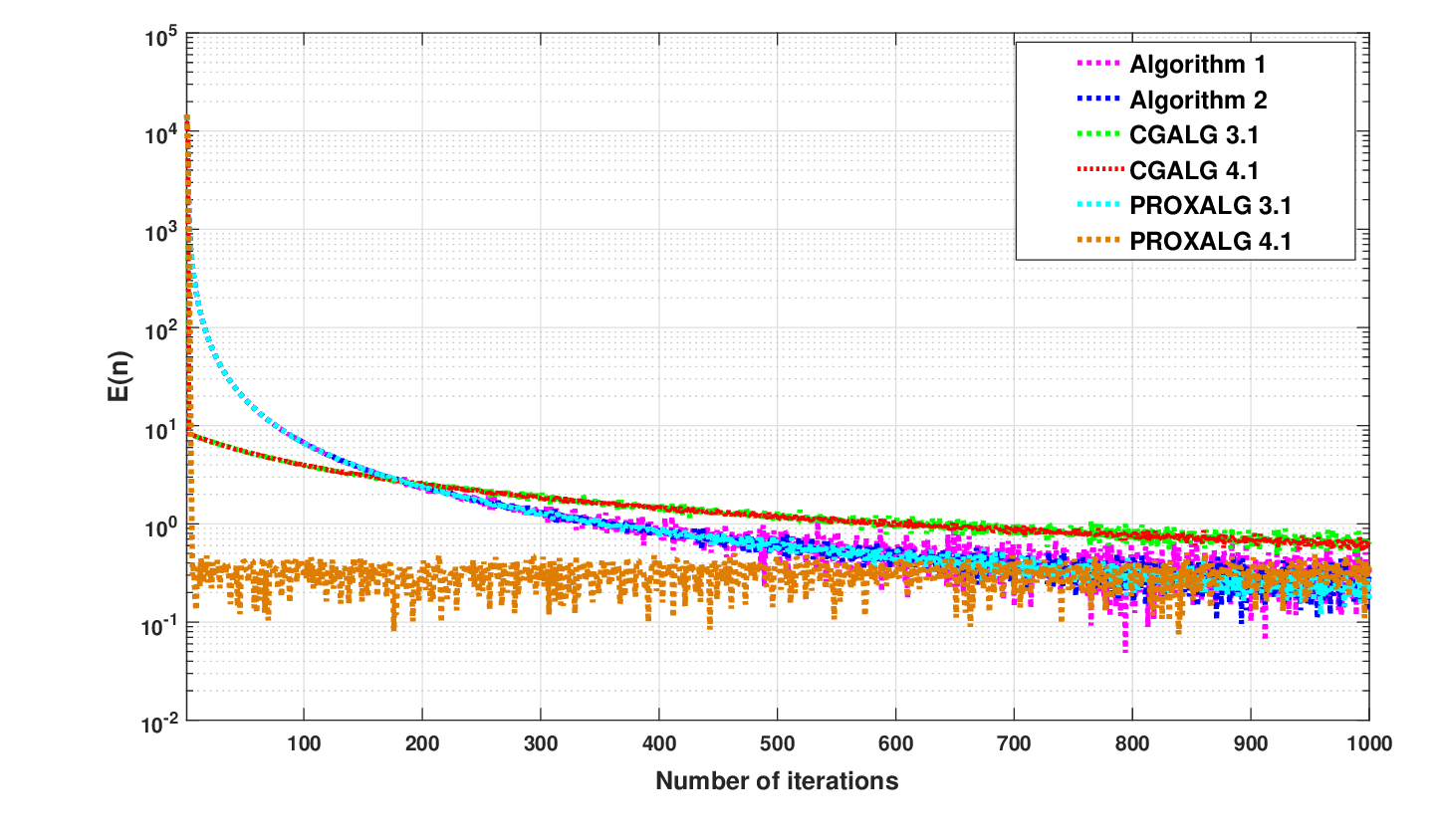}
		\caption{Comparative performance under \textbf{S1}.}
		\label{Fig 4}
	\end{figure}
	\begin{figure}
		\centering
		\includegraphics[width=1.05\textwidth]{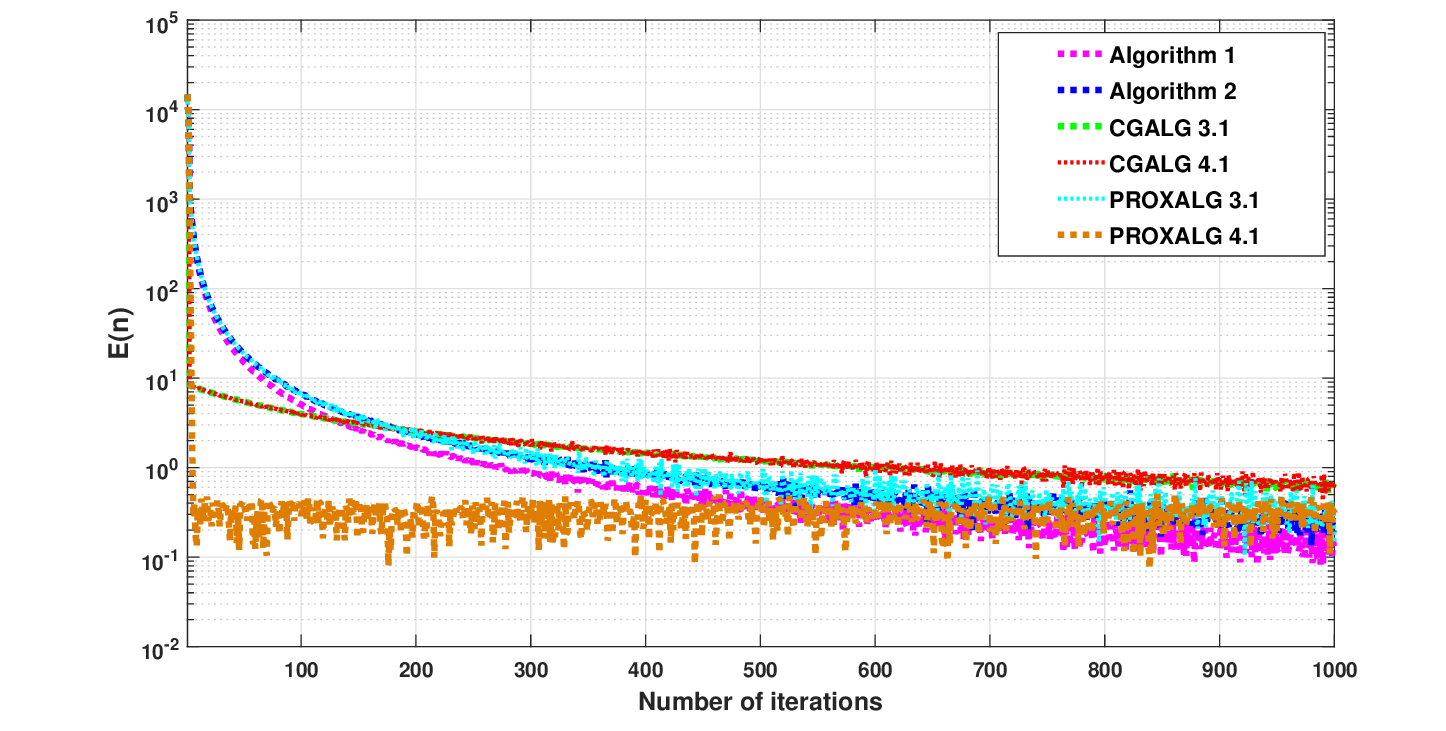}
		\caption{Comparative performance under \textbf{S2}.}
		\label{Fig 5}
	\end{figure}
	\begin{table}[]
		\centering
		\caption{Comparative performance under \textbf{S1} and \textbf{S2}.}\label{Tab 2}
		\begin{tabular}{llllllll}
			\hline
			\multirow{2}{*}{} & \multicolumn{3}{l}{\underline{\hspace{9mm} TOL$=10^{-4}$ \hspace{10mm}}} &  & \multicolumn{3}{l}{\underline{\hspace{9mm} TOL$=10^{-5}$ \hspace{14mm}}} \\ 
			&  $n$   &   CPUt(s)  &   E($n$)  &    & $n$   &   CPUt(s)  &   E($n$)    \\ \hline
			Algorithm 1 (\textbf{S1})  	&  947 &    2.749849   &  0.234959 &  & 1882 &  15.398500   & 0.203173\\
			Algorithm 2 (\textbf{S1})	&  1001  &  3.331085 & 0.637554  &  & 1892 &  17.724907   & 0.086268    \\
			Algorithm 1 (\textbf{S2})     & 964 &    2.171700  &   0.430061& &1887 &  11.40647  & 0.175004	\\
			Algorithm 2 (\textbf{S2})	 &988  &3.243692     &  0.563410  &  & 1904 &  13.330181   & 0.029902  \\
			CGALG 3.1	& 1082  &  2.810116    & 0.825395&&  2177 &   12.096003 & 0.235516    \\
			CGALG 4.1	   & 1211&   2.772201 &  0.532111  & &  \textgreater 2500  &      8.573140&   0.106307 \\
			PROXALG 3.1	 & 1163 &1.969771 &   0.595934 &  &  2253  &     8.072269 &     0.256690  \\
			PROXALG 4.1	    & 996 & 2.369249      & 0.342648 & &  1003 &    10.070528&  0.182512   \\ \hline
		\end{tabular}
	\end{table}
	The experiment evaluations illustrated by Figures \ref{Fig 4} and \ref{Fig 5} and Table \ref{Tab 2} show the computative performance of our  methods. Our algorithms achieve relatively faster convergence than CGALG 3.1, CGALG 4.1, PROXALG 3.1, and PROXALG 4.1. More specifically, from Table \ref{Tab 2}, we can see that Algorithm \ref{algorithm1} takes a minimum of 16 fewer iterations than the others to reach the Tolerance TOL$=10^{-5}$. However, in most cases, our algorithms take more CPU time than CGALG 3.1, CGALG 4.1, PROXALG 3.1, and PROXALG 4.1, and this is not surprising because our algorithms need more computations to accommodate the extended assumptions imposed on the objective function and incorporate more steps to accelerate the convergence of the generated sequences, and this obviously affects the computation time of one iterations.
	
\end{example}
\section*{Conclusions}\label{sec5}

In the paper, we developed a general accelerated iterative method in a distributed setting to solve multi-agent constrained optimization problem. Our approach makes use of proximal, gradient and Halpern methods, and combines inertial extrapolation as an acceleration technique. The numerical experiments show the benefits of inertial extrapolation, the efficiency, and the practical potential of our algorithms. Our next research work is to investigate strongly convergent accelerated algorithms for MAFPSCO and drive the convergence rate of the algorithms.

 \subsection*{Conflict of interest statement} Not Applicable
\bibliography{sn-bibliography}

\section{Appendices}
\section*{APPENDIX A.  Proofs of the
	Lemmas in Subsection \ref{subsec1}}
\subsection*{APPENDIX A.1. Proof of Lemma~\ref{L0}}
\begin{proof}	
If $y^{(i)}_{n}$ in Algorithm \ref{algorithm1} is defined by (\ref{bounded11}), then since $P_{X_{i}}(w)\in X_{i}$ for all $w\in H$ and $X_{i}$ is bounded, it is easy to see that for each user $i$ we have	$\{y_{n}^{(i)}\}\subset X_{i}$, and hence $\{y_{n}^{(i)}\}$ $(i\in I)$ is bounded. Let us show $\{y_{n}^{(i)}\}$ $(i\in I)$ is bounded if $y^{(i)}_{n}$ in Algorithm \ref{algorithm1} is given by (\ref{bounded22}).
Similarly, by definition of projection mapping we have $w_{n}^{(i+1)}=P_{X_{i}}(\alpha_{n}u^{(i)}+(1-\alpha_{n})T_{i}(y^{(i)}_{n}))\in X_{i}$, which implies that $\{w_{n}^{(i+1)}\}$ $(i\in I)$ is a bounded sequence. Since
$x_{n+1}=w^{(M+1)}_{n}$ and $w^{(1)}_{n}=x_{n}$, we have $\{w_{n}^{(i+1)}\}$ $(i\in \{0\}\cup I)$ is bounded as well. Thus, using the definition of $z^{(i)}_{n}$, $\theta_{n}\in [0,1)$ and triangle inequality, we have  $\|z^{(i)}_{n}\|\leq\|w^{(i)}_{n}\|+\theta_{n}\|w^{(i)}_{n}-w^{(i)}_{n-1}\|\leq\|w^{(i)}_{n}\|+\|w^{(i)}_{n}-w^{(i)}_{n-1}\|$ for each user $i\in I$, and thus $\{z_{n}^{(i)}\}$ $(i\in I)$ is bounded sequence.
Since $\{w^{(i)}_{n}\}$ $(i\in I)$ is bounded and $\nabla h_{i}$ $(i\in I)$ is Lipschitz continuous, we have $\{\nabla h_{i}(w^{(i)}_{n})\}$ $(i\in I)$ is bounded sequence. Due to $\lim_{n\rightarrow 0} \beta_{n}=0$ there is $n_{0}\in \mathbb{N}$ such that $\beta_{n}\leq 1/2$ for all $n\geq n_{0}$. Since $\{\nabla h_{i}(w^{(i)}_{n})\}$ $(i\in I)$ is bounded for each $i\in I$ there is $G_{i}>0$ such that 
$\|\nabla h_{i}(w^{(i)}_{n})\|\leq G_{i}(1/2) $. Put $\bar{G}_{i}=\max\Big\{G_{i},\max\limits_{1\leq k\leq n_{0}}\|d^{(i)}_{k}\|\Big\}$. thus, $\|d^{(i)}_{n_{0}}\|\leq \bar{G}_{i}$ $(i\in I)$. Assume that, we have	$\|d^{(i)}_{n}\|\leq \bar{G}_{i}$ for some $n\geq n_{0}$ (induction hypothesis). Then, from
triangle inequality, we get
$$\|d^{(i)}_{n+1}\|\leq\|\nabla h_{i}(w^{(i)}_{n})\|+\beta_{n}\|d^{(i)}_{n}\|\leq\frac{1}{2}G_{i}+\frac{1}{2}\|d^{(i)}_{n}\|\leq\frac{1}{2}G_{i}+\frac{1}{2}\bar{G}_{i}\leq \bar{G}_{i}.$$ Hence, by induction $\|d^{(i)}_{n+1}\|\leq \bar{G}_{i}$ for all $n\geq n_{0}$, which implies $\{d^{(i)}_{n}\}$ $(i\in I)$ is bounded. Let $x\in \arg\min f_{i}$. Since $\arg\min f_{i}=\{y\in H:\mbox{prox}_{\lambda_{n}f_{i}}(y)=y\}=Fix(\mbox{prox}_{\lambda_{n}f_{i}})$ and $\mbox{prox}_{\lambda_{n}f_{i}}$ is also nonexpansive mapping, we get that  
\begin{eqnarray}  \label{eq:100}
\|y^{(i)}_{n}-x\|\nonumber&=&\|\mbox{prox}_{\lambda_{n}f_{i}}(z^{(i)}_{n}+\lambda_{n}d^{(i)}_{n+1})-x\|\nonumber\\&=&\|\mbox{prox}_{\lambda_{n}f_{i}}(z^{(i)}_{n}+\lambda_{n}d^{(i)}_{n+1})-\mbox{prox}_{\lambda_{n}f_{i}}(x)\|\nonumber\\&\leq&\|z^{(i)}_{n}+\lambda_{n}d^{(i)}_{n+1}-x\|\leq\|z^{(i)}_{n}-x\|+2\min_{i\in I}L_{i}\|d^{(i)}_{n+1}\|.
\end{eqnarray}
Therefore, the inequality (\ref{eq:100}) and the boundedness of $\{z^{(i)}_{n}\}$ $(i\in I)$ and $\{d^{(i)}_{n}\}$ $(i\in I)$ implies that $\{y^{(i)}_{n}\}$ $(i\in I)$ is bounded sequence. 
\end{proof}
\subsection*{APPENDIX A.2. Proof of Lemma~\ref{L1}}
\begin{proof} Let $x\in S$. Triangle inequality and the nonexpansiveness of $T_{i}$ gives	
$  
\|T_{i}(y^{(i)}_{n})\|\leq\|T_{i}(y^{(i)}_{n})-T_{i}(x)\|+\|x\|\leq\|y^{(i)}_{n}-x\|+\|x\|$, and this together with the boundedness of $\{y^{(i)}_{n}\}$ $(i\in I)$ from Lemma \ref{L0} implies that $\{T_{i}(y^{(i)}_{n})\}$ $(i\in I)$ is bounded. Since $w^{(i+1)}_{n}=\alpha_{n}u^{(i)}+(1-\alpha_{n})T_{i}(y^{(i)}_{n})$ and $\{T_{i}(y^{(i)}_{n})\}$ $(i\in I)$ is bounded, the sequence $\{w^{(i+1)}_{n}\}$ $(i\in I)$ is bounded. From $x_{n+1}=w^{(M+1)}_{n}$ and $w^{(1)}_{n}=x_{n}$, we get$\{w_{n}^{(i+1)}\}$ $(i\in \{0\}\cup I)$  is bounded, implying that $\{z^{(i)}_{n}\}$ $(i\in I)$ is also  bounded.	Moreover, $\{\nabla h_{i}(w^{(i)}_{n})\}$ $(i\in I)$ is bounded since $\{w^{(i)}_{n}\}$ $(i\in I)$ is bounded and $\nabla h_{i}$ $(i\in I)$ is Lipschitz continuous. Since $\lim_{n\rightarrow 0} \beta_{n}=0$, by similar arguments as in Lemma \ref{L0} there is $\bar{J}_{i}\in\mathbb{R}$ such that $\|d^{(i)}_{n}\|\leq \bar{J}_{i}$ for all $n\in\mathbb{N}$, and hence, $\{d^{(i)}_{n}\}$ $(i\in I)$ is bounded sequence.
\end{proof}
\subsection*{APPENDIX A.3. Proof of Lemma~\ref{L2}}
\begin{proof} $(a)$.  From the definition of $w^{(i+1)}_{n}$ and triangle inequality it follows that 
\begin{eqnarray}  \label{eq:1}
\nonumber&&\|w^{(i+1)}_{n}-w^{(i+1)}_{n-1}\|\nonumber\\&&=\|(\alpha_{n}u^{(i)}+(1-\alpha_{n})T_{i}(y^{(i)}_{n}))-(\alpha_{n-1}u^{(i)}+(1-\alpha_{n-1})T_{i}(y^{(i)}_{n-1}))\|\nonumber\\&&= \|(1-\alpha_{n})(T_{i}(y^{(i)}_{n})-T_{i}(y^{(i)}_{n-1}))+(\alpha_{n}-\alpha_{n-1})(u^{(i)}+T_{i}(y^{(i)}_{n-1}))\|\nonumber\\&&\leq (1-\alpha_{n})\|T_{i}(y^{(i)}_{n})-T_{i}(y^{(i)}_{n-1})\|+|\alpha_{n}-\alpha_{n-1}|\|u^{(i)}+T_{i}(y^{(i)}_{n-1})\|\nonumber\\&&\leq (1-\alpha_{n})\|y^{(i)}_{n}-y^{(i)}_{n-1}\|+|\alpha_{n}-\alpha_{n-1}|\|u^{(i)}+T_{i}(y^{(i)}_{n-1})\|\nonumber\\&&\leq (1-\alpha_{n})\|y^{(i)}_{n}-y^{(i)}_{n-1}\|+|\alpha_{n}-\alpha_{n-1}|\|u^{(i)}+T_{i}(y^{(i)}_{n-1})\|\nonumber\\&&= (1-\alpha_{n})\|y^{(i)}_{n}-y^{(i)}_{n-1}\|+|\alpha_{n}-\alpha_{n-1}|K_{1},
\end{eqnarray}
where $K_{1}=\max_{i\in I}(\sup\{\|u^{(i)}+T_{i}(y^{(i)}_{n-1})\|:n\in\mathbb{N}\})<\infty$. Now let $\bar{y}^{(i)}_{n-1}=\mbox{prox}_{\lambda_{n}f_{i}}(z^{(i)}_{n-1}+\lambda_{n-1}d^{(i)}_{n})$. Then, we also have 
\begin{eqnarray}  \label{eq:2}
\|y^{(i)}_{n}-y^{(i)}_{n-1}\|\leq \|y^{(i)}_{n}-\bar{y}^{(i)}_{n-1}\|+ \|y^{(i)}_{n-1}-\bar{y}^{(i)}_{n-1}\|.
\end{eqnarray}
The nonexpansiveness of proximal, Proposition \ref{P4}, triangle inequality, and Condition \ref{condition1} (decreasing $\lambda_{n}$'s and $\beta_{n}$'s) gives
\begin{eqnarray}  \label{eq:3}
\|y^{(i)}_{n}-\bar{y}^{(i)}_{n-1}\|\nonumber&=&\|\mbox{prox}_{\lambda_{n}f_{i}}(z^{(i)}_{n}+\lambda_{n}d^{(i)}_{n+1})-\mbox{prox}_{\lambda_{n}f_{i}}(z^{(i)}_{n-1}+\lambda_{n-1}d^{(i)}_{n})\| \nonumber\\&\leq&\|z^{(i)}_{n}+\lambda_{n}d^{(i)}_{n+1}-z^{(i)}_{n-1}-\lambda_{n-1}d^{(i)}_{n}\|\nonumber\\&=&\|z^{(i)}_{n}+\lambda_{n}(-\nabla h_{i}(z^{(i)}_{n})+\beta_{n}d^{(i)}_{n})\nonumber\\&&-z^{(i)}_{n-1}-\lambda_{n-1}(-\nabla h_{i}(z^{(i)}_{n-1})+\beta_{n-1}d^{(i)}_{n-1})\|\nonumber\\&=&\|z^{(i)}_{n}-\lambda_{n}\nabla h_{i}(z^{(i)}_{n})-z^{(i)}_{n-1}+\lambda_{n}\nabla h_{i}(z^{(i)}_{n-1}) \nonumber\\&&+(\lambda_{n-1}-\lambda_{n})\nabla h_{i}(z^{(i)}_{n-1})+\lambda_{n}\beta_{n}d^{(i)}_{n}-\lambda_{n-1}\beta_{n-1}d^{(i)}_{n-1}\|\nonumber\\&\leq&\|(z^{(i)}_{n}-\lambda_{n}\nabla h_{i}(z^{(i)}_{n}))-(z^{(i)}_{n-1}-\lambda_{n}\nabla h_{i}(z^{(i)}_{n-1}))\| \nonumber\\&&+|\lambda_{n-1}-\lambda_{n}|\|\nabla h_{i}(z^{(i)}_{n-1})\|+\lambda_{n}\beta_{n}\|d^{(i)}_{n}\|+\lambda_{n-1}\beta_{n-1}\|d^{(i)}_{n-1}\|\nonumber\\&\leq&\|z^{(i)}_{n}-z^{(i)}_{n-1}\| +|\lambda_{n-1}-\lambda_{n}|K_{2}+\lambda_{n-1}\beta_{n-1}K_{3},
\end{eqnarray}
where $K_{2}=\max_{i\in I}(\sup\{\|\nabla h_{i}(z^{(i)}_{n-1})\|:n\in\mathbb{N}\})<\infty$ and $K_{3}=\max_{i\in I}(\sup\{2\|d^{(i)}_{n}\|:n\in\mathbb{N}\})<\infty$. Next we estimate the value of the norm $\|y^{(i)}_{n-1}-\bar{y}^{(i)}_{n-1}\|$ where  $y^{(i)}_{n-1}=\mbox{prox}_{\lambda_{n-1}f_{i}}(z^{(i)}_{n-1}+\lambda_{n-1}d^{(i)}_{n})$ and $\bar{y}^{(i)}_{n-1}=\mbox{prox}_{\lambda_{n}f_{i}}(z^{(i)}_{n-1}+\lambda_{n-1}d^{(i)}_{n})$. Applying Proposition \ref{L2} $(b)$ and on the definitions of $y^{(i)}_{n-1}$ and $\bar{y}^{(i)}_{n-1}$, we have $\frac{z^{(i)}_{n-1}+\lambda_{n-1}d^{(i)}_{n}-y^{(i)}_{n-1}}{\lambda_{n-1}}\in\partial f_{i}(y^{(i)}_{n-1})$ and $\frac{z^{(i)}_{n-1}+\lambda_{n-1}d^{(i)}_{n}-\bar{y}^{(i)}_{n-1}}{\lambda_{n}}\in\partial f_{i}(\bar{y}^{(i)}_{n-1})$. Thanks to the monotonicity property of $\partial f_{i}$, we have
$$
\Big\langle y^{(i)}_{n-1}-\bar{y}^{(i)}_{n-1},\frac{z^{(i)}_{n-1}+\lambda_{n-1}d^{(i)}_{n}-y^{(i)}_{n-1}}{\lambda_{n}}-\frac{z^{(i)}_{n-1}+\lambda_{n-1}d^{(i)}_{n}-\bar{y}^{(i)}_{n-1}}{\lambda_{n-1}}\Big\rangle\geq 0,
$$
which implies by re-arranging it
\begin{eqnarray}\label{eq:4} 
\frac{1}{\lambda_{n-1}\lambda_{n}}\nonumber&&\Big[\langle y^{(i)}_{n-1}-\bar{y}^{(i)}_{n-1},(\lambda_{n-1}-\lambda_{n})(z^{(i)}_{n-1}+\lambda_{n-1}d^{(i)}_{n})\rangle\nonumber\\&&+\langle y^{(i)}_{n-1}-\bar{y}^{(i)}_{n-1},-\lambda_{n-1}(y^{(i)}_{n-1}-\bar{y}^{(i)}_{n-1})\rangle\nonumber\\&&+\langle y^{(i)}_{n-1}-\bar{y}^{(i)}_{n-1},(\lambda_{n}-\lambda_{n-1})\bar{y}^{(i)}_{n-1}\rangle\geq 0.
\end{eqnarray}	
Applying H\'olders inequality in (\ref{eq:4}) yields
\begin{eqnarray} 
\|y^{(i)}_{n-1}-\bar{y}^{(i)}_{n-1}\|^{2}\nonumber&\leq&\frac{|\lambda_{n}-\lambda_{n-1}|}{\lambda_{n-1}}(\|z^{(i)}_{n-1}+\lambda_{n-1}d^{(i)}_{n}\|+\|\bar{y}^{(i)}_{n-1}\|)\|y^{(i)}_{n-1}-\bar{y}^{(i)}_{n-1}\|\nonumber\\&\leq&\frac{|\lambda_{n}-\lambda_{n-1}|}{\lambda_{n-1}}K_{4}\|y^{(i)}_{n-1}-\bar{y}^{(i)}_{n-1}\|,\nonumber
\end{eqnarray}	
where $K_{4}=\max_{i\in I}(\sup\{\|z^{(i)}_{n-1}+\lambda_{n-1}d^{(i)}_{n}\|+\|\bar{y}^{(i)}_{n-1}\|:n\in\mathbb{N}\})<\infty$, and thus
\begin{eqnarray}  \label{eq:5}
\|y^{(i)}_{n-1}-\bar{y}^{(i)}_{n-1}\|&\leq&\frac{|\lambda_{n}-\lambda_{n-1}|}{\lambda_{n-1}}K_{4}.
\end{eqnarray}
Plugging (\ref{eq:3}) and (\ref{eq:5}) into (\ref{eq:2}), we get	
\begin{eqnarray}  \label{eq:6}
\|y^{(i)}_{n}-y^{(i)}_{n-1}\|&\leq&
\|z^{(i)}_{n}-z^{(i)}_{n-1}\| +|\lambda_{n-1}-\lambda_{n}|K_{2}+\lambda_{n-1}\beta_{n-1}K_{3} \\&&\hspace{4mm}+ \frac{|\lambda_{n}-\lambda_{n-1}|}{\lambda_{n-1}}K_{4}.\nonumber
\end{eqnarray}	
Using the definition of $z_{n}^{(i)}$, we get
\begin{eqnarray}  \label{eq:aaa88}
\|z^{(i)}_{n}-z^{(i)}_{n-1}\|\nonumber&\leq&\|w^{(i)}_{n}-w^{(i)}_{n-1}\|+\theta_{n}\|w^{(i)}_{n}-w^{(i)}_{n-1}\|+\theta_{n-1}\|w^{(i)}_{n-1}-w^{(i)}_{n-2}\|\nonumber\\&\leq&\|w^{(i)}_{n}-w^{(i)}_{n-1}\|+\theta_{n-1}K_{5},
\end{eqnarray}
where $K_{5}=\max_{i\in I}(\sup\{2\|w^{(i)}_{n}-w^{(i)}_{n-1}\|:n\in\mathbb{N}\})<\infty$. Consequently, (\ref{eq:1}), (\ref{eq:6}) and (\ref{eq:aaa88}) infer that
$
\|w^{(i+1)}_{n}-w^{(i+1)}_{n-1}\|\leq(1-\alpha_{n})\|w^{(i)}_{n}-w^{(i)}_{n-1}\|+\Lambda_{n},
$
where 	$\Lambda_{n}=\theta_{n-1}K_{5}+|\alpha_{n}-\alpha_{n-1}|K_{1}+|\lambda_{n-1}-\lambda_{n}|K_{2}+\lambda_{n-1}\beta_{n-1}K_{3}+\frac{|\lambda_{n}-\lambda_{n-1}|}{\lambda_{n-1}}K_{4}.$ Applying sum for $i$ from $1$ to $M$ in this inequality, we obtain
$ 
\sum_{i=1}^{M}\big[\|w^{(i+1)}_{n}-w^{(i+1)}_{n-1}\|-\|w^{(i)}_{n}-w^{(i)}_{n-1}\|\big]\leq \sum_{i=1}^{M}\big[-\alpha_{n}\|w^{(i)}_{n}-w^{(i)}_{n-1}\|+\Lambda_{n}\big]\leq -\alpha_{n}\|w^{(1)}_{n}-w^{(1)}_{n-1}\|+M\Lambda_{n}.$ 
This gives 
$ 
\|w^{(M+1)}_{n}-w^{(M+1)}_{n-1}\|-\|w^{(1)}_{n}-w^{(1)}_{n-1}\|\leq -\alpha_{n}\|w^{(1)}_{n}-w^{(1)}_{n-1}\|+M\Lambda_{n},\nonumber
$
and implies by $w^{(M+1)}_{n}=x_{n+1}$ and  $w^{(1)}_{n}=x_{n}$ that	
\begin{eqnarray}  \label{eq:9}
\|x_{n+1}-x_{n}\|&\leq& (1-\alpha_{n})\|x_{n}-x_{n-1}\|+M\Lambda_{n}.
\end{eqnarray}	
From $\lambda_{n-1}\in (0,2\min_{i\in I}L_{i}]$, we get $\frac{|\lambda_{n-1}-\lambda_{n}|}{\lambda_{n}}=2L\frac{|\lambda_{n-1}-\lambda_{n}|}{2L\lambda_{n}}\leq2L\frac{|\lambda_{n-1}-\lambda_{n}|}{\lambda_{n-1}\lambda_{n}}=2L\Big|\frac{1}{\lambda_{n}}-\frac{1}{\lambda_{n-1}}\Big|$. Hence, using (\ref{eq:9}) and the definition of $\Lambda_{n}$, we get 
\begin{eqnarray} \label{eq:aaa10} 
\nonumber&&\frac{\|x_{n+1}-x_{n}\|}{\lambda_{n}}\leq(1-\alpha_{n})\frac{\|x_{n}-x_{n-1}\|}{\lambda_{n}} +\frac{M}{\lambda_{n}}\Lambda_{n}\nonumber\\&&\hspace{5mm}= (1-\alpha_{n})\frac{\|x_{n}-x_{n-1}\|}{\lambda_{n}}+\frac{\theta_{n-1}}{\lambda_{n}}MK_{5}+\frac{|\alpha_{n}-\alpha_{n-1}|}{\lambda_{n}}M K_{1}\nonumber\\&&\hspace{12mm}+\frac{|\lambda_{n-1}-\lambda_{n}|}{\lambda_{n}}MK_{2}+\frac{\lambda_{n-1}}{\lambda_{n}}\beta_{n-1}MK_{3}+\Big|\frac{1}{\lambda_{n}}-\frac{1}{\lambda_{n-1}}\Big|MK_{4}\nonumber\\&&\hspace{5mm}\leq (1-\alpha_{n})\frac{\|x_{n}-x_{n-1}\|}{\lambda_{n}}+\frac{\theta_{n-1}}{\lambda_{n}}MK_{5}+\frac{|\alpha_{n}-\alpha_{n-1}|}{\lambda_{n}}M K_{1}\nonumber\\&&\hspace{12mm}+\frac{\lambda_{n-1}}{\lambda_{n}}\beta_{n-1}MK_{3}+\Big|\frac{1}{\lambda_{n}}-\frac{1}{\lambda_{n-1}}\Big|(2LMK_{2}+MK_{4})\nonumber\\&&\hspace{5mm}\leq (1-\alpha_{n})\frac{\|x_{n}-x_{n-1}\|}{\lambda_{n-1}}+(1-\alpha_{n})\Big|\frac{\|x_{n}-x_{n-1}\|}{\lambda_{n}}-\frac{\|x_{n}-x_{n-1}\|}{\lambda_{n-1}}\Big|\nonumber\\&&\hspace{12mm}+\frac{\theta_{n-1}}{\lambda_{n}}MK_{5}+\frac{|\alpha_{n}-\alpha_{n-1}|}{\lambda_{n}}M K_{1}+\sigma\beta_{n-1}MK_{3}\nonumber\\&&\hspace{12mm}+\Big|\frac{1}{\lambda_{n}}-\frac{1}{\lambda_{n-1}}\Big|(2LMK_{2}+MK_{4})\nonumber\\&&\hspace{5mm}\leq (1-\alpha_{n})\frac{\|x_{n}-x_{n-1}\|}{\lambda_{n-1}}+\Big|\frac{1}{\lambda_{n}}-\frac{1}{\lambda_{n-1}}\Big|K_{6}+\frac{\theta_{n-1}}{\lambda_{n}}MK_{5}\nonumber\\&&\hspace{12mm}+\frac{|\alpha_{n}-\alpha_{n-1}|}{\lambda_{n}}M K_{1}+\sigma\beta_{n-1}MK_{3}+\Big|\frac{1}{\lambda_{n}}-\frac{1}{\lambda_{n-1}}\Big|(2LMK_{2}+MK_{4})\nonumber\\&&\hspace{5mm}= (1-\alpha_{n})\frac{\|x_{n}-x_{n-1}\|}{\lambda_{n-1}}+\frac{|\alpha_{n}-\alpha_{n-1}|}{\lambda_{n}}M K_{1}+\frac{\theta_{n-1}}{\lambda_{n}}MK_{5}\nonumber\\&&\hspace{12mm}+\sigma\beta_{n-1}MK_{3}+\Big|\frac{1}{\lambda_{n}}-\frac{1}{\lambda_{n-1}}\Big|K_{7},
\end{eqnarray}	
where $K_{6}=\sup\{\|x_{n}-x_{n-1}\|:n\in\mathbb{N}\}<\infty$ and $K_{7}=K_{6}+2LMK_{2}+MK_{4}$.  Applying (\ref{eq:aaa10}), we have
\begin{eqnarray}  \label{eq:10}
\frac{\|x_{n+1}-x_{n}\|}{\lambda_{n}}\leq (1-\alpha_{n})\frac{\|x_{n}-x_{n-1}\|}{\lambda_{n-1}}+\alpha_{n}\psi_{n},
\end{eqnarray}
where
\begin{eqnarray}  
\psi_{n}\nonumber=\frac{1}{\lambda_{n}}\Big|1-\frac{\alpha_{n-1}}{\alpha_{n}}\Big|M K_{1}+\frac{\theta_{n-1}}{\lambda_{n}\alpha_{n}}MK_{5}+\sigma\frac{\beta_{n-1}}{\alpha_{n}}KM_{3}+\frac{1}{\alpha_{n}}\Big|\frac{1}{\lambda_{n}}-\frac{1}{\lambda_{n-1}}\Big|K_{7}.
\end{eqnarray}	
Therefore, the result in (\ref{eq:10}) and the parameter restrictions in Assumption 2 in view of Proposition \ref{P5} guarantees
$
\lim_{n\rightarrow\infty}\frac{\|x_{n+1}-x_{n}\|}{\lambda_{n}}=0,
$
which with $\lim_{n\rightarrow\infty}\lambda_{n}=0$ implies that
$
\lim_{n\rightarrow\infty}\|x_{n+1}-x_{n}\|=0.
$
\item{$(b)$.} From $y^{(i)}_{n}=\mbox{prox}_{\lambda_{n}f_{i}}(z^{(i)}_{n}+\lambda_{n}d^{(i)}_{n+1})$ and Proposition \ref{P2} $(b)$, we have $z^{(i)}_{n}+\lambda_{n}d^{(i)}_{n+1}-y^{(i)}_{n}\in\lambda_{n}\partial f_{i}(y^{(i)}_{n})$ and hence by definition of subdifferential, we get
$\langle   z^{(i)}_{n}-y^{(i)}_{n}+\lambda_{n}d^{(i)}_{n+1},\bar{x}-y^{(i)}_{n}\rangle \leq \lambda_{n}(f_{i}(\bar{x})-f_{i}(y^{(i)}_{n})).$ Thus, by applying the equality $
2\langle  z^{(i)}_{n}-y^{(i)}_{n},\bar{x}-y^{(i)}_{n}\rangle =\|y^{(i)}_{n}-\bar{x}\|^{2}+\|z^{(i)}_{n}-y^{(i)}_{n}\|^{2}-\|z^{(i)}_{n}-\bar{x}\|^{2}$, it follows that
\begin{eqnarray}  \label{eq:14}
\|y^{(i)}_{n}-\bar{x}\|^{2}\nonumber&\leq&\|z^{(i)}_{n}-\bar{x}\|^{2}-\|z^{(i)}_{n}-y^{(i)}_{n}\|^{2}+2\lambda_{n}(f_{i}(\bar{x})-f_{i}(y^{(i)}_{n}))\nonumber\\&&+2\lambda_{n}\langle  d^{(i)}_{n+1},y^{(i)}_{n}-\bar{x}\rangle.
\end{eqnarray}
The definitions of $z^{(i)}_{n}$ and the triangle inequality ensures
\begin{eqnarray}  \label{eq:15}
\|z^{(i)}_{n}-\bar{x}\|^{2}\leq(\|w^{(i)}_{n}-\bar{x}\|+\theta_{n}\|w^{(i)}_{n}-w^{(i)}_{n-1}\|)^{2}\leq\|w^{(i)}_{n}-\bar{x}\|^{2}+\theta_{n}Q_{1},
\end{eqnarray}
where $Q_{1}=\max_{i\in I}(\sup\{\|w^{(i)}_{n}-w^{(i)}_{n-1}\|^{2}+2\|w^{(i)}_{n}-w^{(i)}_{n-1}\|\|w^{(i)}_{n}-\bar{x}\|:n\in\mathbb{N}\})<\infty$. Moreover, the definitions of $w^{(i+1)}_{n}$, the convexity of $\|.\|^{2}$, $\alpha_{n}\in(0,1]$, and firmly nonexpansiveness of $T_{i}$ gives
\begin{eqnarray}  \label{eq:16}
\|w^{(i+1)}_{n}-\bar{x}\|^{2}\nonumber&\leq&(1-\alpha_{n})\|T_{i}(y^{(i)}_{n})-\bar{x}\|^{2}+\alpha_{n}\|u^{(i)}-\bar{x}\|^{2}\nonumber\\&\leq&\|y^{(i)}_{n}-\bar{x}\|^{2}-(1-\alpha_{n})\|T_{i}(y^{(i)}_{n})-y^{(i)}_{n}\|^{2}+\alpha_{n}Q_{2},
\end{eqnarray}
where $Q_{2}=\max\{\|u^{(i)}-\bar{x}\|^{2}:i\in I\}<\infty$. Hence, (\ref{eq:14}), (\ref{eq:15}) and (\ref{eq:16}) yields
\begin{eqnarray}  
\|w^{(i+1)}_{n}-\bar{x}\|^{2}\nonumber&\leq&\|w^{(i)}_{n}-\bar{x}\|^{2}+\theta_{n}Q_{1}+\alpha_{n}Q_{2}-(1-\alpha_{n})\|T_{i}(y^{(i)}_{n})-y^{(i)}_{n}\|^{2}\nonumber\\&&-\|z^{(i)}_{n}-y^{(i)}_{n}\|^{2}+2\lambda_{n}(f_{i}(\bar{x})-f_{i}(y^{(i)}_{n}))+2\lambda_{n}\langle  d^{(i)}_{n+1},y^{(i)}_{n}-\bar{x}\rangle.\nonumber 
\end{eqnarray}

\end{proof}
\subsection*{APPENDIX A.4. Proof of Lemma~\ref{L3}}
\begin{proof} $(a).$ Let $\bar{x}\in S$. Now, in view of	Proposition \ref{P2} $(d)$, there exists
$q^{(i)}\in\partial f_{i}(\bar{x})$  and $\partial f_{i}(\bar{x})$ ($i\in I$) is  bounded. Thus, by the
definition of $\partial f_{i}$ and the boundedness $\{y_{n}^{(i)}\}$ ($i\in I$) from Lemma \ref{L1}, there is a nonnegative real number $Q_{0}$ such that
$2(f_{i}(\bar{x})-f_{i}(y_{n}^{(i)}))\leq 2 \langle \bar{x}-y_{n}^{(i)},q^{(i)}\rangle\leq 2\|\bar{x}-y_{n}^{(i)}\|\|q^{(i)}\|\leq Q_{0},$
for all $n\in\mathbb{N}$, $i\in I$.  
Thus, using Lemma \ref{L2}, we have
\begin{eqnarray}
\|w^{(i+1)}_{n}-\bar{x}\|^{2}&\leq&\|w^{(i)}_{n}-\bar{x}\|^{2}+\theta_{n}Q_{1}+\alpha_{n}Q_{2}+\lambda_{n}Q_{0}+\lambda_{n}Q_{3}\label{eq:17}\\&&\hspace{6mm}-(1-\alpha_{n})\|T_{i}(y^{(i)}_{n})-y^{(i)}_{n}\|^{2}-\|z^{(i)}_{n}-y^{(i)}_{n}\|^{2},\nonumber
\end{eqnarray}
where
$Q_{3}=\max_{i\in I}(\sup\{2\langle  d^{(i)}_{n+1},y^{(i)}_{n}-\bar{x}\rangle:n\in\mathbb{N}\})<\infty$. 
Therefore, summing both sides of the inequality (\ref{eq:17}) from $i = 1$ to $M$, we obtain  
$\|w^{(M+1)}_{n}-\bar{x}\|^{2}\leq\|w^{(1)}_{n}-\bar{x}\|^{2}+M\Upsilon_{n}-(1-\alpha_{n})\sum_{i=1}^{M}\|T_{i}(y^{(i)}_{n})-y^{(i)}_{n}\|^{2}-\sum_{i=1}^{M}\|z^{(i)}_{n}-y^{(i)}_{n}\|^{2},$
where $\Upsilon_{n}=\theta_{n}Q_{1}+\alpha_{n}Q_{2}+\lambda_{n}(Q_{0}+Q_{3})$. Noting $w^{(M+1)}_{n}=x_{n+1}$ and $w^{(1)}_{n}=x_{n}$, we get
$\|x_{n+1}-\bar{x}\|^{2}\leq\|x_{n}-\bar{x}\|^{2}+M\Upsilon_{n}-(1-\alpha_{n})\sum_{i=1}^{M}\|T_{i}(y^{(i)}_{n})-y^{(i)}_{n}\|^{2}-\sum_{i=1}^{M}\|z^{(i)}_{n}-y^{(i)}_{n}\|^{2},
$
which leads to
\begin{eqnarray}\label{eq:18}\nonumber&&(1-\alpha_{n})\sum\limits_{i=1}^{M}\|T_{i}(y^{(i)}_{n})-y^{(i)}_{n}\|^{2}+\sum\limits_{i=1}^{M}\|z^{(i)}_{n}-y^{(i)}_{n}\|^{2}\nonumber\\&&\hspace{14mm}\leq\|x_{n}-\bar{x}\|^{2}-\|x_{n+1}-\bar{x}\|^{2}+M\Upsilon_{n}\nonumber\\&&\hspace{14mm}=(\|x_{n}-\bar{x}\|-\|x_{n+1}-\bar{x}\|)(\|x_{n}-\bar{x}\|+\|x_{n+1}-\bar{x}\|)+M\Upsilon_{n}\nonumber\\&&\hspace{14mm}\leq\|x_{n}-x_{n+1}\|Q_{4}+M\Upsilon_{n},
\end{eqnarray}
where  
$Q_{4}=\sup\{\|x_{n}-\bar{x}\|+\|x_{n+1}-\bar{x}\|:n\in\mathbb{N}\}<\infty$.
From $\theta_{n}, \alpha_{n}, \lambda_{n}\rightarrow 0$ in Condition \ref{condition1}, we have $\Upsilon_{n}=\theta_{n}Q_{1}+\alpha_{n}Q_{2}+\lambda_{n}(Q_{0}+Q_{3})\rightarrow 0$,
and by Lemma \ref{L2} $(a)$, we have $\|x_{n}-x_{n+1}\|\rightarrow0$. Hence, from (\ref{eq:18}),  we obtain
$
\lim\limits_{n\rightarrow\infty}\big[(1-\alpha_{n})\sum_{i=1}^{M}\|T_{i}(y^{(i)}_{n})-y^{(i)}_{n}\|^{2}+\sum_{i=1}^{M}\|z^{(i)}_{n}-y^{(i)}_{n}\|^{2}\big]=0
$
implying that $
\lim\limits_{n\rightarrow\infty}\big[\sum_{i=1}^{M}\|T_{i}(y^{(i)}_{n})-y^{(i)}_{n}\|^{2}+\sum_{i=1}^{M}\|z^{(i)}_{n}-y^{(i)}_{n}\|^{2}\big]=0,
$ and hence
\begin{eqnarray}\label{eq:19}
\lim\limits_{n\rightarrow\infty}\|z^{(i)}_{n}-y^{(i)}_{n}\|=\lim_{n\rightarrow\infty}\|T_{i}(y^{(i)}_{n})-y^{(i)}_{n}\|=0.
\end{eqnarray}
On the other hand, using the definition of $z^{(i)}_{n}$ and $w_{n}^{(i+1)}$ in the algorithm, we get $\|w_{n}^{(i)}-z^{(i)}_{n}\|=\theta_{n}\|w_{n}^{(i)}-w_{n-1}^{(i)}\|$ and $\|w_{n}^{(i+1)}-T_{i}(y^{(i)}_{n})\|=\alpha_{n}\|u^{(i)}-T_{i}(y^{(i)}_{n})\|.$
Thus, the boundedness of $\{w_{n}^{(i)}\}$ ($i\in I$) and $\{T_{i}(y^{(i)}_{n})\}$ ($i\in I$) together with $ \lim_{n\rightarrow\infty}\alpha_{n}=\lim_{n\rightarrow\infty}\theta_{n}= 0$ gives
\begin{eqnarray}\label{eq:21}
\lim\limits_{n\rightarrow\infty}\|w_{n}^{(i)}-z^{(i)}_{n}\|= \lim\limits_{n\rightarrow\infty}\|w_{n}^{(i+1)}-T_{i}(y^{(i)}_{n})\|=0.
\end{eqnarray}
Combining (\ref{eq:19}) and (\ref{eq:21}), we obtain
\begin{eqnarray}\label{eq:22}
\lim\limits_{n\rightarrow\infty}\|y_{n}^{(i)}-w^{(i+1)}_{n}\|=\lim\limits_{n\rightarrow\infty}\|w_{n}^{(i)}-y^{(i)}_{n}\|=0.
\end{eqnarray}
Moreover, by (\ref{eq:19}) and (\ref{eq:22}), we get $
\lim_{n\rightarrow\infty}\|z_{n}^{(i)}-w^{(i+1)}_{n}\|=0.$
Noting $x_{n}=w^{(1)}_{n}$, for each $i=2,\ldots,M$ we have  
$
\|x_{n}-w^{(i)}_{n}\|=\sum_{k=1}^{i-1}\|w^{(k)}_{n}-w^{(k+1)}_{n}\|\leq\sum_{k=1}^{i-1}(\|w^{(k)}_{n}-z^{(k)}_{n}\|+\|z^{(k)}_{n}-w^{(k+1)}_{n}\|),
$
and hence this together with (\ref{eq:21}) and $\lim_{n\rightarrow\infty}\|z_{n}^{(i)}-w^{(i+1)}_{n}\|=0$ gives
$\lim_{n\rightarrow\infty}\|x_{n}-w^{(i)}_{n}\|=0$. Moreover, (\ref{eq:22}) and $\lim_{n\rightarrow\infty}\|x_{n}-w^{(i)}_{n}\|=0$ together with the triangle inequality yields 
\begin{eqnarray}\label{eq:24}\lim\limits_{n\rightarrow\infty}\|x_{n}-y^{(i)}_{n}\|=0.\end{eqnarray}
\item{$(b)$.} From (\ref{eq:19}), (\ref{eq:24}), and
$\|x_{n}-z^{(i)}_{n}\|\leq\|x_{n}-y^{(i)}_{n}\|+\|y^{(i)}_{n}-z^{(i)}_{n}\|$, we get $\lim_{n\rightarrow\infty}\|x_{n}-z^{(i)}_{n}\|=0$. Now, noting that $T_{i}$ is  nonexpansive, we obtain
\begin{eqnarray}\label{eq:24jjj}
\|x_{n}-T_{i}(x_{n})\|\nonumber&=&\|x_{n}-y^{(i)}_{n}\|+\|y^{(i)}_{n}-T_{i}(y^{(i)}_{n})\|+\|T_{i}(x_{n})-T_{i}(y^{(i)}_{n})\|\nonumber\\&\leq&2\|x_{n}-y^{(i)}_{n}\|+\|y^{(i)}_{n}-T_{i}(y^{(i)}_{n})\|.
\end{eqnarray}
Hence,  (\ref{eq:19}), (\ref{eq:24}) and  (\ref{eq:24jjj}) imply 
$
\lim_{n\rightarrow\infty}\|x_{n}-T_{i}(x_{n})\|=0
$ for all $i\in I$.
\end{proof}
\section*{APPENDIX B.  Proofs of the
Lemmas in Subsection \ref{subsec2}}
\subsection*{APPENDIX B.1. Proof of Lemma~\ref{L4}}
\begin{proof}
We omit the proof since it
is similar to the proof of Lemma \ref{L1}.	
\end{proof}
\subsection*{APPENDIX B.2. Proof of Lemma~\ref{L5}}
\begin{proof}
\item{$(a)$.} Followed from the definition of $w^{(i+1)}_{n}$ and the triangle inequality, we get
\begin{eqnarray}  \label{eq:bb1}
\|w^{(i+1)}_{n}-w^{(i+1)}_{n-1}\|\leq (1-\alpha_{n})\|y^{(i)}_{n}-y^{(i)}_{n-1}\|+|\alpha_{n}-\alpha_{n-1}|W_{1},
\end{eqnarray}
where $W_{1}=\max_{i\in I}(\sup\{\|u^{(i)}+T_{i}(y^{(i)}_{n-1})\|:n\in\mathbb{N}\})<\infty$. Note that  
\begin{eqnarray}  \label{eq:bb2}
\|y^{(i)}_{n}-y^{(i)}_{n-1}\|\leq \|y^{(i)}_{n}-\bar{y}^{(i)}_{n-1}\|+ \|y^{(i)}_{n-1}-\bar{y}^{(i)}_{n-1}\|.
\end{eqnarray}
where $\bar{y}^{(i)}_{n-1}=\mbox{prox}_{\lambda_{n}f_{i}}(z_{n-1}+\lambda_{n-1}d^{(i)}_{n})$. Using the fact that $\mbox{prox}_{\lambda_{n}f_{i}}$ and $I-\lambda_{n}\nabla h_{i}$ are nonexpansive, it follows that
\begin{eqnarray}  \label{eq:bb3}
\|y^{(i)}_{n}-\bar{y}^{(i)}_{n-1}\|\nonumber&=&\|\mbox{prox}_{\lambda_{n}f_{i}}(z_{n}+\lambda_{n}d^{(i)}_{n+1})-\mbox{prox}_{\lambda_{n}f_{i}}(z_{n-1}+\lambda_{n-1}d^{(i)}_{n})\| \nonumber\\&\leq&\|z_{n}+\lambda_{n}d^{(i)}_{n+1}-z_{n-1}-\lambda_{n-1}d^{(i)}_{n}\|\nonumber\\&=&\|z_{n}+\lambda_{n}(-\nabla h_{i}(z_{n})+\beta_{n}d^{(i)}_{n})\nonumber\\&&-z_{n-1}-\lambda_{n-1}(-\nabla h_{i}(z_{n-1})+\beta_{n-1}d^{(i)}_{n-1})\|\nonumber\\&\leq&\|(z_{n}-\lambda_{n}\nabla h_{i}(z_{n}))-(z_{n-1}-\lambda_{n}\nabla h_{i}(z_{n-1}))\| \nonumber\\&&+|\lambda_{n-1}-\lambda_{n}|\|\nabla h_{i}(z_{n-1})\|+\lambda_{n}\beta_{n}\|d^{(i)}_{n}\|+\lambda_{n-1}\beta_{n-1}\|d^{(i)}_{n-1}\|\nonumber\\&\leq&\|z_{n}-z_{n-1}\| +|\lambda_{n-1}-\lambda_{n}|W_{2}+\lambda_{n-1}\beta_{n-1}W_{3}
\end{eqnarray}
for each $i\in I$, where $W_{2}=\max\limits_{i\in I}(\sup\{\|\nabla h_{i}(z_{n-1})\|:n\in\mathbb{N}\})<\infty$ and $W_{3}=\max\limits_{i\in I}(\sup\{2\|d^{(i)}_{n}\|:n\in\mathbb{N}\})<\infty$. The definitions of  $y^{(i)}_{n-1}$ and $\bar{y}^{(i)}_{n-1}$ gives $\frac{z_{n-1}+\lambda_{n-1}d^{(i)}_{n}-y^{(i)}_{n-1}}{\lambda_{n-1}}\in\partial f_{i}(y^{(i)}_{n-1})$ and $\frac{z_{n-1}+\lambda_{n-1}d^{(i)}_{n}-\bar{y}^{(i)}_{n-1}}{\lambda_{n}}\in\partial f_{i}(\bar{y}^{(i)}_{n-1})$. Applying the monotonicity of $\partial f_{i}$, we get $$
\big\langle y^{(i)}_{n-1}-\bar{y}^{(i)}_{n-1},\frac{z_{n-1}+\lambda_{n-1}d^{(i)}_{n}-y^{(i)}_{n-1}}{\lambda_{n}}-\frac{z_{n-1}+\lambda_{n-1}d^{(i)}_{n}-\bar{y}^{(i)}_{n-1}}{\lambda_{n-1}}\big\rangle\geq 0,
$$
and rearranging it and using triangle inequality, we obtain
\begin{eqnarray}  
\|y^{(i)}_{n-1}-\bar{y}^{(i)}_{n-1}\|^{2}\nonumber&\leq&\frac{|\lambda_{n}-\lambda_{n-1}|}{\lambda_{n-1}}(\|z_{n-1}+\lambda_{n-1}d^{(i)}_{n}\|+\|\bar{y}^{(i)}_{n-1}\|)\|y^{(i)}_{n-1}-\bar{y}^{(i)}_{n-1}\|\nonumber\\&\leq&\frac{|\lambda_{n}-\lambda_{n-1}|}{\lambda_{n-1}}W_{4}\|y^{(i)}_{n-1}-\bar{y}^{(i)}_{n-1}\|,\nonumber
\end{eqnarray}	
where $W_{4}=\max_{i\in I}(\sup\{\|z_{n-1}+\lambda_{n-1}d^{(i)}_{n}\|+\|\bar{y}^{(i)}_{n-1}\|:n\in\mathbb{N}\})<\infty$. That is, 
\begin{eqnarray}  \label{eq:bb5}
\|y^{(i)}_{n-1}-\bar{y}^{(i)}_{n-1}\|\leq\frac{|\lambda_{n}-\lambda_{n-1}|}{\lambda_{n-1}}W_{4}.
\end{eqnarray}
Using the definition of $z_{n}$, we have $\|z_{n}-z_{n-1}\|\leq\|x_{n}-x_{n-1}\|+\theta_{n-1}W_{5},$
where $W_{5}=\max_{i\in I}(\sup\{2\|x_{n}-x_{n-1}\|:n\in\mathbb{N}\})<\infty$, and this together with the inequalities (\ref{eq:bb1})-(\ref{eq:bb5}) gives
\begin{eqnarray}  \label{eq:bb7}
&&\|w^{(i+1)}_{n}-w^{(i+1)}_{n-1}\|\leq (1-\alpha_{n})\|x_{n}-x_{n-1}\|+\theta_{n-1}W_{5}+|\alpha_{n}-\alpha_{n-1}|W_{1}\nonumber\\&&\hspace{20mm}+|\lambda_{n-1}-\lambda_{n}|W_{2}+\lambda_{n-1}\beta_{n-1}W_{3}+\frac{|\lambda_{n}-\lambda_{n-1}|}{\lambda_{n-1}}W_{4}.
\end{eqnarray}
From the definition of $\{x_{n+1}\}$ and (\ref{eq:bb7}), we obtain
\begin{eqnarray}  
\|x_{n+1}-x_{n-1}\|\nonumber&=&\Big\|\frac{1}{M}\sum\limits_{i=1}^{M}w^{(i+1)}_{n}-\frac{1}{M}\sum\limits_{i=1}^{M}w^{(i+1)}_{n-1}\Big\|\leq\frac{1}{M}\sum\limits_{i=1}^{M}\|w^{(i+1)}_{n}-w^{(i+1)}_{n-1}\|\nonumber\\&\leq& (1-\alpha_{n})\|x_{n}-x_{n-1}\|+\theta_{n-1}W_{5}+|\alpha_{n}-\alpha_{n-1}|W_{1}\nonumber\\&&+|\lambda_{n-1}-\lambda_{n}|W_{2}+\lambda_{n-1}\beta_{n-1}W_{3}+\frac{|\lambda_{n}-\lambda_{n-1}|}{\lambda_{n-1}}W_{4},\nonumber
\end{eqnarray}
and by a similar argument as in Lemma \ref{L2} $(a)$ this yields
\begin{eqnarray}  \label{eq:bb10}
\frac{\|x_{n+1}-x_{n}\|}{\lambda_{n}}\leq (1-\alpha_{n})\frac{\|x_{n}-x_{n-1}\|}{\lambda_{n-1}}+\alpha_{n}\Lambda_{n},
\end{eqnarray}
where $
\Lambda_{n}\nonumber=\frac{\theta_{n-1}}{\lambda_{n}\alpha_{n}}W_{5}+\frac{1}{\lambda_{n}}\Big|1-\frac{\alpha_{n-1}}{\alpha_{n}}\Big| W_{1}+\sigma\frac{\beta_{n-1}}{\alpha_{n}}W_{3}+\frac{1}{\alpha_{n}}\Big|\frac{1}{\lambda_{n}}-\frac{1}{\lambda_{n-1}}\Big|W_{7},
$	
and $W_{7}=2LW_{2}+W_{4}$. Therefore, using Assumption \ref{assumption1} and Proposition \ref{P5}, it follows from (\ref{eq:bb10}) that	
$
\lim_{n\rightarrow\infty}\frac{\|x_{n+1}-x_{n}\|}{\lambda_{n}}=\lim_{n\rightarrow\infty}\|x_{n+1}-x_{n}\|=0.$
\item{$(b)$.} By $y^{(i)}_{n}=\mbox{prox}_{\lambda_{n}f_{i}}(z_{n}+\lambda_{n}d^{(i)}_{n+1})$, we get
$
\langle   z_{n}+\lambda_{n}d^{(i)}_{n+1}-y^{(i)}_{n},\bar{x}-y^{(i)}_{n}\rangle \leq \lambda_{n}(f_{i}(\bar{x})-f_{i}(y^{(i)}_{n})),$
from which it simply follows that
\begin{eqnarray}  \label{eq:bb14}
\|y^{(i)}_{n}-\bar{x}\|^{2}&\leq&\|z_{n}-\bar{x}\|^{2}-\|z_{n}-y^{(i)}_{n}\|^{2}+2\lambda_{n}(f_{i}(\bar{x})-f_{i}(y^{(i)}_{n}))\\&&\hspace{30mm}+2\lambda_{n}\langle  d^{(i)}_{n+1},y^{(i)}_{n}-\bar{x}\rangle.\nonumber
\end{eqnarray}
Furthermore, the definitions $z_{n}$ yields $\|z_{n}-\bar{x}\|^{2}\leq\|x_{n}-\bar{x}\|^{2}+\theta_{n}D_{1}$, where $D_{1}=\max_{i\in I}(\sup\{\|x_{n}-x_{n-1}\|^{2}+2\|x_{n}-x_{n-1}\|\|x_{n}-\bar{x}\|:n\in\mathbb{N}\})<\infty$, and the definition of $w^{(i+1)}_{n}$ and the firmly nonexpansiveness of the mapping $T_{i}$ gives
$\|w^{(i+1)}_{n}-\bar{x}\|^{2}\leq(1-\alpha_{n})\|T_{i}(y^{(i)}_{n})-\bar{x}\|^{2}+\alpha_{n}\|u^{(i)}-\bar{x}\|^{2}\leq\|y^{(i)}_{n}-\bar{x}\|^{2}-(1-\alpha_{n})\|T_{i}(y^{(i)}_{n})-y^{(i)}_{n}\|^{2}+\alpha_{n}D_{2},$
where $D_{2}=\max_{i\in I}(\sup\{\|u^{(i)}-\bar{x}\|^{2}:n\in\mathbb{N}\})<\infty$.  Therefore, combined with (\ref{eq:bb14}) we obtain 
\begin{eqnarray}  
\|w^{(i+1)}_{n}-\bar{x}\|^{2}\nonumber&\leq&\|x_{n}-\bar{x}\|^{2}+\theta_{n}D_{1}+2\lambda_{n}(f_{i}(\bar{x})-f_{i}(y^{(i)}_{n}))+2\lambda_{n}\langle  d^{(i)}_{n+1},y^{(i)}_{n}-\bar{x}\rangle\nonumber\\&&+\alpha_{n}D_{2}-(1-\alpha_{n})\|T_{i}(y^{(i)}_{n})-y^{(i)}_{n}\|^{2}-\|z_{n}-y^{(i)}_{n}\|^{2}\nonumber
\end{eqnarray}
for each $i\in I$.
\end{proof}
\subsection*{APPENDIX B.3. Proof of Lemma~\ref{L6}}
\begin{proof}
From the given assumptions on $f_{i}$, there exists $q^{(i)}\in\partial f_{i}(\bar{x})$ and a non-negative real number $D_{3}$ such that
$2(f_{i}(\bar{x})-f_{i}(y^{(i)}_{n}))\leq 2\|\bar{x}-f_{i}(y^{(i)}_{n})\|\|q^{(i)}\|\leq D_{3}.$ Hence, we have from Lemma \ref{L5} $(b)$ that for every $i\in I$
\begin{eqnarray}\label{eq:bb177}
\|w^{(i+1)}_{n}-\bar{x}\|^{2}\nonumber&\leq&\|x_{n}-\bar{x}\|^{2}+\theta_{n}D_{1}+\alpha_{n}D_{2}+\lambda_{n}(D_{3}+D_{4})\nonumber\\&&-(1-\alpha_{n})\|T_{i}(y^{(i)}_{n})-y^{(i)}_{n}\|^{2}-\|z_{n}-y^{(i)}_{n}\|^{2},
\end{eqnarray}
where $D_{4}=\max_{i\in I}(\sup\{2\langle  d^{(i)}_{n+1},y^{(i)}_{n}-\bar{x}\rangle:n\in\mathbb{N}\})<\infty$. Thus, (\ref{eq:bb177}) gives
\begin{eqnarray}
\|x_{n+1}-\bar{x}\|^{2}\nonumber&=&\Big\|\frac{1}{M}\sum\limits_{i=1}^{M} w^{(i+1)}_{n}-\frac{1}{M}\sum\limits_{i=1}^{M}\bar{x}\Big\|^{2}\leq\frac{1}{M}\sum\limits_{i=1}^{M}\|w^{(i+1)}_{n}-\bar{x}\|^{2}\nonumber\\&\leq&\|x_{n}-\bar{x}\|^{2}+\theta_{n}D_{1}+\alpha_{n}D_{2}+\lambda_{n}D_{5}\nonumber\\&&-(1-\alpha_{n})\frac{1}{M}\sum\limits_{i=1}^{M}\|T_{i}(y^{(i)}_{n})-y^{(i)}_{n}\|^{2}-\frac{1}{M}\sum\limits_{i=1}^{M}\|z_{n}-y^{(i)}_{n}\|^{2},\nonumber
\end{eqnarray}
where $D_{5}=D_{3}+D_{4}$. This leads to
$(1-\alpha_{n})\frac{1}{M}\sum_{i=1}^{M}\|T_{i}(y^{(i)}_{n})-y^{(i)}_{n}\|^{2}+\frac{1}{M}\sum_{i=1}^{M}\|z_{n}-y^{(i)}_{n}\|^{2}\leq\|x_{n}-x_{n+1}\|D_{6}+\theta_{n}D_{1}+\alpha_{n}D_{2}+\lambda_{n}D_{5},$
where  
$D_{6}=\sup\{\|x_{n}-\bar{x}\|+\|x_{n+1}-\bar{x}\|:n\in\mathbb{N}\}<\infty$, which implies in view of Lemma \ref{L5} $(a)$, Condition \ref{condition1} and
$\|T_{i}(y^{(i)}_{n})-z_{n}\|\leq\|T_{i}(y^{(i)}_{n})-y^{(i)}_{n}\|+\|z_{n}-y^{(i)}_{n}\|$ that
\begin{eqnarray}\label{eq:bb20}
\lim\limits_{n\rightarrow\infty}\|T_{i}(y^{(i)}_{n})-y^{(i)}_{n}\|=\lim\limits_{n\rightarrow\infty}\|z_{n}-y^{(i)}_{n}\|=\lim\limits_{n\rightarrow\infty}\|T_{i}(y^{(i)}_{n})-z_{n}\|=0.
\end{eqnarray}
The definition of $w_{n}^{(i+1)}$ and $z_{n}$ gives  $\|w_{n}^{(i+1)}-T_{i}(y^{(i)}_{n})\|=\alpha_{n}\|u^{(i)}-T_{i}(y^{(i)}_{n})\|$  and  $\|x_{n}-z_{n}\|=\theta_{n}\|x_{n}-x_{n-1}\|.$ Since the sequences $\{u^{(i)}-T_{i}(y^{(i)}_{n})\}$ ($i\in I$) and $\{x_{n}-x_{n-1}\}$ are bounded and $ \lim_{n\rightarrow\infty}\alpha_{n}=\lim_{n\rightarrow\infty}\theta_{n}= 0$, we have
$
\lim_{n\rightarrow\infty}\|w_{n}^{(i+1)}-T_{i}(y^{(i)}_{n})\|= \lim_{n\rightarrow\infty}\|x_{n}-z_{n}\|=0$, and so it follows together with (\ref{eq:bb20}) that
\begin{eqnarray}\label{eq:bb22}
\lim\limits_{n\rightarrow\infty}\|w^{(i+1)}_{n}-y_{n}^{(i)}\|=\lim\limits_{n\rightarrow\infty}\|w^{(i+1)}_{n}-z_{n}\|=\lim\limits_{n\rightarrow\infty}\|x_{n}-y^{(i)}_{n}\|=0.
\end{eqnarray}
Therefore, the inequality    
$\|x_{n}-T_{i}(x_{n})\|=\|x_{n}-y^{(i)}_{n}\|+\|y^{(i)}_{n}-T_{i}(y^{(i)}_{n})\|+\|T_{i}(x_{n})-T_{i}(y^{(i)}_{n})\|\leq2\|x_{n}-y^{(i)}_{n}\|+\|y^{(i)}_{n}-T_{i}(y^{(i)}_{n})\|$ together with (\ref{eq:bb20}) and (\ref{eq:bb22}) gives	
$\lim_{n\rightarrow\infty}\|x_{n}-T_{i}(x_{n})\|=0
$ for each $i\in I$.
\end{proof}

%\bibliography{sn-bibliography}% common bib file
%% if required, the content of .bbl file can be included here once bbl is generated
%%\input sn-article.bbl

\end{document}